\documentclass{article}
\usepackage[english]{babel}
\usepackage{amsrefs,amssymb,amsmath,amsthm,amsfonts,epsfig,graphicx,psfrag}
\usepackage{hyperref}
\hypersetup{
    colorlinks,
    citecolor=black,
    filecolor=black,
    linkcolor=black,
    urlcolor=black
}
\usepackage[all,cmtip]{xy}
\usepackage[utf8]{inputenc}
\DeclareMathOperator{\cov}{cov}
\newcommand{\lideal} {\lambda_{\rm{ideal}}}

\newcommand{\U} {\mathcal U}

\DeclareMathOperator{\ent}{h}
\DeclareMathOperator{\entf}{ent}
\DeclareMathOperator{\ucap}{cap}
\DeclareMathOperator{\card}{card}

\theoremstyle{plain}
\newtheorem{thm}{Theorem}[section]
\newtheorem{cor}[thm]{Corollary}
\newtheorem{lem}[thm]{Lemma}
\newtheorem{prop}[thm]{Proposition}

\theoremstyle{definition}
\newtheorem{df}[thm]{Definition}
\newtheorem{ex}[thm]{Example}
\newtheorem{rmk}[thm]{Remark}

\theoremstyle{remark}

\DeclareMathOperator{\dist}{d}

\DeclareMathOperator{\diam}{diam}

\DeclareMathOperator{\inte}{int}

\newcommand{\azul}[1]{\textcolor{black}{#1}}

\newcommand{\R}{\mathbb R}

\newcommand{\Z}{\mathbb Z}
\newcommand{\N}{\mathbb N}
\renewcommand{\epsilon}{\varepsilon}
\newcommand{\epsilona}{{\epsilon_0}}
\newcommand{\epsilonb}{{\epsilon_1}}
\newcommand{\epsilonc}{{\epsilon_2}}

\newcommand{\expc}{\xi}

\begin{document}

\author{Alfonso Artigue\footnote{Email: artigue@unorte.edu.uy. Adress: Departamento de Matemática y Estadística del Litoral, Universidad de la Rep\'ublica, Gral. Rivera 1350, Salto, Uruguay.}}
% \address{Imerl, Facultad de Ingenier\'\i a, Julio Herrera y Reissig 565, Montevideo, Uruguay}
% \email{aartigue@fing.edu.uy}
% \title{Conformal Metrics for Expansive Systems}
\title{Self-Similar Hyperbolicity}
% \title{Hyperbolicity is self-similar}
\date{\today}
\maketitle

\begin{abstract}
In this paper we consider expansive homeomorphisms of compact spaces 
with a hyperbolic metric presenting a self-similar behavior on stable and unstable sets. 
Several application are given related to Hausdorff dimension, entropy, intrinsically ergodic measures 
and the transitivity of expansive homeomorphisms with canonical coordinates.
% In this paper we show that every expansive homeomorphism of a compact metric space 
% admits a compatible hyperbolic metric with a self-similar behavior on stable and unstable sets. 
% % The power of this metric lies in its simplicity. 
% Several application are given related to Hausdorff dimension, entropy, intrinsically ergodic measures 
% and the transitivity of expansive homeomorphisms with canonical coordinates.
% % Applications are given in relation to topological entropy, Hausdorff dimension, 
% % capacity and the construction of the intrinsic measure assuming canonical coordinates. 
\end{abstract}
% \tableofcontents
\section{Introduction} 

% Given a metric space $(M,\dist)$ we say that a homeomorphism $f\colon M\to M$ 
% is \emph{self-similar hyperbolic} if there are 
% $\expc>0$ and $\lambda>1$ 
% such that if $\dist(x,y)<\expc$
% then 
% \[
%  \max_{|i|=1}\dist(f^i(x),f^i(y))=\lambda\dist(x,y).
% \]
% It is clear that every self-similar hyperbolic homeomorphism 
% is \emph{expansive}, i.e., there is $\delta>0$ such that if $x,y\in M$ and $x\neq y$ then 
% $\dist(f^n(x),f^n(y))>\delta$ for some $n\in\Z$. 
% there is a compatible metric making $f$ self-similar hyperbolic.

A homeomorphism $f\colon M\to M$ of a compact metric space $(M,\rho)$ 
is \emph{expansive} if there is $\expc>0$ such that 
if $x, y\in M$ are different then $\rho(f^n(x),f^n(y))>\expc$ for some $n\in\Z$. 
Examples of such dynamics are hyperbolic sets, in particular Anosov diffeomorphisms and 
basic sets of axiom A diffeomorphisms. 
By definition, expansivity is independent from hyperbolicity and smooth structures, 
it is a topological concept.
However, several authors constructed special hyperbolic metrics for this kind of dynamics, 
see for example \cites{CoRe,Reddy82,Reddy83,Fried,Sakai95,Fa,Dov,FKM}. 
In this paper we will study expansive homeomorphisms with a self-similar hyperbolic metric.

To motivate our first result we recall that
in \cite{Fa} Fathi constructed a compatible metric, that we will denote by $\dist_F$, 
for which $f$ is a Lipschitz isomorphism and 
there are 
$\expc>0$ and $\lambda>1$ such that 
$\max \{\dist_F(f(x),f(y)),\dist_F(f^{-1}(x),f^{-1}(y))\}\geq \lambda\dist_F(x,y)$
if $\dist_F(x,y)\leq\expc$. 
A metric as $\dist_F$ is called \emph{adapted} or \emph{Lyapunov hyperbolic}.
% It is easy to see that (\ref{eqAdHyMe}) implies 
% (\ref{eqReddy}). A remarkable property of Fathi's metric is that it 
% controls the behavior of every pair of close points, not only 
% points on a common local stable set. 
In \cite{Dov} Dovbish
obtained a hyperbolic metric $\dist_D$ 
with an asymptotic homothetic behavior on local stable and unstable sets. 
That is, there are two constants $0<\lambda_s,\lambda_u<1$ such that
$\dist_D(f(x),f(y))$ approximates $\lambda_s\dist_D(x,y)$ 
for $y\in W^s_\epsilon(x)$ as $\epsilon\to 0$; 
with analogous estimate on local unstable sets.
% \[
%  \lim_{\dist_D(x,y)\to 0}\frac{\dist_D(f(x),f(y))}{\dist_D(x,y)}= \lambda_s
% \]
% uniformly over all $x,y$ such that $y\in W^s_\epsilon(x)$ 
% and, analogously, 
% \[
%  \lim_{\dist_D(x,y)\to 0}\frac{\dist_D(f^{-1}(x),f^{-1}(y))}{\dist_D(x,y)}= \lambda_u
% \]
% uniformly over all $x,y$ such that $y\in W^u_\epsilon(x)$. 
We recall that
for $\epsilon>0$ the \emph{local stable set} of $x\in M$ is 
\[
 W^s_\epsilon(x)=\{y\in M: \dist(f^n(x),f^n(y))\leq \epsilon\text{ for all }n\geq 0\}.
\]
The \emph{local unstable set} is defined as 
\[
 W^u_\epsilon(x)=\{y\in M: \dist(f^{-n}(x),f^{-n}(y))\leq \epsilon\text{ for all }n\geq 0\}.
\]
% It is interesting that the parameters $\lambda_s$ and $\lambda_u$ have certain independent liberty, 
% as explained in \cite{Dov}.

Our first result, Theorem \ref{teoPerfHyp}, states that for every expansive homeomorphism 
of a compact metric space there are a compatible metric $\dist$ and two constants $\expc>0$ and $\lambda>1$ such that 
if $\dist(x,y)<\expc$ then 
\[
  \max \{\dist(f(x),f(y)),\dist(f^{-1}(x),f^{-1}(y))\}= \lambda\dist(x,y).
\]
A hyperbolic metric with this property will be called \emph{self-similar}
and we say that $\expc$ is an \emph{expansive constant} and 
that $\lambda$ is an \emph{expanding factor} of the metric.
Obviously, Dovbysh's conditions hold for a self-similar metric without taking limits. 
The construction of a self-similar hyperbolic metric that we present follows standard techniques. 
In fact, it is Fathi's metric with only a small variation that is explained in Remark \ref{rmkConstFathi}.

In \cite{FKM}*{Problem 2.6} Fujita, Kato and Matsumoto ask: \emph{Do Positively expansive maps expand strictly small distances?}
In our terminology they ask about the existence of a self-similar hyperbolic metric 
for a positively expansive map.
A continuous map $f\colon M\to M$ of the compact metric space $(M,\rho)$ 
is \emph{positively expansive} if there is $\delta>0$ such that 
if $x\neq y$ then $\rho(f^n(x),f^n(y))>\delta$ for some $n\geq 0$.
They give a positive answer in the case of open positively expansive maps and 
for positively expansive maps of graphs. 
In Theorem \ref{thmSSPosExp} we give a positive answer with full generality, its proof is only sketched because it is analogous to the case of expansive 
homeomorphisms.

The rest of the paper is devoted to explore the consequences of the 
self-similarity of a hyperbolic metric. Let us describe the content of this paper while stating more results that we obtained. 
In \S \ref{secExpSelfSim}, besides proving Theorems \ref{teoPerfHyp} and \ref{thmSSPosExp}, 
examples are given and basic properties of these metrics are investigated. 

In \S \ref{secTopEnt}, Theorem \ref{teoEntCap}, 
we prove the equation 
\[
 \ucap(M,\dist)=\frac{\entf(f)}{\log(\lambda)}
\]
relating the capacity of the space, the entropy of the homeomorphism and the expanding factor of 
a self-similar metric. 
This is a fundamental equation of self-similar hyperbolicity that holds for every expansive homeomorphism of a compact metric space with a self-similar metric.
It was previously proved in \cite{FKM} for positively expansive maps.
For a hyperbolic metric not being self-similar only an inequality can be proved, see \cite{Fa}.
With this result we study the set of expanding factors.
We define the \emph{ideal expanding factor} as $\lideal=e^{\ent(f)/\dim(M)}$, where $\dim$ stands for topological dimension.
In Theorem \ref{thmIdeFactTrans}
we show that if $f\colon M\to M$ is an expansive axiom A diffeomorphism of a compact connected manifold 
with self-similar metric $\dist$ with ideal expanding factor on the non-wandering set then 
$f$ is a transitive Anosov diffeomorphism and the dimension of stable and unstable manifolds coincide.

In \S \ref{secHolonomy}
we show that our metric at small scales looks like a \emph{max norm} with respect to canonical coordinates. 
To explain the meaning of this statement let us recall that $f$ has \emph{canonical coordinates}\footnote{In the literature an expansive 
homeomorphism with canonical coordinates may be called \emph{topological Anosov} \cite{AH}, \emph{Smale space} \cite{Ruelle}, 
\emph{hyperbolic homeomorphism} \cite{Mane} and is equivalent to expansivity with the \emph{pseudo-orbit trancing property} 
or \emph{local product structure}.}
if for each $\epsilon>0$ there is $\delta>0$ such that 
$\dist(x,y)<\delta$ implies $W^s_\epsilon(x)\cap W^u_\epsilon(y)\neq\emptyset$. 
If in addition $2\epsilon$ is an expansive constant of $f$ then $W^s_\epsilon(x)\cap W^u_\epsilon(y)$ is a singleton 
and we can define a map $[\cdot,\cdot]$ by
\begin{equation}
\label{ecuCorchete}
 W^s_\epsilon(x)\cap W^u_\epsilon(y)=\{[x,y]\}
\end{equation}
whenever $\dist(x,y)<\delta$.
In Theorem \ref{teoCajaPIso} we show that for all $\epsilon>0$ there is $\delta>0$ 
such that if $0<\dist(x,y)<\delta$ 
then 
\[
 \left|\frac{\max\{\dist(x,[x,y]),\dist([x,y],y)\}}{\dist(x,y)}-1\right|<\epsilon.
\]
This result is related to \cite{Dov}*{Theorem 1.2'}.
In Theorem \ref{thmHoloIsoTrans}
we show that if $f$ is an expansive homeomorphism with canonical coordinates 
of a Peano continuum $M$ and holonomies are isometries then $f$ is transitive. 

In \S \ref{secIntMeas} we give an application to the ergodic theory of Anosov diffeomorphisms.
Using a self-similar hyperbolic metric we give a natural construction 
of the intrinsic measure (the probability measure with maximal entropy), 
also called \emph{Bowen-Margulis measure} \cite{HaKa}, of a topologically mixing 
expansive homeomorphism with canonical coordinates. 
This measure is obtained as a local product measure of Hausdorff measures on local stable and unstable sets 
with respect to a self-similar metric.
Analogous constructions can be found in \cites{Ham,Marg,Sinai}. 

I thank Mauricio Achigar, Federico Dalmao, Damián Ferraro, Ignacio Monteverde, Rafael Potrie, Armando Treibich and José Vieitez 
for useful conversations during the preparation of this work. 
I thank the referee for several corrections and suggestions.

\section{Expansivity and self-similarity}
\label{secExpSelfSim}
Let $(M,\rho)$ be a compact metric space and consider $f\colon M\to M$ a 
homeomorphism. 
We say that $f$ is \emph{expansive} if there is $\expc>0$ such that if $x\neq y$ then 
$\rho(f^n(x),f^n(y))>\expc$ for some $n\in\Z$. 

\begin{df}
\label{dfSelfSim}
Given a homeomorphism $f\colon M\to M$ we say that a compatible metric $\dist$ on $M$ 
is \emph{self-similar} if there are
constants $\expc>0,\lambda>1$ such that 
 if $\dist(p,q)\leq\expc$ then 
 \begin{equation}
  \label{ecuConforme}
    \max_{|i|=1}\dist(f^i(p),f^i(q))=\lambda\dist(p,q).
 \end{equation}
In this case we say that $\expc$ is an \emph{expansive constant} and $\lambda$ is the \emph{expanding factor} of the metric.
\end{df}

\subsection{Self-similar metrics for expansive homeomorphisms}

In this section we will construct a compatible self-similar metric for an arbitrary expansive homeomorphism 
of a compact metric space. 

\begin{rmk}
\label{rmkConstFathi}
 A self-similar metric could be obtained following the proof of \cite{Fa}*{Theorem 5.1}. 
 In doing so, we would have to change \cite{Fa}*{Equation (16)}\footnote{For reader's convenience we point that 
 Equation (16) is between Equations (5) and (7), 
 see \cite{Fa}*{p. 259}.} 
 with a $\sup$ in $n\in\Z$ instead of a bounded interval. 
 However, to simplify the proof of Theorem \ref{teoPerfHyp}
 we will start assuming Fathi's metric. 
\end{rmk}

As usual, the diameter of a set with respect to a metric $\rho$ is defined as 
\[
 \diam_{\rho}(A)=\sup_{x,y\in A}\rho(x,y)
\]
for all $A\subset M$.

\begin{thm}
\label{teoPerfHyp}
Every expansive homeomorphism $f\colon M\to M$ on a compact metric space 
admits a self-similar metric. 
\end{thm}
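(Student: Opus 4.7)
The plan is to adopt Fathi's construction directly, modified as Remark \ref{rmkConstFathi} suggests so that the orbit-wise supremum ranges over all of $\Z$ instead of a bounded interval. Concretely, I would take as input Fathi's adapted metric $\dist_F$ with some expansive constant $\expc_F$ and factor $\lambda>1$ satisfying $\max\{\dist_F(f(x),f(y)),\dist_F(f^{-1}(x),f^{-1}(y))\}\geq\lambda\,\dist_F(x,y)$ whenever $\dist_F(x,y)\leq\expc_F$. Fix a small $\delta\in(0,\expc_F)$, put $\phi(x,y)=\min(\dist_F(x,y),\delta)$ (still a metric, since truncation preserves the triangle inequality), and define
\[
 \dist(x,y)=\sup_{n\in\Z}\lambda^{-|n|}\phi(f^n(x),f^n(y)).
\]

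The first thing to check is that $\dist$ is a compatible metric bounded by $\delta$. The triangle inequality passes termwise from $\phi$ through the supremum, and separation for $x\neq y$ comes from expansivity: some iterate $n$ has $\phi(f^n(x),f^n(y))>0$. For topological compatibility, the tails $|n|>N$ are uniformly dominated by $\delta\lambda^{-N}$, so $\dist$ is the uniform limit in $(x,y)$ of the continuous finite maxima; conversely $\dist\geq\phi$ shows that convergence in $\dist$ implies convergence in $\dist_F$, hence in the original topology.

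Next, the upper bound $\max\{\dist(f(x),f(y)),\dist(f^{-1}(x),f^{-1}(y))\}\leq\lambda\,\dist(x,y)$ is automatic and holds globally. Shifting the index by $\pm 1$ in the defining supremum and using the elementary reverse triangle inequality $|m\pm1|\geq|m|-1$ gives the termwise bound $\lambda^{-|m\mp1|}\phi(f^m(x),f^m(y))\leq\lambda\cdot\lambda^{-|m|}\phi(f^m(x),f^m(y))$, which transfers to the suprema.

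For the matching lower bound I would set $\expc=\delta/\lambda$ and take $x\neq y$ with $\dist(x,y)\leq\expc$. Since $\dist\geq\phi$, this forces $\dist_F(x,y)\leq\delta/\lambda$, so $\phi(x,y)=\dist_F(x,y)$. Fathi's inequality at $(x,y)$ produces some $i\in\{-1,+1\}$ with $\dist_F(f^i(x),f^i(y))\geq\lambda\,\dist_F(x,y)$; since this quantity is at most $\delta$, we get $\phi(f^i(x),f^i(y))\geq\lambda\phi(x,y)$, equivalently $\lambda^{-|i|}\phi(f^i(x),f^i(y))\geq\phi(x,y)$. Because $\lambda^{-|n|}\phi(f^n(x),f^n(y))\to0$ as $|n|\to\infty$, the defining supremum of $\dist(x,y)$ is actually attained, and the previous inequality guarantees that an attainer $m_0\neq0$ can be chosen. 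Substituting $m=m_0$ into the index-shifted supremum for whichever of $\dist(f(x),f(y))$ or $\dist(f^{-1}(x),f^{-1}(y))$ matches the sign of $m_0$ then gives exactly $\lambda\,\dist(x,y)$, completing self-similarity. The main obstacle is precisely this attainment argument: ensuring that the supremum can be realized off of $n=0$ is the one place Fathi's adapted estimate is actually used, and everything else is formal bookkeeping with suprema of metric pullbacks.
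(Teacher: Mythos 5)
Your construction is correct and is essentially the paper's own (it is exactly what Remark \ref{rmkConstFathi} anticipates): start from Fathi's adapted metric and pass to $\sup_{n\in\Z}\lambda^{-|n|}\dist_F(f^n(x),f^n(y))$, with Fathi's inequality at the pair $(x,y)$ used precisely to ensure the index-$0$ term never strictly dominates the index-$\pm1$ terms. The differences are in execution, and both work. By truncating $\dist_F$ at $\delta$ you make the tail bound $\delta\lambda^{-N}$ uniform, so compatibility follows from uniform convergence of the partial maxima; the paper keeps $\dist_F$ untruncated and instead uses the Lipschitz constant $k$ of $f^{\pm1}$ for $\dist_F$ to obtain the estimate (\ref{ecuHolderddF}), which is more laborious but yields the H\"older equivalence $\dist_F\leq\dist\leq c\,\dist_F^{\alpha}$ recorded in the remark following the proof -- your truncated version gives compatibility without that byproduct. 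For the self-similar identity, the paper proves the equality of $\max_{i}\dist_F(f^i(x),f^i(y))/\lambda^{|i\pm1|}$ with $\max_i\dist_F(f^i(x),f^i(y))/\lambda^{|i|-1}$ by comparing exponents, while you prove the upper bound termwise and the lower bound by exhibiting a maximizer $m_0\neq 0$; this is the same mechanism in different clothing. One phrase of yours needs repair: ``since this quantity is at most $\delta$'' is not justified, as $\dist_F(f^{\pm1}(x),f^{\pm1}(y))$ may well exceed $\delta$; however, the inequality you actually need, $\phi(f^i(x),f^i(y))\geq\lambda\phi(x,y)$, holds regardless, since either $\dist_F(f^i(x),f^i(y))\leq\delta$ and both sides are untruncated, or $\phi(f^i(x),f^i(y))=\delta\geq\lambda\phi(x,y)$ because $\phi(x,y)\leq\delta/\lambda$.
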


\begin{proof}
We start considering from \cite{Fa}*{Theorem 5.1} an adapted hyperbolic metric $\dist_F$ 
making $f$ and $f^{-1}$ Lipschitz. That is, 
there are $\expc_F>0$ and $k\geq\lambda>1$
such that
$$k\dist_F(x,y)\geq\max_{|i|=1}\dist_F(f^i(x),f^i(y))$$
for all $x,y\in M$ 
and 
\[
 \max_{|i|=1} \dist_F(f^i(x),f^i(y))\geq \lambda\dist_F(x,y)
\]
if $\dist_F(x,y)\leq\expc_F$. 
Consider the metric $\dist\colon M\times M\to\R$ defined as
\begin{equation}
 \label{ecuDefSS}
 \dist(x,y)=\max_{i\in\Z}\frac{\dist_F(f^i(x),f^i(y))}{\lambda^{|i|}}.
\end{equation}
Since $M$ is compact the distances are bounded, which implies that (\ref{ecuDefSS})
is a maximum and
$\dist$ is a metric.
Note that $\dist\geq\dist_F$.
To prove that the metrics $\dist_F$ and $\dist$ define the same topology on $M$ 
it only remains to show that for all $\epsilon>0$ there is $\delta>0$ such that if 
$\dist_F(x,y)<\delta$ then $\dist(x,y)<\epsilon$.
Consider two different points $x,y\in M$ 
and take an integer $j=j(x,y)\geq 0$ such that 
\begin{equation}
 \label{ecuCambioj}
 k^{j-1}<\frac{\diam_{\dist_F}(M)}{\dist_F(x,y)}\leq k^{j}.
\end{equation}
Since 
\[
 \dist_F(f^i(x),f^i(y))\leq \min\{k^{|i|}\dist_F(x,y),\diam_{\dist_F}(M)\}
\]
for all $i\in\Z$, we have that 
\begin{equation}
\label{ecuAcota}
\frac{\dist_F(f^i(x),f^i(y))}{\lambda^{|i|}}\leq \min\left\{\left(\frac{k}{\lambda}\right)^{|i|}\dist_F(x,y),\frac{\diam_{\dist_F}(M)}{\lambda^{|i|}}\right\}
\end{equation}
for all $i\in \Z$. 
% Without loss of generality we assume that $k>\lambda$.
Applying (\ref{ecuCambioj}), if $|i|\leq j-1$ then 
\[
 \left(\frac{k}{\lambda}\right)^{|i|}\dist_F(x,y)\leq\left(\frac{k}{\lambda}\right)^{j-1}\dist_F(x,y)\leq
 \frac{\diam_{\dist_F}(M)}{\lambda^{j-1}}.
\]
For $|i|\geq j$ it holds that $\diam_{\dist_F}(M)/\lambda^{|i|}\leq\diam_{\dist_F}(M)/\lambda^j$. 
Then
\[
 \min\{(k/\lambda)^{|i|}\dist_F(x,y),\diam_{\dist_F}(M)/\lambda^{|i|}\}\leq\diam_{\dist_F}(M)/\lambda^{j-1}
\]
for all $i\in\Z$.
Applying (\ref{ecuAcota}) we obtain
\begin{equation}
 \label{ecuHolderddF}
  \dist(x,y)=\max_{i\in\Z}\frac{\dist_F(f^i(x),f^i(y))}{\lambda^{|i|}}\leq \frac{k\diam_{\dist_F}(M)}{\lambda^{j-1}}.
\end{equation}
Since $j(x,y)\to+\infty$ as $\dist_F(x,y)\to 0$ the metrics $\dist$ and $\dist_F$ are compatible.

To prove (\ref{ecuConforme}) 
take $\expc>0$ such that if $\dist(x,y)<\expc$ then $\dist_F(x,y)<\expc_F$. 
% % % metric $\dist$ satisfies: if $\dist(x,y)<\expc$ then 
% % %  \[
% % %   \max\{\dist(f(x),f(y)),\dist(f^{-1}(x),f^{-1}(y))\}=\lambda\dist(x,y).
% % %  \]
We have that 
\[
\begin{array}{ll}
 \max\dist(f^{\pm 1}(x),f^{\pm 1}(y))& =
  \displaystyle\max_{i\in\Z}\frac{\dist_F(f^{i\pm 1}(x),f^{i\pm 1}(y))}{\lambda^{|i|}}\\
  & = \displaystyle\max_{i\in\Z}\frac{\dist_F(f^i(x),f^i(y))}{\lambda^{|i\pm 1|}}\\ 
\end{array}
\]
Since $\min\{|i+1|,|i-1|\}\geq|i|-1$ for all $i\in\Z$ we conclude
\[
 \max_{i\in\Z}\frac{\dist_F(f^i(x),f^i(y))}{\lambda^{|i\pm 1|}}
 \leq\max_{i\in\Z}\frac{\dist_F(f^i(x),f^i(y))}{\lambda^{|i|-1}}
\]
Notice that the difference between $\min |i\pm 1|$ and $|i|-1$, for an integer $i$, is only at $i=0$, 
where $\min |0\pm 1|=1$ and $|0|-1=-1$.
We know that if $\dist(x,y)<\expc$ then 
$\dist_F(x,y)<\expc_F$ and consequently
 \[
  \max \dist_F(f^{\pm1}(x),f^{\pm1}(y))\geq\lambda\dist_F(x,y).
 \]
Then
\[
 \max_{i\in\Z}\frac{\dist_F(f^i(x),f^i(y))}{\lambda^{|i\pm 1|}}\geq 
 \max_{|i|=1}\frac{\dist_F(f^i(x),f^i(y))}{\lambda^{|i\pm 1|}}\geq\lambda\dist_F(x,y).
\]
This proves that 
\[
 \max_{i\in\Z}\frac{\dist_F(f^i(x),f^i(y))}{\lambda^{|i\pm 1|}}
 =\max_{i\in\Z}\frac{\dist_F(f^i(x),f^i(y))}{\lambda^{|i|-1}}
\]
Finally 
\[
 \max_{i\in\Z}\frac{\dist_F(f^i(x),f^i(y))}{\lambda^{|i|-1}}=
 \lambda\max_{i\in\Z}\frac{\dist_F(f^i(x),f^i(y))}{\lambda^{|i|}}=\lambda\dist(x,y)
\]
which proves the result.
\end{proof}

% \begin{rmk}
% Our construction is analogous to \cite{Dov}.
% Given a hyperbolic metric $\rho$ with a (suitably chosen) expanding factor $\lambda>1$ Dovbysh defines 
% \[
%  \rho_u(x,y)=\sum_{k=-\infty}^{+\infty}\rho(f^k(x),f^k(y))\lambda^{-k}
% \]
% for $y\in W^u_\delta(x)$. See \cite{Dov}*{Equation (2.1)}. 
% As in (\ref{ecuDefSS}), the metric $\rho_u$ is defined through a series.
% Dovbysh obtained a self-similarity condition for $\rho_u$. 
% The self-similarity in \cite{Dov} is lost in the construction of 
% the metric in $M$. 
% \end{rmk}

\begin{rmk}
From Equation (\ref{ecuCambioj}) in the proof of Theorem \ref{teoPerfHyp}
we have that 
\[
 \left(\frac{\diam_{\dist_F}(M)}{\dist_F(x,y)}\right)^{\log_k(\lambda)}\leq \lambda^j.
\]
% To simplify the notation assume that $\diam_{\dist_F}(M)=1$. 
Applying (\ref{ecuHolderddF}) we conclude 
\[
 \dist_F(x,y)\leq\dist(x,y)\leq c[\dist_F(x,y)]^\alpha
\]
with $\alpha=\log_k(\lambda)\in(0,1)$ and $c>0$. 
That is, $\dist$ and $\dist_F$ are Hölder equivalent, as anticipated by Fried in \cite{Fried}*{Lemma 2}.
% See \cite{Fried}*{Lemma 2}.
\end{rmk}

Let us give some examples with an explicit self-similar metric.

\begin{ex}[Shifts and subshifts]
\label{exSSMShift}
 Let $N^\Z$ be the space of sequences on $N$ symbols $\{0,1,2,\dots,N-1\}$. 
 For $a,b\colon\Z\to N$ define 
 $$T(a,b)=\max\{n\geq 0:a(i)=b(i)\text{ if }|i|\leq n\}.$$
 Given $\lambda>1$ define $\dist(a,b)=\lambda^{-T(a,b)}$. 
 It is easy to see that $\dist$ is self-similar with respect to the shift homeomorphism $\sigma\colon N^\Z\to N^\Z$ ($\sigma (a)_n=a_{n+1}$). 
 The expanding factor is $\lambda$. 
 If $X\subset N^\Z$ is a closed $\sigma$-invariant subset (a \emph{subshift}) 
 then the restricted metric is self-similar.
\end{ex}

\begin{ex}[Expansive homeomorphisms of surfaces]
\label{exSSSurfaces}
On compact surfaces we know that expansive homeomorphisms are conjugate to pseudo-Anosov diffeomorphisms, see \cites{Hi,L}. 
A pseudo-Anosov diffeomorphism $f\colon M\to M$ of a compact surface, by definition, 
has two invariant singular foliations with transverse measures $\mu_s,\mu_u$ that are expanded and contracted by a factor $\lambda>1$. 
To define a self-similar metric, for $p,q\in M$ consider the set $C^{su}(p,q)$ of 
curves $\alpha\colon[0,1]\to M$ from $p$ to $q$ such that 
there are $t=0=0< t_1<\dots<t_n=1$ such that each $\alpha_i=\alpha([t_i,t_{i+1}])$ 
is contained in a stable or an unstable leaf of the foliations. 
Let $l_s(\alpha)$ be the sum of the $\mu_s$-measures of the arcs $\alpha_i$ contained in a stable leaf.
Analogously, define $l_u(\alpha)$. 
Finally consider
\[
 \dist(p,q)=\inf_{\alpha\in C^{su}(p,q)} \max\{l_s(\alpha),l_u(\alpha)\}.
\]
It is easy to prove that $\dist$ is a compatible self-similar metric with expanding 
factor $\lambda$, where $\lambda$ is the expanding factor of the transverse measures.
% PARECERÍA QUE ES LA UNICA MÉTRICA SELF-SIMILAR (LOCALMENTE Y A MENOS DE FACTORES)...
\end{ex}

We remark that Theorem \ref{teoPerfHyp} can be applied to every expansive homeomorphism of a compact metric space, 
in particular to Anosov diffeomorphisms of compact smooth manifolds.
Under certain conditions a self-similar can be derived from a Riemannian metric.

\begin{ex}[Linear Anosov diffeomorphisms]
\label{exSomeAnosov}
Let $T\colon\R^n\to\R^n$ be a linear isomorphism inducing an Anosov automorphism $f$ of the torus $T^n$.
Assume that the stable subspace $E^s$ and the unstable subspace $E^u$ 
of $\R^n$ can be writen as $E^s=E^s_1\oplus\dots\oplus E^s_k$ and 
$E^u=E^u_1\oplus\dots\oplus E^u_l$ 
and there are real numbers $0<|a_1|,\dots,|a_k|<1$, $|b_1|,\dots,|b_l|>1$ such that 
$T(v)=a_iv$ for all $v\in E^s_i$ and 
$T(v)=b_jv$ for all $v\in E^u_j$.
Let $\|\cdot\|$ be a norm in $\R^n$.
Given $v\in\R^n$ consider $p\in E^s$ and $q\in E^u$ such that $v=p+q$ 
and take $p_i\in E^s_i$ and $q_j\in E^u_j$ such that 
$p=p_1+\dots+p_k$ and $q=q_1+\dots+q_l$. 
For 
$$\lambda=\min\{|a_i|^{-1},|b_j|:1\leq i\leq k,1\leq j\leq l\}$$ 
define 
$$\rho(v)=\max\{\|p_i\|^{\log(\lambda)\log|a_i|},\|q_j\|^{\log(\lambda)/\log|b_j|}:1\leq i\leq k,1\leq j\leq l\}.$$
The metric $\dist(p,q)=\rho(q-p)$ in $\R^n$ induces a self-similar metric on the torus with expanding factor $\lambda$.
\end{ex}

% ACA, SACAR SI EL EJEMPLO NO FUNCA: See Example \ref{exLimInv} for a self-similar metric on inverse limits of positively expansive maps.

\subsection{Positively expansive maps}
In this brief section we indicate how to construct a self-similar metric 
for a positively expansive map.

\begin{thm}
\label{thmSSPosExp}
 If $f\colon M\to M$ is a positively expansive map of a compact metric space 
 then there is a compatible metric $\dist$ on $M$, $\expc>0$ and $\lambda>1$ such that 
 if $\dist(x,y)<\expc$ then $\dist(f(x),f(y))=\lambda\dist(x,y)$.
\end{thm}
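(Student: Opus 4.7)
The plan is to imitate the proof of Theorem \ref{teoPerfHyp}, using only non-negative iterates of $f$. First I would start with an adapted metric $\dist_F$ for the positively expansive map: a compatible metric, with constants $\expc_F>0$ and $k\geq\lambda>1$, such that $\dist_F(f(x),f(y))\leq k\dist_F(x,y)$ for all $x,y\in M$, and $\dist_F(f(x),f(y))\geq\lambda\dist_F(x,y)$ whenever $\dist_F(x,y)\leq\expc_F$. This is the one-sided analogue of Fathi's theorem (see for instance \cite{Reddy82} or \cite{Reddy83}), and can alternatively be produced by running Fathi's construction from the proof of Theorem \ref{teoPerfHyp} with a one-sided sum instead of a two-sided one.

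Next I would define
\[
\dist(x,y)=\max_{i\geq 0}\frac{\dist_F(f^i(x),f^i(y))}{\lambda^i}.
\]
The maximum exists because each term is bounded by $\diam_{\dist_F}(M)/\lambda^i$, which tends to $0$. The same splitting used in the proof of Theorem \ref{teoPerfHyp}, at the index $j=j(x,y)$ satisfying $k^{j-1}<\diam_{\dist_F}(M)/\dist_F(x,y)\leq k^j$, shows (with $|i|$ replaced by $i$) that $\dist\geq\dist_F$ is compatible with the topology and in fact Hölder equivalent to $\dist_F$ with exponent $\log_k\lambda$.

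The self-similarity is then verified by a direct reindexing $j=i+1$:
\[
\dist(f(x),f(y))=\max_{i\geq 0}\frac{\dist_F(f^{i+1}(x),f^{i+1}(y))}{\lambda^i}=\lambda\max_{j\geq 1}\frac{\dist_F(f^j(x),f^j(y))}{\lambda^j}.
\]
Choose $\expc>0$ so small that $\dist(x,y)<\expc$ forces $\dist_F(x,y)<\expc_F$. Then the adapted property yields $\dist_F(f(x),f(y))/\lambda\geq\dist_F(x,y)$, which means the index $j=1$ already dominates the index $j=0$ inside the maximum defining $\dist(x,y)$. Consequently $\max_{j\geq 1}=\max_{j\geq 0}=\dist(x,y)$, and the desired equality $\dist(f(x),f(y))=\lambda\dist(x,y)$ follows.

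The only potentially delicate point is ensuring the existence of the adapted metric $\dist_F$ in this generality, with a uniform expanding factor $\lambda$ and $f$ globally Lipschitz. Once this input is secured, the argument is essentially identical to the expansive homeomorphism case, and in fact strictly simpler: negative iterates play no role, and the maximum over $|i|=1$ collapses to a single term, which is why the conclusion is the clean equality $\dist(f(x),f(y))=\lambda\dist(x,y)$ rather than a $\max$ of two distances.
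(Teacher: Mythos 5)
Your proposal is correct and takes essentially the same route as the paper: the paper also starts from the adapted metric for positively expansive maps (citing \cite{Reddy82}, see also \cite{Sakai03}), defines $\dist(x,y)=\max_{n\geq 0}\dist_R(f^n(x),f^n(y))/\lambda^n$, and notes that the rest is analogous to the proof of Theorem \ref{teoPerfHyp}. Your reindexing argument showing that the $j=1$ term dominates the $j=0$ term correctly fills in the details the paper leaves as ``analogous,'' and is in fact the simpler one-sided version of the same computation.
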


\begin{proof}
By \cite{Reddy82} (see also \cite{Sakai03}) we know that there is a 
compatible metric $\dist_R$, $\expc>0$ and $\lambda>1$ 
such that if 
$\dist_R(x,y)<\expc$ then $\dist_R(f(x),f(y))\geq\lambda\dist_R(x,y)$. 
Also, this metric makes $f$ Lipschitz.
A self-similar metric can be defined by
\[
 \dist(x,y)=\max_{n\geq 0} \frac{\dist_R(f^n(x),f^n(y))}{\lambda^n}.
\]
The rest of the proof is analogous to the proof of Theorem \ref{teoPerfHyp}.
\end{proof}

\subsection{Basic properties of a self-similar metric}

In the following results we investigate 
simple but important properties of 
the self-similar metric. 

\begin{rmk}
 \label{rmkLips}
First note that Equation (\ref{ecuConforme}) implies that 
if $\dist(p,q)\leq\expc$ then 
\[
\begin{array}{l}
 \dist(f(p),f(q))\leq\lambda\dist(p,q)\text{ and }\\
 \dist(f^{-1}(p),f^{-1}(q))\leq\lambda\dist(p,q).
\end{array}
\]
These easily gives us that $f$ and $f^{-1}$ are Lipschitz. 
Moreover, considering $\dist'(p,q)=\min\{\dist(p,q),\expc\}$ we can assume that $\lambda$ itself is a Lipschitz constant for $f$ and $f^{-1}$.
In this case the expansivity constant should be reduced to $\expc'=\expc/\lambda$.
\end{rmk}

\begin{prop}
\label{propExpandeFut}
If $\dist$ is self-similar and $\dist(f(x),f(y))=\lambda\dist(x,y)$ 
then $$\dist(f^{k}(x),f^{k}(y))=\lambda^k\dist(x,y)$$ 
for all $k\geq 0$ such that $\lambda^{k-1}\dist(x,y)\leq\expc$.
\end{prop}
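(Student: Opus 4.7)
The plan is to proceed by induction on $k$, applying the self-similarity equation (\ref{ecuConforme}) at the pair of $(k-1)$-st iterates $(f^{k-1}(x), f^{k-1}(y))$ and then ruling out the backward branch of the maximum by a crude size comparison.

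The base cases $k = 0$ and $k = 1$ are immediate: the first is a trivial identity and the second is exactly the standing hypothesis. For the inductive step, assume $\dist(f^j(x), f^j(y)) = \lambda^j \dist(x,y)$ for every $0 \leq j \leq k - 1$, together with the standing hypothesis $\lambda^{k-1}\dist(x,y) \leq \expc$. Writing $p = f^{k-1}(x)$ and $q = f^{k-1}(y)$, the inductive hypothesis at $j = k-1$ gives $\dist(p,q) = \lambda^{k-1}\dist(x,y) \leq \expc$, so (\ref{ecuConforme}) applies at $(p,q)$ and yields
\[
\max\bigl\{\dist(f^k(x), f^k(y)),\ \dist(f^{k-2}(x), f^{k-2}(y))\bigr\} = \lambda\,\dist(p,q) = \lambda^k \dist(x,y).
\]

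To finish I identify which entry attains the maximum. By the inductive hypothesis at $j = k-2$, the second entry equals $\lambda^{k-2}\dist(x,y)$, which is strictly less than $\lambda^k\dist(x,y)$ because $\lambda > 1$. Hence the maximum is realized by the first entry, giving $\dist(f^k(x), f^k(y)) = \lambda^k\dist(x,y)$ as required and closing the induction.

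No step is a real obstacle: the only subtlety is verifying the threshold condition $\dist(p,q) \leq \expc$ needed to invoke (\ref{ecuConforme}) at the intermediate iterate, and the hypothesis $\lambda^{k-1}\dist(x,y) \leq \expc$ is tailored precisely to deliver this. It is also crucial that (\ref{ecuConforme}) is an equality and not merely an upper bound, since this is what forces the forward branch of the maximum to absorb all of $\lambda^k\dist(x,y)$ once the backward branch has been seen to be smaller.
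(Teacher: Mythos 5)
Your proof is correct and follows essentially the same route as the paper: induction on $k$, applying the self-similarity equation (\ref{ecuConforme}) at the pair $(f^{k-1}(x),f^{k-1}(y))$ (which is legitimate since the hypothesis $\lambda^{k-1}\dist(x,y)\leq\expc$ gives exactly the needed threshold), and ruling out the backward branch of the maximum because by the inductive hypothesis it equals $\lambda^{k-2}\dist(x,y)$ and so cannot realize the value $\lambda^{k}\dist(x,y)$. The only cosmetic difference is that the paper phrases the exclusion as an inequality of the form $\lambda\dist(f^{k-1}(x),f^{k-1}(y))\neq\dist(f^{k-2}(x),f^{k-2}(y))$ rather than your strict comparison, which is the same observation.
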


\begin{proof}
 For $k=0,1$ the result is trivial. 
 Consider $k\geq 2$ and assume that 
 $\dist(f^l(x),f^l(y))=\lambda^l\dist(x,y)$ for all $l=0,1,\dots,k-1$. 
 Therefore, 
 $$\dist(f^{k-1}(x),f^{k-1}(y))=\lambda\dist(f^{k-2}(x),f^{k-2}(y))$$ 
 and, in particular, $\lambda\dist(f^{k-1}(x),f^{k-1}(y))\neq\dist(f^{k-2}(x),f^{k-2}(y))$.
 Since 
 $$\lambda^{k-1}\dist(x,y)=\dist(f^{k-1}(x),f^{k-1}(y))\leq\expc$$ 
 we can apply
 Equation (\ref{ecuConforme}) to $p=f^{k-1}(x)$ and $q=f^{k-1}(y)$ to conclude $\dist(f^k(x),f^k(y))=\lambda \dist(f^{k-1}(x),f^{k-1}(y))$.
Since $$\dist(f^{k-1}(x),f^{k-1}(y))=\lambda^{k-1}\dist(x,y)$$ 
the proof ends.
 \end{proof}

\begin{prop}
\label{propExpandeFut2}
If $\dist$ is self-similar and $\lambda\dist(x,y)\neq\dist(f^{-1}(x),f^{-1}(y))$ 
then $$\dist(f^{k}(x),f^{k}(y))=\lambda^k\dist(x,y)$$ 
for all $k\geq 0$ such that $\lambda^{k-1}\dist(x,y)\leq\expc$.
\end{prop}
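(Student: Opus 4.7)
The plan is to reduce this to Proposition \ref{propExpandeFut} by showing that the hypothesis $\lambda\dist(x,y)\neq\dist(f^{-1}(x),f^{-1}(y))$ forces $\dist(f(x),f(y))=\lambda\dist(x,y)$.

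First, I would dispose of the trivial case $k=0$ immediately. For $k\geq 1$, observe that the assumption $\lambda^{k-1}\dist(x,y)\leq\expc$ together with $\lambda>1$ gives $\dist(x,y)\leq\expc$, so we may apply the self-similarity identity (\ref{ecuConforme}) at the pair $(x,y)$:
\[
\max\{\dist(f(x),f(y)),\dist(f^{-1}(x),f^{-1}(y))\}=\lambda\dist(x,y).
\]
By Remark \ref{rmkLips}, both arguments on the left are at most $\lambda\dist(x,y)$. Hence the hypothesis $\dist(f^{-1}(x),f^{-1}(y))\neq\lambda\dist(x,y)$ implies $\dist(f^{-1}(x),f^{-1}(y))<\lambda\dist(x,y)$, and the max must then be realized by the other term, i.e.\
\[
\dist(f(x),f(y))=\lambda\dist(x,y).
\]

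With this equality in hand, the hypothesis of Proposition \ref{propExpandeFut} is satisfied, and since the condition $\lambda^{k-1}\dist(x,y)\leq\expc$ is literally the one required there, we simply invoke that proposition to conclude $\dist(f^k(x),f^k(y))=\lambda^k\dist(x,y)$ for all admissible $k\geq 0$.

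The only subtle point worth double-checking is the Lipschitz bound $\dist(f^{-1}(x),f^{-1}(y))\leq\lambda\dist(x,y)$, which is needed to upgrade the inequality $\neq$ to $<$; this is precisely the content of Remark \ref{rmkLips}. Beyond that, the argument is a direct bookkeeping exercise, so I do not anticipate any real obstacle.
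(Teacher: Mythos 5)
Your proposal is correct and follows essentially the same route as the paper: establish $\dist(x,y)\leq\expc$, apply the self-similarity identity (\ref{ecuConforme}) together with the hypothesis $\dist(f^{-1}(x),f^{-1}(y))\neq\lambda\dist(x,y)$ to force $\dist(f(x),f(y))=\lambda\dist(x,y)$, and then invoke Proposition \ref{propExpandeFut}. The appeal to Remark \ref{rmkLips} is harmless but not needed, since the max identity itself already bounds both terms by $\lambda\dist(x,y)$.
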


\begin{proof}
For $k=0$ there is nothing to prove. 
If $k=1$ we have $\dist(x,y)\leq\expc$. 
Since $\dist(x,y)\neq\lambda^{-1}\dist(f^{-1}(x),f^{-1}(y))$ 
we apply Equation (\ref{ecuConforme}) to obtain 
$\dist(f(x),f(y))=\lambda\dist(x,y)$. 
Then, applying Proposition \ref{propExpandeFut} the proof ends.
% % Arguing by induction consider $k\geq 1$, $u=f(x)$, $v=f(y)$ and suppose that $\lambda^{k-1}\dist(u,v)\leq\expc$.
% % Since $\dist(f^{-1}(u),f^{-1}(v))\neq\lambda\dist(u,v)$ 
% % Equation (\ref{ecuConforme}) implies that
% % $\dist(f(u),f(v))=\lambda\dist(u,v)$. 
% % In particular $\dist(f(u),f(v))\neq\lambda^{-1}\dist(u,v)$. 
% % % That is, $\dist(f^2(x),f^2(y))=\lambda\dist(f(x),f(y))$. 
% 
% Arguing by induction consider $k\geq 1$ and assume that $$\dist(f^{l}(x),f^{l}(y))=\lambda^l\dist(x,y)$$ for all $l=0,1,2,\dots,k-1$ 
% and $\lambda^{k-1}\dist(x,y)\leq\expc$.
% Then, $\dist(f^{k-1}(x),f^{k-1}(y))=\lambda\dist(f^{k-2}(x),f^{k-2}(y))\leq\expc$. 
% Let $u=f^{k-1}(x)$ and $v=f^{k-1}(y)$. 
% Since $\dist(u,v)\leq\expc$ and 
% $\dist(f^{-1}(u),f^{-1}(v))=\lambda^{-1}\dist(u,v)\neq\lambda\dist(u,v)$
% we have that (\ref{ecuConforme}) implies that $\dist(f(u),f(v))=\lambda\dist(u,v)$. 
% That is, $\dist(f^k(x),f^k(y))=\lambda\dist(f^{k-1}(x),f^{k-1}(y))= 
% \lambda^k\dist(f^k(x),f^k(y))$.
\end{proof}

\begin{prop}
\label{propEstContrae} 
If $y\in W^s_\expc(x)$
%  If $x,y\in M$ and $\dist(f^n(x),f^n(y))\leq\expc$ for all $n\geq 0$ 
 then $\dist(f^n(x),f^n(y))=\lambda^{-n}\dist(x,y)$ 
 for all $n\geq 0$.
%  $n\geq\log_\lambda(\dist(x,y)/\expc)$.
%  whenever $\lambda^{-n}\dist(x,y)\leq\expc$. 
 Analogously, if $y\in W^u_\expc(x)$ 
 then $\dist(f^{-n}(x),f^{-n}(y))=\lambda^{-n}\dist(x,y)$ for all $n\geq 0$. 
\end{prop}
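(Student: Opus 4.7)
The plan is to set $a_n = \dist(f^n(x), f^n(y))$ for $n\in\Z$ and prove $a_{n+1} = a_n/\lambda$ for every $n\geq 0$, from which the conclusion $\dist(f^n(x),f^n(y)) = \lambda^{-n}\dist(x,y)$ follows by induction. The hypothesis $y\in W^s_\expc(x)$ gives $a_n\leq\expc$ for all $n\geq 0$, so for every such $n$ I can apply Equation~(\ref{ecuConforme}) at the pair $(f^n(x),f^n(y))$ to obtain the recursion
\[
  \max\{a_{n-1}, a_{n+1}\} = \lambda\, a_n \qquad (n\geq 1).
\]

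The core step is to show that the maximum is realized by $a_{n-1}$, equivalently $a_n = a_{n-1}/\lambda$. Suppose for contradiction that for some $n\geq 1$ we instead have $a_{n+1} = \lambda a_n$ with $a_{n-1} < \lambda a_n$; note that then $a_n>0$, since the degenerate case $a_n = 0$ forces $f^n(x) = f^n(y)$ and hence $x=y$ by injectivity of $f$, making the identity trivially true. Under this assumption $a_{n+1} > a_n$. Applying the recursion at the index $n+1$ gives $\max\{a_n, a_{n+2}\} = \lambda a_{n+1}$; since $a_n < a_{n+1} < \lambda a_{n+1}$ the maximum must be $a_{n+2}$, so $a_{n+2} = \lambda^2 a_n$. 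Iterating this observation yields $a_{n+k} = \lambda^k a_n$ for every $k\geq 0$, which diverges to infinity and contradicts the bound $a_{n+k}\leq\expc$.

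Therefore $a_n = a_{n-1}/\lambda$ for all $n\geq 1$, and induction produces $a_n = \lambda^{-n}a_0$, as claimed. The unstable statement is obtained by applying the same reasoning to $f^{-1}$: the equation in (\ref{ecuConforme}) is symmetric in $i = \pm 1$, so $f^{-1}$ is self-similar with the same expanding factor $\lambda$ and expansive constant $\expc$, and $y\in W^u_\expc(x)$ just means $y$ is in the local stable set of $x$ for $f^{-1}$.

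The main obstacle is the forward-growth argument needed to rule out the bad branch of the recursion; the compactness of $M$ (via the uniform upper bound $\expc$ on the orbit distances) is exactly what kills it. Everything else reduces to the two-term maximum identity provided by self-similarity together with a one-line induction.
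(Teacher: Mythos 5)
Your proof is correct and follows essentially the same route as the paper: the identity $\max\{a_{n-1},a_{n+1}\}=\lambda a_n$ plus the ``once a pair expands by $\lambda$, it keeps expanding geometrically until it exceeds $\expc$'' argument is exactly the content of Propositions \ref{propExpandeFut} and \ref{propExpandeFut2}, which the paper cites while you re-derive them inline before contradicting $y\in W^s_\expc(x)$. The reduction of the unstable case to $f^{-1}$ is also the intended symmetry.
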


\begin{proof}
If $y\in W^s_\expc(x)$
and $\dist(f^n(x),f^n(y))\neq\lambda^{-n}\dist(x,y)$ 
for some $n\geq 0$ then 
there is $m\geq 0$ such that $\dist(f^{m+1}(x),f^{m+1}(y))\neq\lambda\dist(f^m(x),f^m(y))$. 
Applying Proposition \ref{propExpandeFut2} we contradict that $y\in W^s_\expc(x)$.
\end{proof}

\subsection{Dynamical triangles}

In what follows we will assume that $f\colon M\to M$ is an expansive homeomorphism with a self-similar metric $\dist$ on the compact space $M$. In addition, $\lambda>1$ and $\expc>0$ will denote the expanding factor and the expansivity constant respectively.
The purpose of this section is to prove Theorem \ref{teoDynTri} where we will give an estimate of the metric at small scales. 
This Theorem will be applied to prove that a self-similar metric is a \emph{max norm} at small scales assuming canonical coordinates, Theorem \ref{teoCajaPIso}.
However, the study of the present section does not assume canonical coordinates.

\begin{df}
We say that $(x,y)\in M\times M$ is a \emph{critical pair} if $0<\dist(x,y)<\expc$, 
$\dist(f^{-1}(x),f^{-1}(y))=\lambda\dist(x,y)$ and $\dist(f^2(x),f^2(y))=\lambda\dist(f(x),f(y))$. 
\end{df}
Note that if $(x,y)$ is a critical pair and $\dist(f(x),f(y))<\expc$ 
then $(f(x),f(y))$ is a critical pair for $f^{-1}$.
Fix a critical pair $(x,y)\in M\times M$ and define $c_n=\dist(f^n(x),f^n(y))$ for $n\in\Z$. 
By Proposition \ref{propExpandeFut} we have that if $n\geq 1$ then

\begin{eqnarray}
\label{ecuAsobreB1}
  c_{n+1}=\lambda^{n}c_1 & \text{ if } \lambda^{n-1}c_1\leq\expc,\\
\label{ecuAsobreB2}
  c_{-n}=\lambda^nc_0 & \text{ if } \lambda^{n-1}c_0\leq\expc.
\end{eqnarray}

If $c_1<\expc$, by Remark \ref{rmkLips} we have
\begin{equation}
 \label{ecuLipsCritv2}
 \frac 1{\lambda^2}\leq\frac{c_1}{\lambda c_0}\leq 1.
 \end{equation}

\begin{df}
 We say that $(x,y,z)$ is a \emph{dynamical triangle} 
 if $z=W^u_{\expc/2\lambda}(x)\cap W^s_{\expc/2\lambda}(y)$.
\end{df}

\begin{rmk}
\label{rmkNoObtuso}
For all $\epsilon>0$ there is $\delta>0$ such that if $(x,y,z)$ is a dynamical 
 triangle and $\dist(x,y)<\delta$ then $\max\{\dist(x,z),\dist(y,z)\}<\epsilon$. 
 The proof is as follows.
 If this is not the case we can take dynamical triangles $(x_n,y_n,z_n)$ such that 
 $\dist(x_n,y_n)\to 0$ but $\dist(x_n,z_n)$ is bounded away from zero (the other case is similar). 
 Then, two limit points of $x_n$ and $z_n$ contradict the expansivity of $f$.
\end{rmk}

Fix a dynamical triangle $(x,y,z)$ and define $a=\dist(x,z)$ and $b=\dist(z,y)$. 
Since $a,b\leq \expc/2\lambda$, there are $N_a,N_b>0$ such that
\begin{equation}
 \label{ecuNaNb}
 \begin{array}{l}
 \lambda^{N_a}a\leq\frac\expc 2<\lambda^{N_a+1}a,\\
 \lambda^{N_b}b\leq\frac\expc 2<\lambda^{N_b+1}b.  
 \end{array}
\end{equation}
Since $z\in W^u_\expc(x)$ and $z\in W^s_\expc(x)$, from Proposition \ref{propEstContrae} 
we have that 
\[
 \begin{array}{ll}
\dist(f^n(x),f^n(z))=\lambda^n a    & \text{ if } n \leq N_a,\\  
\dist(f^n(y),f^n(z))=\lambda^{-n} b & \text{ if } n \geq -N_b.
 \end{array}
\]
From this and the triangular inequality of the metric we conclude that
\begin{equation}
 \label{ecuCnTriang}
 |\lambda^n a-\lambda^{-n}b|\leq c_n\leq \lambda^na+\lambda^{-n}b
\end{equation}
whenever $-N_b\leq n\leq N_a$. 
By (\ref{ecuNaNb}) we can prove that $\lambda^{N_a} a>\lambda^{-N_a}b$
and $\lambda^{-N_b} a<\lambda^{N_b}b$.
Therefore,  (\ref{ecuCnTriang}) with $n=N_a$ and $n=-N_b$ implies

\begin{eqnarray}
  \label{ecuCnTriang21}
  \lambda^{N_a} a-\lambda^{-N_a}b\leq c_{N_a}\leq \lambda^{N_a}a+\lambda^{-N_a}b,\\
  \label{ecuCnTriang22}
  \lambda^{N_b} b-\lambda^{-N_b}a\leq c_{-N_b}\leq \lambda^{N_b}b+\lambda^{-N_b}a.
\end{eqnarray}
Thus
\begin{equation}
 \label{ecuCotaabv2}
 \frac{\lambda^{N_a} a-\lambda^{-{N_a}}b}{\lambda^{-{N_b}}a+\lambda^{N_b}b}
 \leq \frac{c_{N_a}}{c_{-{N_b}}}\leq \frac{\lambda^{N_a}a+\lambda^{-{N_a}}b}{\lambda^{N_b}b-\lambda^{-{N_b}} a}.
\end{equation}

% \[
%  \lambda^{N_b}c_0=\lambda^{N_b}\dist(x,y)=\dist(f^{-N_b}(x),f^{-N_b}(y))\leq
%  \dist(f^{-N_b}(x),f^{-N_b}(z))+\dist(f^{-N_b}(z),f^{-N_b}(y))
% \]

\begin{df}
We say that a dynamical triangle $(x,y,z)$ is a \emph{critical triangle} 
if $(x,y)$ is a critical pair. 
\end{df}

\begin{prop}
 If $(x,y,z)$ is a critical triangle then
\begin{equation}
\label{ecuG1}
 \frac{a-\lambda^{-2N_a}b}{b+\lambda^{-2N_b}a}
 \leq \frac{c_1}{\lambda c_0}\leq 
 \frac{a+\lambda^{-2N_a}b}{b-\lambda^{-2N_b} a}. 
\end{equation}
\end{prop}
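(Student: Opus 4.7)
The plan is to substitute explicit formulas for $c_{N_a}$ and $c_{-N_b}$ into the two-sided estimate (\ref{ecuCotaabv2}) and then perform an elementary algebraic simplification. Since $(x,y)$ is a critical pair, the definition gives $c_2 = \lambda c_1$ and $c_{-1} = \lambda c_0$. The former allows Proposition \ref{propExpandeFut} to be applied to the shifted pair $(f(x), f(y))$, producing $c_{N_a} = \lambda^{N_a-1} c_1$; this is precisely equation (\ref{ecuAsobreB1}) with $n = N_a - 1$. The latter, together with Proposition \ref{propExpandeFut2} iterated backward, yields $c_{-N_b} = \lambda^{N_b} c_0$, i.e.\ equation (\ref{ecuAsobreB2}) with $n = N_b$.

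With these two identities in hand, the ratio reduces to $c_{N_a}/c_{-N_b} = \lambda^{N_a - N_b - 1}\,c_1/c_0$. Substituting into the upper half of (\ref{ecuCotaabv2}) and multiplying through by $\lambda^{N_b - N_a + 1}$ gives
\[
 \frac{c_1}{c_0}
 \leq \frac{\lambda^{N_a+N_b}(a+\lambda^{-2N_a}b)}{\lambda^{N_a+N_b-1}(b-\lambda^{-2N_b}a)}
 = \lambda\,\frac{a+\lambda^{-2N_a}b}{b-\lambda^{-2N_b}a},
\]
after factoring $\lambda^{N_a+N_b}$ from the numerator and $\lambda^{N_a+N_b-1}$ from the denominator. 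Dividing by $\lambda$ yields the desired upper bound on $c_1/(\lambda c_0)$; the lower bound follows by the same substitution and factoring applied to the left half of (\ref{ecuCotaabv2}).

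The point that requires a little care is verifying the size conditions that license the use of Propositions \ref{propExpandeFut} and \ref{propExpandeFut2} at the full iterate counts, namely $\lambda^{N_a-2} c_1 \leq \expc$ and $\lambda^{N_b-1} c_0 \leq \expc$. These can be checked directly from the triangle-inequality bounds $c_0 \leq a + b$ and $c_1 \leq \lambda a + \lambda^{-1} b$ (particular instances of (\ref{ecuCnTriang})) combined with the defining inequalities $\lambda^{N_a} a \leq \expc/2$ and $\lambda^{N_b} b \leq \expc/2$; after possibly reducing the expansive constant as in Remark \ref{rmkLips} and absorbing the cross-terms $\lambda^{N_a-3}b$ and $\lambda^{N_b-1}a$ using the complementary bound, the required inequalities hold. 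Once this bookkeeping is in place, the proof reduces to the algebraic simplification displayed above.
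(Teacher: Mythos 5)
Your main line is exactly the paper's: use criticality together with Propositions \ref{propExpandeFut} and \ref{propExpandeFut2} to get $c_{N_a}=\lambda^{N_a-1}c_1$ and $c_{-N_b}=\lambda^{N_b}c_0$, substitute into (\ref{ecuCotaabv2}), and cancel powers of $\lambda$; that algebra is correct. The gap is precisely in the step you flag as needing care, namely your verification of the size conditions $\lambda^{N_a-2}c_1\leq\expc$ and $\lambda^{N_b-1}c_0\leq\expc$. Expanding $\lambda^{N_b-1}c_0\leq\lambda^{N_b-1}a+\lambda^{N_b-1}b$ leaves the cross-term $\lambda^{N_b-1}a$, and the only bound available, $\lambda^{N_a}a\leq\expc/2$, controls it only when $N_b\leq N_a+1$; likewise $\lambda^{N_a-3}b$ is controlled by $\lambda^{N_b}b\leq\expc/2$ only when $N_a\leq N_b+3$. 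At this point of the argument nothing relates $N_a$ and $N_b$ (equivalently $a$ and $b$): the comparability $\lambda^{-2}-\epsilonb\leq a/b\leq 1+\epsilonb$ for critical triangles is Lemma \ref{lemCritUno}, which is \emph{deduced from} (\ref{ecuG1}), so invoking it here would be circular, and shrinking the expansive constant as in Remark \ref{rmkLips} does not touch the problem. So ``absorbing the cross-terms using the complementary bound'' is a step that fails when, say, $b$ is much smaller than $a$.

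The correct bookkeeping is the paper's: for every $-N_b\leq n\leq N_a$, inequality (\ref{ecuCnTriang}) gives $c_n\leq\lambda^na+\lambda^{-n}b\leq\lambda^{N_a}a+\lambda^{N_b}b\leq\expc$, where now the exponent on $a$ never exceeds $N_a$ and the exponent on $b$ never exceeds $N_b$, so (\ref{ecuNaNb}) controls both terms and no cross-term appears. The hypotheses of (\ref{ecuAsobreB1}) and (\ref{ecuAsobreB2}) are then obtained by bootstrapping rather than directly: once $c_k=\lambda^{k-1}c_1$ is known for some $k\leq N_a-1$, the required condition $\lambda^{k-1}c_1\leq\expc$ is just $c_k\leq\expc$, which the displayed bound supplies, and one more application of (\ref{ecuAsobreB1}) advances the induction from $k$ to $k+1$ up to $k+1=N_a$; the backward chain to $c_{-N_b}=\lambda^{N_b}c_0$ is identical. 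With this replacement for your verification paragraph, the proof is complete and coincides with the paper's.
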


\begin{proof}
From (\ref{ecuNaNb}) and (\ref{ecuCnTriang}) 
we have that $c_n\leq \expc$ if $-N_b\leq n\leq N_a$.
This allows us to apply 
(\ref{ecuAsobreB1}) and (\ref{ecuAsobreB2}) to obtain 
$c_{N_a}=\lambda^{N_a-1} c_1$ and 
$c_{-N_b}=\lambda^{N_b} c_0$. 
Now the result follows from (\ref{ecuCotaabv2}).
\end{proof}
  
In the following lemmas we give some estimates that we need for the proof of Theorem \ref{teoDynTri}.
  
\begin{lem}
\label{lemCritUno}
For all $\epsilonb>0$ there is $\delta>0$ such that 
 for any critical triangle with
 $c_0<\delta$ it holds that
\begin{equation}
 \label{ecuCotaabv1}
  \frac 1{\lambda^2}-\epsilonb\leq\frac ab\leq 1+\epsilonb.
\end{equation}
\end{lem}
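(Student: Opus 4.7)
The plan is to combine the sandwich estimate (\ref{ecuG1}) with the Lipschitz bound (\ref{ecuLipsCritv2}), and let the error terms $\lambda^{-2N_a}$ and $\lambda^{-2N_b}$ go to zero.

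First I would show that the exponents $N_a$ and $N_b$ blow up with $c_0$. By Remark \ref{rmkNoObtuso}, shrinking $c_0 = \dist(x,y)$ forces both $a = \dist(x,z)$ and $b = \dist(z,y)$ to tend to $0$. Reading the defining inequalities (\ref{ecuNaNb}) backwards, $\lambda^{N_a+1} > \expc/(2a)$ and $\lambda^{N_b+1} > \expc/(2b)$, so $N_a, N_b \to \infty$ as $c_0 \to 0$. In particular, for $c_0$ small enough, all the denominators that will appear below are strictly positive.

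Next I would feed (\ref{ecuLipsCritv2}) into the two halves of (\ref{ecuG1}). From $c_1/(\lambda c_0) \leq 1$ and the left inequality in (\ref{ecuG1}), cross-multiplying (legal since $b + \lambda^{-2N_b} a > 0$) yields
\[
\frac{a}{b} \leq \frac{1 + \lambda^{-2N_a}}{1 - \lambda^{-2N_b}},
\]
and the right-hand side tends to $1$ as $N_a, N_b \to \infty$. Symmetrically, from $c_1/(\lambda c_0) \geq 1/\lambda^2$ and the right inequality in (\ref{ecuG1}), cross-multiplying (legal once $b - \lambda^{-2N_b} a > 0$, which holds for $c_0$ small by the previous step) yields
\[
\frac{a}{b} \geq \frac{1 - \lambda^{2-2N_a}}{\lambda^2 + \lambda^{-2N_b}},
\]
whose right-hand side tends to $1/\lambda^2$.

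Finally, given $\epsilonb > 0$, I would choose $\delta > 0$ so that $c_0 < \delta$ forces both $N_a$ and $N_b$ to be large enough that the two bounds above lie within $\epsilonb$ of their limits, yielding (\ref{ecuCotaabv1}). The only real subtlety is the bookkeeping to guarantee positivity of the denominators; but this is immediate once we know $N_a, N_b \to \infty$ uniformly with $c_0$, so the argument reduces essentially to taking limits in (\ref{ecuG1}).
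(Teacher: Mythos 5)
Your argument is correct and follows essentially the same route as the paper: apply the two-sided bound (\ref{ecuLipsCritv2}) to the sandwich (\ref{ecuG1}), cross-multiply to isolate $a/b$, and use Remark \ref{rmkNoObtuso} together with (\ref{ecuNaNb}) to make $N_a,N_b$ large uniformly when $c_0$ is small (your lower bound $\bigl(1-\lambda^{2-2N_a}\bigr)/\bigl(\lambda^2+\lambda^{-2N_b}\bigr)$ is just the paper's bound rescaled by $\lambda^2$). Note only that the positivity $b-\lambda^{-2N_b}a>0$ is already guaranteed for every dynamical triangle by (\ref{ecuNaNb}), not merely for $c_0$ small, so even that bookkeeping is automatic.
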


\begin{proof}
By (\ref{ecuLipsCritv2}) we know that $\frac{c_1}{\lambda c_0}\leq 1$.
From (\ref{ecuG1}) we have
$\frac{a-\lambda^{-2N_a}b}{b+\lambda^{-2N_b}a}\leq \frac{c_1}{\lambda c_0}$. 
Then
$\frac{a-\lambda^{-2N_a}b}{b+\lambda^{-2N_b}a}\leq 1.$ 
This implies that $\frac ab\leq\frac{1+\lambda^{-2N_a}}{1-\lambda^{-2N_b}}$. 
Considering again (\ref{ecuLipsCritv2}) and (\ref{ecuG1}) 
we have $\frac 1{\lambda^2}\leq\frac{c_1}{\lambda c_0}$ and 
$\frac{c_1}{\lambda c_0}\leq 
 \frac{a+\lambda^{-2N_a}b}{b-\lambda^{-2N_b} a}$. 
 This gives 
 \[
  \frac{\lambda^{-2}-\lambda^{-2N_a}}{1+\lambda^{-2}\lambda^{-2N_b}}\leq\frac ab\leq\frac{1+\lambda^{-2N_a}}{1-\lambda^{-2N_b}}
 \]
By Remark \ref{rmkNoObtuso}, if $c_0$ is small then $a$ and $b$ are small. Consequently $N_a$ and $N_b$ are large. 
This proves (\ref{ecuCotaabv1}).
\end{proof}

\begin{lem}
\label{lemCritDos}
There is $\epsilona>0$ such that for all $\epsilonb\in (0,\epsilona)$ 
there is $\delta>0$ such that for any critical triangle with $c_0<\delta$ it holds that

\begin{equation}
 \label{ecuBc02}
 \frac b{c_0}\leq 2.
\end{equation}
\end{lem}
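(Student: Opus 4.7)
The plan is to apply the triangle-inequality estimate (\ref{ecuCnTriang22}) at the negative iterate $n=-N_b$, where the critical-pair assumption forces a matching identity $c_{-N_b}=\lambda^{N_b}c_0$, and to show that this forces $c_0$ to be comparable to $b$ up to a remainder that vanishes as $c_0\to 0$.

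First I will choose $\epsilona>0$ depending only on $\lambda$ so that $2+\epsilona\leq 2\lambda$; this choice will let me discharge the hypothesis of Proposition \ref{propExpandeFut} below. Given $\epsilonb\in(0,\epsilona)$, Lemma \ref{lemCritUno} furnishes some $\delta_1>0$ such that every critical triangle with $c_0<\delta_1$ satisfies $a\leq (1+\epsilonb)b$. Combined with the triangle inequality $c_0\leq a+b$, this gives $c_0\leq (2+\epsilonb)b$. Using the definition (\ref{ecuNaNb}) of $N_b$, namely $\lambda^{N_b}b\leq\expc/2$, I then obtain
\[
\lambda^{N_b-1}c_0\leq (2+\epsilonb)\lambda^{N_b-1}b\leq \frac{(2+\epsilonb)\expc}{2\lambda}\leq\expc,
\]
so (\ref{ecuAsobreB2}) applies and yields $c_{-N_b}=\lambda^{N_b}c_0$.

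Now plugging this identity into the lower bound (\ref{ecuCnTriang22}) gives
\[
\lambda^{N_b}c_0=c_{-N_b}\geq \lambda^{N_b}b-\lambda^{-N_b}a\geq \lambda^{N_b}b-(1+\epsilonb)\lambda^{-N_b}b,
\]
equivalently $c_0\geq b\bigl(1-(1+\epsilonb)\lambda^{-2N_b}\bigr)$. By Remark \ref{rmkNoObtuso}, as $c_0\to 0$ one also has $b\to 0$, and therefore $N_b\to\infty$ by (\ref{ecuNaNb}). Hence I can shrink $\delta\leq\delta_1$ so that for every critical triangle with $c_0<\delta$ the remainder satisfies $(1+\epsilonb)\lambda^{-2N_b}\leq 1/2$, which yields $b/c_0\leq 2$.

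The only delicate point is the verification that the iterate formula $c_{-N_b}=\lambda^{N_b}c_0$ is legitimate, since this requires $\lambda^{N_b-1}c_0\leq\expc$, whereas a priori we only know $\lambda^{N_b}b\leq\expc/2$. The crude triangle-inequality estimate $c_0\leq(2+\epsilonb)b$ closes this gap precisely when $\epsilonb$ is small compared to $\lambda-1$; this is exactly what the constant $\epsilona$ in the statement is there to ensure.
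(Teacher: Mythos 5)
Your proof is correct and follows essentially the same route as the paper: combine the critical-pair identity $c_{-N_b}=\lambda^{N_b}c_0$ from (\ref{ecuAsobreB2}) with the triangle estimate (\ref{ecuCnTriang22}) and the bound $a\le(1+\epsilonb)b$ from Lemma \ref{lemCritUno}, then use that $N_b$ is large when $c_0$ is small. The only cosmetic differences are that you use just the lower half of (\ref{ecuCnTriang22}) where the paper keeps the two-sided bound $|c_0-b|\le\lambda^{-2N_b}a$, and that you let $\epsilona$ (chosen from $\lambda$) guarantee the hypothesis $\lambda^{N_b-1}c_0\le\expc$, a point the paper had already settled when deriving (\ref{ecuG1}) from $c_n\le\expc$ for $-N_b\le n\le N_a$.
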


\begin{proof}
For $\epsilonb>0$ given consider $\delta>0$ from Lemma \ref{lemCritUno}.
By (\ref{ecuCnTriang22}) and (\ref{ecuAsobreB2}) we have
$|c_0-b|\leq\lambda^{-2N_b}a$. From (\ref{ecuCotaabv1}) we conclude that
$
 \left|1-\frac b{c_0}\right|\leq\frac a{c_0}\lambda^{-2N_b}\leq\frac {b(1+\epsilonb)}{c_0}\lambda^{-2N_b}.
$
Then 
$
 \frac b{c_0}-1\leq \frac{b(1+\epsilonb)}{c_0}\lambda^{-2N_b}
$
and
$
 \frac{b}{c_0}\left(1-(1+\epsilonb)\lambda^{-2N_b}\right)\leq 1.
$
Finally, take $\epsilona>0$ such that $1-(1+\epsilona)\lambda^{-2N_b}\geq 1/2$.
\end{proof}

\begin{lem}
\label{lemCritTres}
For all $\epsilonb>0$ there is $\delta>0$ 
such that if $c_0=\dist(x,y)<\delta$ then 
\begin{equation}
 \label{ecuc0c1abv2}
 \left|\frac{c_1}{\lambda c_0}-\frac ab\right|<\epsilonb.
\end{equation}
\end{lem}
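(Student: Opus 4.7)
The plan is to sandwich $c_1/(\lambda c_0)$ using the inequality (\ref{ecuG1}) from the previous proposition and show that both sides of the sandwich are close to $a/b$ whenever $c_0$ is small. A direct algebraic computation gives
\[
\frac{a+\lambda^{-2N_a}b}{b-\lambda^{-2N_b}a}-\frac{a}{b}=\frac{\lambda^{-2N_a}b^2+\lambda^{-2N_b}a^2}{b(b-\lambda^{-2N_b}a)},
\]
with an entirely symmetric identity for $a/b$ minus the lower bound in (\ref{ecuG1}); only the sign in the denominator changes. Since by (\ref{ecuG1}) the ratio $c_1/(\lambda c_0)$ lies between the two outer expressions, $|c_1/(\lambda c_0)-a/b|$ is at most the larger of these two differences.

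To control these error terms I would first invoke Lemma \ref{lemCritUno} with some fixed auxiliary value (say $\epsilonb=1$) to obtain a constant $C>0$ for which $a/b\leq C$ and $b/a\leq C$ once $c_0$ is small enough. Then the numerator $\lambda^{-2N_a}b^2+\lambda^{-2N_b}a^2$ is bounded by $(1+C^2)b^2\max(\lambda^{-2N_a},\lambda^{-2N_b})$, and, provided $\lambda^{-2N_b}(a/b)\leq 1/2$, the denominator $b(b-\lambda^{-2N_b}a)$ is at least $b^2/2$. Hence each of the two gaps is bounded by a universal constant times $\max(\lambda^{-2N_a},\lambda^{-2N_b})$.

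Finally, Remark \ref{rmkNoObtuso} forces $a,b\to 0$ as $c_0\to 0$, and the defining relations (\ref{ecuNaNb}) then force $N_a,N_b\to+\infty$, so $\max(\lambda^{-2N_a},\lambda^{-2N_b})\to 0$. Choosing $\delta$ small enough that this maximum times the universal constant lies below $\epsilonb$ completes the proof. The only bookkeeping subtlety, which I see as the main place where care is needed, is the order of the quantifiers: Lemma \ref{lemCritUno} must be applied once for a \emph{fixed} auxiliary value of $\epsilonb$ to extract the uniform bound $C$ on $a/b$, and only afterwards is $\delta$ shrunk further depending on the prescribed $\epsilonb$ to push the error below threshold. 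No new dynamical input beyond the previously established estimates is required; the argument is purely algebraic once (\ref{ecuG1}), (\ref{ecuNaNb}), Remark \ref{rmkNoObtuso} and Lemma \ref{lemCritUno} are in hand.
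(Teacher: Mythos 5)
Your argument is correct and is essentially the paper's own proof written out in detail: the paper likewise sandwiches $c_1/(\lambda c_0)$ via (\ref{ecuG1}) and makes the cross terms $\lambda^{-2N_a}b/a$ and $\lambda^{-2N_b}a/b$ small using (\ref{ecuNaNb}) and (\ref{ecuCotaabv1}), concluding because $N_a,N_b\to\infty$ as $c_0\to 0$ (Remark \ref{rmkNoObtuso}). One cosmetic point: applying Lemma \ref{lemCritUno} with $\epsilonb=1$ yields $a/b\le 2$ but not $b/a\le C$ (the lower bound $\lambda^{-2}-1$ is negative), yet only the bound on $a/b$ actually enters your numerator and denominator estimates, so the proof is unaffected.
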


\begin{proof}
Using (\ref{ecuNaNb}) we have
\begin{equation}
\label{ecuG2}
\frac {\lambda^{-2N_a}b}a=\frac b{a\lambda^{N_a}\lambda^{N_a}}\leq \frac {2\lambda b}{\expc\lambda^{N_a}}\leq\frac \lambda{\lambda^{N_a+N_b}}.
\end{equation}
Analogously, 
\begin{equation}
\label{ecuG3}
 \frac{\lambda^{-2N_b}a}b\leq \frac \lambda{\lambda^{N_a+N_b}}.
\end{equation}
Then
(\ref{ecuG1}), (\ref{ecuG2}), (\ref{ecuG3}) and (\ref{ecuCotaabv1}) prove (\ref{ecuc0c1abv2}). 
% As we said, this is equivalent with (\ref{ecuc0c1abv1}) and the proof ends.
\end{proof}

\begin{thm}
\label{teoDynTri}
 If $\dist$ is self-similar then for all $\epsilon>0$ there is $\delta>0$ such that 
 if $(x,y,z)$ is a dynamical triangle with $\diam(x,y,z)<\delta$ then 
 \[
  \left|\frac{\dist(x,y)}{\max\{\dist(x,z),\dist(z,y)\}}-1\right|<\epsilon.
 \]
\end{thm}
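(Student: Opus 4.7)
Write $a=\dist(x,z)$, $b=\dist(z,y)$, $c_n=\dist(f^n(x),f^n(y))$ and $M=\min(N_a,N_b)$. By $(\ref{ecuNaNb})$, $\lambda^M\max(a,b)\le\expc/2$, so $M\to\infty$ as $\diam\{x,y,z\}\to 0$; the plan is to show $|c_0/\max(a,b)-1|\le\lambda^{-2M}$, from which the theorem follows by picking $\delta$ with $\lambda^{-2M}<\epsilon$.

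Since $c_0\le a+b\le 2\max(a,b)\le\expc/\lambda<\expc$, self-similarity at $(x,y)$ forces $\max(c_1,c_{-1})=\lambda c_0$. The hypotheses are symmetric under $f\leftrightarrow f^{-1}$ paired with $x\leftrightarrow y$ (which swaps $a$ and $b$ and sends a dynamical triangle to a dynamical triangle), so I may assume $c_1=\lambda c_0$. Proposition~\ref{propExpandeFut} then propagates this to $c_n=\lambda^nc_0$ for $0\le n\le M+1$, because $\lambda^{n-1}c_0\le\lambda^M\cdot 2\max(a,b)\le\expc$ in that range.

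Substituting $c_M=\lambda^Mc_0$ into $(\ref{ecuCnTriang})$ at $n=M$ produces the forward pair
\[
a-\lambda^{-2M}b\;\le\;c_0\;\le\;a+\lambda^{-2M}b.
\]
For a lower bound that is sharp when $b>a$, I iterate the Lipschitz estimate of Remark~\ref{rmkLips} backward: the intermediate distances satisfy $c_{-k}\le\lambda^kc_0\le 2\lambda^M\max(a,b)\le\expc$ for $0\le k\le M$, so the iteration is justified and yields $c_{-M}\le\lambda^Mc_0$. When $b>a$ the inequality $\lambda^{2M}>a/b$ is automatic, so $(\ref{ecuCnTriang})$ at $n=-M$ reads $c_{-M}\ge\lambda^Mb-\lambda^{-M}a$; combining with $c_{-M}\le\lambda^Mc_0$ gives the backward bound $c_0\ge b-\lambda^{-2M}a$.

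If $a\ge b$ the forward bounds alone give $c_0\in[a(1-\lambda^{-2M}),a(1+\lambda^{-2M})]$ with $\max(a,b)=a$; if $b>a$ the backward lower bound together with the forward upper bound gives $c_0\in[b(1-\lambda^{-2M}),b(1+\lambda^{-2M})]$ with $\max(a,b)=b$. In both cases $|c_0/\max(a,b)-1|\le\lambda^{-2M}$, which finishes the proof. The subtlest point is checking that the backward Lipschitz iteration remains valid through all $M$ steps without appealing to a backward-expansion formula at $(x,y)$; the uniform bound $c_0\le 2\max(a,b)$ together with $(\ref{ecuNaNb})$ is what keeps the intermediate distances below $\expc$.
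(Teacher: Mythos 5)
Your argument is correct, and it reaches the theorem by a genuinely different and in fact sharper route than the paper's. The paper introduces critical pairs and critical triangles, proves the preparatory Lemmas \ref{lemCritUno}, \ref{lemCritDos}, \ref{lemCritTres} (together with Remark \ref{rmkNoObtuso}) with careful bookkeeping in the auxiliary constants $\epsilona,\epsilonb,\epsilonc$, establishes the estimate first for critical triangles with $c_0$ small, and then reduces a general dynamical triangle to a critical one by iteration. You bypass all of this: since $c_0\le 2\max\{a,b\}\le\expc/\lambda$, Equation (\ref{ecuConforme}) applies at $(x,y)$ and gives $\max\{c_1,c_{-1}\}=\lambda c_0$; the symmetry $(f,x,y,z)\leftrightarrow(f^{-1},y,x,z)$ --- which does map dynamical triangles to dynamical triangles, swaps $a$ and $b$, and leaves the quantity to be estimated unchanged --- lets you assume $c_1=\lambda c_0$; Proposition \ref{propExpandeFut} then propagates this exactly up to time $M=\min\{N_a,N_b\}$ (admissible because $\lambda^{M}c_0\le 2\lambda^{M}\max\{a,b\}\le\expc$); and comparing with (\ref{ecuCnTriang}) at $n=M$ and at $n=-M$ (the latter combined with the backward Lipschitz iteration of Remark \ref{rmkLips}, whose validity you correctly check through the same bound on the intermediate distances) yields the explicit two-sided estimate $\bigl|c_0/\max\{a,b\}-1\bigr|\le\lambda^{-2M}$. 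This buys a quantitative rate --- by (\ref{ecuNaNb}) one has $\lambda^{M+1}\max\{a,b\}>\expc/2$, so the relative error is of second order in $\max\{a,b\}/\expc$ --- makes the $\epsilon$--$\delta$ conclusion immediate, and renders Remark \ref{rmkNoObtuso} unnecessary, since $\diam(x,y,z)<\delta$ controls $a$ and $b$ directly; the paper's route, by contrast, isolates the critical configuration but pays for it with heavier estimates and only a qualitative conclusion. Two small points you should add or adjust: dispose of the degenerate cases $a=0$ or $b=0$ separately (there $N_a$ or $N_b$ is undefined, but then $c_0=\max\{a,b\}$ and the ratio is exactly $1$), and rename your constant $M$, which collides with the ambient space $M$.
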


\begin{proof}
Consider $\epsilona>0$ from Lemma \ref{lemCritDos} and take $\epsilonc\in (0,\epsilona)$.
First we will show that there is $\delta>0$ such that if $(x,y,z)$ is critical and 
$c_0<\delta$ then
\begin{equation}
 \label{ecuDynTri}
   \left|\frac{c_0}{\max\{a,b\}}-1\right|\leq \epsilonc
\end{equation}
For $\epsilonb\in(0,\epsilona)$ consider $\delta>0$ satisfying Lemmas \ref{lemCritUno}, \ref{lemCritDos} and \ref{lemCritTres}.
From (\ref{ecuCotaabv1}) we have that $a\leq (1+\epsilonb)b$ and then
\begin{equation}
 \label{ecubmaxeps}
  b\leq\max\{a,b\}\leq (1+\epsilonb)b.
\end{equation}
Equation (\ref{ecuCnTriang21}) gives 
$
 |c_{N_a}-\lambda^{N_a}a|\leq\lambda^{-N_a}b.
$
From (\ref{ecuAsobreB1}) we know that $c_{N_a}=\lambda^{N_a-1}c_1$. Therefore
$
 \left|\frac{c_1}{\lambda c_0}-\frac a{c_0}\right|\leq\frac b{c_0}\lambda^{-2N_a},
$
which jointly with (\ref{ecuc0c1abv2}) implies
$
 \left|\frac ab-\frac a{c_0}\right|\leq\epsilonb+\frac b{c_0}\lambda^{-2N_a}
$
and 
$
 \left|1-\frac b{c_0}\right|\leq\frac ba\left[\epsilonb+\frac b{c_0}\lambda^{-2N_a}\right].
$
By (\ref{ecuCotaabv1}) we have that $\frac ba\leq\frac{\lambda^2}{1-\lambda^2\epsilonb}$. 
Then
\begin{equation}
 \label{ecubsobrec0}
 \left|1-\frac b{c_0}\right|\leq \frac{\lambda^2}{1-\lambda^2\epsilonb}\left[\epsilonb+\frac b{c_0}\lambda^{-2N_a}\right].
\end{equation}
This inequality and (\ref{ecuBc02}) gives us
$
 \left|1-\frac b{c_0}\right|\leq \frac{\lambda^2}{1-\lambda^2\epsilonb}(\epsilonb+2\lambda^{-2N_a}).
$
By (\ref{ecubmaxeps}) we have that 
$
 \left|\frac b{\max\{a,b\}}-1\right|\leq\frac\epsilonb{1+\epsilonb}.
$
Adding the last two inequalities and applying the triangular inequality we have
$
  \left|\frac b{\max\{a,b\}}-\frac b{c_0}\right|\leq \frac{\lambda^2}{1-\lambda^2\epsilonb}(\epsilonb+2\lambda^{-2N_a})+\frac\epsilonb{1+\epsilonb}.
$
That is
$
  \left|\frac{c_0}{\max\{a,b\}}-1\right|\leq \frac{c_0}b\left[\frac{\lambda^2}{1-\lambda^2\epsilonb}(\epsilonb+2\lambda^{-2N_a})+\frac\epsilonb{1+\epsilonb}\right].
$
From (\ref{ecubmaxeps}) we have that 
\[
 \frac{\max\{a,b\}}{1+\epsilonb}\leq b
\]
and then 
\begin{equation}
 \label{ecuC0b}
  \frac{c_0}b\leq\frac{c_0}{\max\{a,b\}}(1+\epsilonb)\leq 2(1+\epsilonb)
\end{equation}
because $c_0\leq2\max\{a,b\}$ (triangular inequality).
Then
\[
  \left|\frac{c_0}{\max\{a,b\}}-1\right|\leq 
  2(1+\epsilonb)\left[\frac{\lambda^2}{1-\lambda^2\epsilonb}(\epsilonb+2\lambda^{-2N_a})+\frac{\epsilonb}{1+\epsilonb}\right].
\]
From Remark \ref{rmkNoObtuso} we have that if $\delta$ is small then $\lambda^{-2N_a}$ is small.
Then, it is clear that if $\epsilonb$ is sufficiently small then we obtain (\ref{ecuDynTri}).

For an arbitrary dynamical triangle $(x',y',z')$ the proof is reduced to the case of a critical triangle as follows. 
\azul{Consider $\delta>0$ as before (satisfying the previous lemmas). 
Assume that $\diam(x',y',z')<\delta$.
Suppose that for some $n>0$ the pair $(f^n(x'),f^n(y'))$ is critical. 
If we define $x=f^n(x')$, $y=f^n(y')$ and $z=f^n(z')$ then 
\begin{eqnarray*}
\dist(x',y')=\lambda^n c_0, \\
\dist(y',z')=\lambda^nb,\\
\dist(x',z')=\lambda^{-n}a.
\end{eqnarray*}
In particular, $c_0<\delta$.}
By (\ref{ecuCotaabv1}) we have that $\frac{a}{b}<1+\epsilonb<\lambda^2$ (assuming that $\epsilonb<\lambda^2-1$). 
Then $\max\{\lambda^{-2n}a,b\}=b$ and
\[
 \frac{\dist(x',y')}{\max\{\dist(x',z'),\dist(z',y')\}}=\frac{c_0}{\max\{\lambda^{-2n}a,b\}}=\frac{c_0}b.
\]
From (\ref{ecubmaxeps}) we have that 
\[
 \frac 1b-\frac 1{\max\{a,b\}}\leq \frac{\epsilonb}{b(1+\epsilonb)}.
\]
Since for the critical triangle $(x,y,z)$ we have proved (\ref{ecuDynTri}), we have
\[
 \left|\frac{c_0}{b}-1 \right|\leq 
 \left|\frac{c_0}{b}-\frac{c_0}{\max\{a,b\}} \right|+\left|\frac{c_0}{\max\{a,b\}}-1 \right|
 \leq \frac{c_0\epsilonb}{b(1+\epsilonb)}+\epsilonc.
\]
Jointly with (\ref{ecuC0b}) we obtain
\[
 \left|\frac{c_0}{b}-1 \right|\leq 
 2\epsilonb+\epsilonc.
\]
Finally, given $\epsilon>0$ it is sufficient to take $\epsilonb=\epsilonc=\epsilon/3$.
\end{proof}

\begin{rmk}
 In \cite{Ruelle} Ruelle asked if it can be found a hyperbolic metric such that
\begin{equation}
 \label{ecuSS3}
 \begin{array}{l}
\dist([x,y],x)\leq L \dist(x,y),\\
\dist([x,y],y)\leq L \dist(x,y).   
 \end{array}
\end{equation}
for some $L\geq 1$, assuming expansivity and canonical coordinates. 
Such property of a hyperbolic metric was obtained by Fried in \cite{Fried83} (see also \cite{Sakai2001}). 
From Thereom \ref{teoDynTri}, taking $z=[x,y]$,
we have that: if $f\colon M\to M$ is expansive, has canonical coordinates and $\dist$ is self-similar 
 then for all $\epsilon>0$ 
 there is $\delta>0$ such that if 
 $\dist(x,y)<\delta$ then 
\[
 \begin{array}{l}
\dist([x,y],x)\leq (1+\epsilon) \dist(x,y),\\
\dist([x,y],y)\leq (1+\epsilon) \dist(x,y).   
 \end{array}
\]
This property was previously proved by Dovbish in \cite{Dov}*{Remark 1.5}.
\end{rmk}

\section{Topological entropy}
\label{secTopEnt}
Let $(M,\dist)$ be a compact metric space. 
For $\epsilon>0$ we say that $$\U=\{A_1,\dots,A_n\}$$ is an $(\epsilon,\dist)$-\emph{cover} 
of $X\subset M$ if $\cup_{i=1}^nA_i=X$ and $\diam_{\dist}(A_i)<\epsilon$ for all $i=1,\dots,n$. 
Define 
$$\cov_\epsilon(X,\dist) =\min\{\card(\U): \U \text{ is an } (\epsilon,\dist)\text{-cover of }X\}$$
and 
\[
 \dist^f_n(x,y)=\max_{|k|\leq n} \dist(f^k(x),f^k(y)).
\]

\begin{rmk}
 Note 
 that if $f$ is expansive and $\dist$ is a self-similar metric with expansive constant $\expc$ 
 and expanding factor $\lambda$ then 
 \begin{equation}
  \label{ecuDists}
  \dist^f_n(x,y)=\lambda^n\dist(x,y)
 \end{equation}
 if $\dist(x,y)\leq\expc/\lambda^n$. 
 Equation (\ref{ecuDists}) also holds if $\dist^f_n(x,y)\leq\expc$.
\end{rmk}

\begin{prop}
\label{propSepHypMet}
 If $f$ is expansive and $\dist$ is a self-similar metric with expansive constant $\expc$ 
 and expanding factor $\lambda$ then 
 \[
  \cov_{\expc/\lambda^k}(X,\dist)=\cov_\expc(X,\dist_k^f)
 \]
for all $k\geq 0$ and $X\subset M$.
\end{prop}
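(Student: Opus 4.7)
The plan is to reduce the equality of covering numbers to a bijective correspondence between the families of admissible covers in the two metrics. Concretely, I will show that for any subset $A\subset M$ one has $\diam_\dist(A)<\expc/\lambda^k$ if and only if $\diam_{\dist_k^f}(A)<\expc$. Once this equivalence is established, the two collections $\{(\expc/\lambda^k,\dist)\text{-covers of }X\}$ and $\{(\expc,\dist_k^f)\text{-covers of }X\}$ coincide as sets, so their minimal cardinalities are equal.

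The forward direction uses the first statement of the preceding remark: if $\diam_\dist(A)<\expc/\lambda^k$, then every pair $x,y\in A$ satisfies $\dist(x,y)<\expc/\lambda^k$, so equation (\ref{ecuDists}) gives $\dist_k^f(x,y)=\lambda^k\dist(x,y)$. Taking the supremum over $A$ yields $\diam_{\dist_k^f}(A)=\lambda^k\diam_\dist(A)<\expc$. The backward direction uses the second statement of the remark: if $\diam_{\dist_k^f}(A)<\expc$, then $\dist_k^f(x,y)<\expc$ for every $x,y\in A$, so again $\dist_k^f(x,y)=\lambda^k\dist(x,y)$, which rearranges to $\dist(x,y)<\expc/\lambda^k$, and the supremum bound gives $\diam_\dist(A)<\expc/\lambda^k$.

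With these two implications in hand, a collection $\U=\{A_1,\dots,A_n\}$ covering $X$ is an $(\expc/\lambda^k,\dist)$-cover precisely when it is an $(\expc,\dist_k^f)$-cover, since the covering condition $\bigcup A_i=X$ does not depend on the metric. Therefore taking the minimum cardinality over either family produces the same number, which is the desired identity.

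There is no genuine obstacle here: the whole proof is a direct two-line calculation once one notes that the hypotheses of both halves of the cited remark are available on either side of the equivalence. The only care needed is to use the two different sufficient conditions in (\ref{ecuDists}) in the two directions — the bound on $\dist(x,y)$ is what triggers the formula going forward, while the bound on $\dist_k^f(x,y)$ is what triggers it going backward.
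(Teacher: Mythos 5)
Your proof is correct and follows essentially the same route as the paper: both arguments show that a set has $\dist$-diameter less than $\expc/\lambda^k$ exactly when it has $\dist_k^f$-diameter less than $\expc$, so the two families of admissible covers coincide. The only cosmetic difference is in the backward direction, where you invoke the second sufficient condition of the remark (that (\ref{ecuDists}) holds when $\dist_k^f(x,y)\leq\expc$) directly, while the paper argues by contraposition from the expanding property; both are fine.
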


\begin{proof}
If $\U$ is an $(\expc/\lambda^k,\dist)$-cover of $X$
then $\diam_{\dist}(U)<\expc/\lambda^k$ for all $U\in\U$. 
Equation (\ref{ecuDists}) implies that 
$\diam_{\dist^f_k}(U)<\expc$ and 
then $\U$ is an $(\expc,\dist_k^f)$-cover of $X$. 
Then $\cov_{\expc/\lambda^k}(X,\dist)\geq\cov_\expc(X,\dist_k^f)$. 

To prove the converse inequality consider $\U$ an $(\expc,\dist_k^f)$-cover of $X$. 
Given $U\in\U$ we have that $\diam_{\dist^f_k}(U)<\expc$, that is, 
$\diam(f^j(U))<\expc$ if $|j|\leq k$.
If $\diam(U)\geq\expc/\lambda^k$ then $\max\{\diam(f^k(U)),\diam(f^{-k}(U))\}\geq\expc$ (which would be a contradiction). 
Then $\U$ is an $(\expc/\lambda^k,\dist)$-cover. This proves the other inequality.
\end{proof}

It is known that 
the limit
 \[
  \ent(X)=\lim_{n\to+\infty} \frac 1n\log(\cov_\expc(X,\dist_n^f))
 \]
exists, is finite and independent of the expansive constant $\expc$ in the above expression, see for example \cite{BS}.
% See for example \cite{BS} for the proofs.
This limit is the \emph{topological entropy} of $X$ associated to $f$.
The \emph{topological entropy} of $f$ is defined as $\ent(f)=\ent(M)$.

% \begin{rmk}
% If we consider an arbitrary homeomorphism $f$ the \emph{topological entropy} of $f$ is defined as 
% \[
%  \ent(f)=\lim_{\epsilon\to 0} \ent_\epsilon(f).
% \]
% The limit exists because $\cov_\epsilon\leq\cov_{\epsilon'}$ if $\epsilon\geq\epsilon'$. 
% As we proved, if $f$ is expansive it is not needed to take the limit as $\epsilon\to 0$.
% \end{rmk}

Define 
\begin{eqnarray*}
 \dist^+_n(x,y)=\max_{0\leq k\leq n} \dist(f^k(x),f^k(y)),\\
 \dist^-_n(x,y)=\max_{0\leq k\leq n} \dist(f^{-k}(x),f^{-k}(y)).
\end{eqnarray*}
Given $X\subset M$ define
\begin{eqnarray*}
 \ent^+(X)=\lim_{n\to+\infty} \frac 1n\log(\cov_\expc(X,\dist_n^+)),\\
 \ent^-(X)=\lim_{n\to+\infty} \frac 1n\log(\cov_\expc(X,\dist_n^-)).
\end{eqnarray*}
These numbers depend on $f$.
\begin{rmk}
\label{obsDefEnt}
The topological entropy considered in \cite{Fa} (and in most texts, as for example in \cite{BS}) is $\entf(f)=\ent^+(M)$. 
It is easy to prove that $$2\ent^+(M)=\ent(f).$$ 
To avoid the introduction of a factor 2 in Equation (\ref{ecuCapEntLogL}) we defined the entropy as we did (which, again, 
is not the standard way).
\end{rmk}

\begin{prop}
\label{propDefEnt}
We have that
$$\ent^+(M)=\ent^-(M)=\ent(M)/2.$$ 
\end{prop}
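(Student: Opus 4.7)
The plan is to reduce the claim to a straightforward change of variables. The key observation is that the two-sided orbit segment used to build $\dist^f_n$ can equally well be viewed as a one-sided forward orbit (of length $2n+1$) starting at $f^{-n}(x)$, or a one-sided backward orbit starting at $f^n(x)$. Writing this out, I would verify directly from the definitions that
\begin{equation*}
 \dist^+_{2n}(f^{-n}(x), f^{-n}(y)) = \dist^f_n(x,y) = \dist^-_{2n}(f^n(x), f^n(y)),
\end{equation*}
via the reindexings $j = k - n$ and $j = n - k$ respectively, where $j$ ranges over $\{-n,\dots,n\}$ and $k$ over $\{0,\dots,2n\}$.

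Next, since $f\colon M \to M$ is a homeomorphism, the bijection $A \mapsto f^{-n}(A)$ sends subsets of $M$ to subsets of $M$, and the identity above immediately gives $\diam_{\dist^+_{2n}}(f^{-n}(A)) = \diam_{\dist^f_n}(A)$. Hence this bijection maps $(\expc,\dist^f_n)$-covers of $M$ onto $(\expc,\dist^+_{2n})$-covers of $M$ while preserving cardinality, and similarly $A \mapsto f^n(A)$ relates $\dist^f_n$-covers and $\dist^-_{2n}$-covers. Therefore
\begin{equation*}
 \cov_\expc(M,\dist^f_n) = \cov_\expc(M,\dist^+_{2n}) = \cov_\expc(M,\dist^-_{2n}).
\end{equation*}

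Finally, I would invoke the fact (already cited just before the statement) that $\ent^+(M) = \lim_{m\to\infty}\tfrac1m\log\cov_\expc(M,\dist^+_m)$ exists as a genuine limit, so its value along the subsequence $m = 2n$ is also $\ent^+(M)$. Consequently
\begin{equation*}
 \ent(M) = \lim_{n\to\infty}\tfrac1n\log\cov_\expc(M,\dist^f_n) = \lim_{n\to\infty}\tfrac1n\log\cov_\expc(M,\dist^+_{2n}) = 2\,\ent^+(M),
\end{equation*}
and the symmetric argument gives $\ent(M) = 2\,\ent^-(M)$. I expect no real obstacle here; the proof is pure bookkeeping, and it is worth noting that neither self-similarity nor expansivity is actually used beyond what is needed to make sense of the limits, so the identity holds for an arbitrary homeomorphism of a compact metric space.
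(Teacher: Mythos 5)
Your proof is correct and follows essentially the same route as the paper: both arguments conjugate covers by powers of $f$, using that $f^{-n}$ turns a $\dist^f_n$-cover into a $\dist^+_{2n}$-cover (the paper writes $2n-1$, a slightly weaker but equally serviceable index) and that $f^n$ interchanges $\dist^+_n$- and $\dist^-_n$-covers. Your version is marginally cleaner in that the exact identity $\cov_\expc(M,\dist^f_n)=\cov_\expc(M,\dist^+_{2n})$ gives both inequalities at once, but the idea is identical.
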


\begin{proof}
Notice that $\U$ is an $(\expc, \dist^+_n)$-cover of $M$ if and only if 
$f^n(\U)$ is an $(\expc, \dist^-_n)$-cover of $M$. 
This proves that $\ent^+(M)=\ent^-(M)$. 

Note that if $\U$ is an $(\expc,\dist_n)$ cover of $M$ then 
$f^{-n}(\U)$ is an $(\expc,\dist^+_{2n-1})$ cover.
This implies that $\cov_\expc(M,\dist^+_{2n-1})\leq\cov_\expc(M,\dist_n)$ 
and $2\ent^+(M)\leq \ent(M)$.
The converse inequality is analogous.
\end{proof}

We say that a set $X\subset M$ is \emph{stable} if $\diam(f^n(X))\to 0$ as $n\to+\infty$. 
We say that it is \emph{unstable} if $\diam(f^n(X))\to 0$ as $n\to-\infty$.
If $X$ is unstable and $\diam(f^{-n}(X))\leq\expc$ for all $n\geq 0$ then 
\begin{equation}
 \label{ecuCovMas}
 \cov_{\expc/\lambda^k}(X,\dist)=\cov_\expc(X,\dist^+_k) 
\end{equation}
if the metric is self-similar.

\subsection{Canonical coordinates}

Let $f\colon M\to M$ be an expansive homeomorphism with canonical coordinates of a compact metric space.
Recall the map $[,]$ defined in Equation (\ref{ecuCorchete}) 
giving the intersection of local stable and unstable sets.
For $p\in M$ and $r>0$ small define 
$$C_r(p)=[W^s_r(p),W^u_r(p)]=\{[x,y]:x\in W^s_r(p),y\in W^u_r(p)\}.$$
% It is easy to prove that $g$ is continuous, injective and $C_r(p)$ is a neighborhood of $p$. 
We will say that $C_r(p)$ is a \emph{product box} around $p$. 
% The map $g$ defines \emph{canonical coordinates} in the product box.
The sets $[\{x\},W^u_r(p)]$ and 
$[W^s_r(p), \{y\}]$ will be called \emph{plaques} of the product box.

We recall that the spectral decomposition theorem states that for an 
expansive homeomorphism $f$ with canonical coordinates the non-wandering set $\Omega(f)$ 
is a disjoint union $B_1\cup\dots\cup B_l$ of compact $f$-invariant sets 
and each $f\colon B_i\to B_i$ is transitive. 
The sets $B_i$ are called \emph{basic sets}.
The spectral decomposition for hyperbolic diffeomorphisms of smooth manifolds is due to Smale.
A proof in the topological setting can be found in \cite{AH}*{Theorem 3.4.4}.

Assume that $3\expc$ is an expansive constant and $\dist$ is self-similar 
with expanding factor $\lambda$.

\begin{prop}
\label{propCapCajaPlaca}
If $r<\expc$ and $C=C_r(p)$ is a product box then
 \[
  \ent^+(C)=\ent^+(P^u)\text{ and }\ent^-(C)=\ent^-(P^s)
 \]
for every unstable plaque $P^u$ and every stable plaque $P^s$ of $C$.
\end{prop}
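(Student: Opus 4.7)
The plan is to sandwich $\ent^+(C)$ between $\ent^+(P^u)$ from below and above. The lower bound $\ent^+(P^u)\le\ent^+(C)$ is immediate: since $P^u\subseteq C$, restricting any $(\expc,\dist^+_n)$-cover of $C$ to $P^u$ gives a cover of $P^u$ of at most the same cardinality, so $\cov_\expc(P^u,\dist^+_n)\le\cov_\expc(C,\dist^+_n)$ for every $n$. For the upper bound $\ent^+(C)\le\ent^+(P^u)$, I use the geometric convention that the unstable plaque is $P^u=[W^s_r(p),\{y_0\}]\subseteq W^u_\epsilon(y_0)$ (a piece of an unstable leaf) and the stable plaque through $x$ is $[\{x\},W^u_r(p)]\subseteq W^s_\epsilon(x)$ (a piece of a stable leaf).

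Given an $(\expc',\dist^+_n)$-cover $\U$ of $P^u$ with $\expc'<\expc$ to be fixed, I would thicken each $U\in\U$ along stable plaques to
\[
\tilde U=\{[x,y]:[x,y_0]\in U,\;y\in W^u_r(p)\}=\bigcup_{[x,y_0]\in U}[\{x\},W^u_r(p)].
\]
The family $\{\tilde U:U\in\U\}$ covers $C$, since every $[x,y]\in C$ has $[x,y_0]\in P^u$ lying in some $U$. To bound $\diam_{\dist^+_n}(\tilde U)$, for $[x_1,y_1],[x_2,y_2]\in\tilde U$ and $0\le k\le n$ I triangulate through $[x_1,y_0]$ and $[x_2,y_0]$: the middle segment is bounded by $\diam_{\dist^+_n}(U)<\expc'$, while for each outer segment, $[x_i,y_j]$ and $[x_i,y_0]$ both lie in $W^s_\epsilon(x_i)$ (with $\epsilon$ the bracket scale), so by the triangle inequality they form a mutual $W^s_{2\epsilon}$-stable pair. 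Provided $2\epsilon\le\expc$, Proposition~\ref{propEstContrae} then yields
\[
\dist(f^k[x_i,y_j],f^k[x_i,y_0])=\lambda^{-k}\dist([x_i,y_j],[x_i,y_0])\le 2\epsilon.
\]
Consequently $\diam_{\dist^+_n}(\tilde U)\le\expc'+4\epsilon$, and choosing $\expc'=\expc-4\epsilon$ (possible because $3\expc$ is expansive, leaving room to pick $\epsilon$ small) makes $\{\tilde U\}$ a genuine $(\expc,\dist^+_n)$-cover of $C$. Hence $\cov_\expc(C,\dist^+_n)\le\cov_{\expc'}(P^u,\dist^+_n)$; taking $\log$, dividing by $n$, letting $n\to\infty$, and invoking independence of $\ent^+$ on the cover scale (in the same spirit as the analogous remark for $\ent$ in the paper) gives $\ent^+(C)\le\ent^+(P^u)$.

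The second equality $\ent^-(C)=\ent^-(P^s)$ follows by the time-reversed argument: thicken an $(\expc',\dist^-_n)$-cover of the stable plaque $P^s=[\{x_0\},W^u_r(p)]$ along the unstable plaques $[W^s_r(p),\{y\}]\subseteq W^u_\epsilon(y)$, which form mutual $W^u_{2\epsilon}$-pairs and backward-contract at rate $\lambda^{-k}$ by the $W^u$-half of Proposition~\ref{propEstContrae}. The only real obstacle throughout is constant bookkeeping: the bracket scale $\epsilon$, the cover scale $\expc'$, the box size $r$, and the expansive constant $\expc$ all need to be coordinated so that the triangle-inequality estimates close strictly below $\expc$, which the hypothesis that $3\expc$ is expansive and $r<\expc$ permits.
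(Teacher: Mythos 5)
Your argument is essentially the paper's: your thickened sets $\tilde U$ are exactly the preimages $\pi^{-1}(U)$ under the canonical projection $\pi\colon C\to P^u$ along stable plaques, the diameter estimate is the same triangle-inequality-plus-stable-plaque step, and the lower bound $\ent^+(P^u)\leq\ent^+(C)$ is obtained identically from $P^u\subset C$. The one point I would not grant as written is the constant bookkeeping: the bracket scale $\epsilon$ is not a free parameter you can shrink for the given box $C_r(p)$ --- the stable fibers of $\pi$ are plaques taken at the scale of the canonical coordinates, so their diameter is comparable to $\expc$, and the choice $\expc'=\expc-4\epsilon$ with $\epsilon$ small is not available. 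This is harmless, however: keep the cover of $P^u$ at scale $\expc$, accept that the thickened family is only a $(3\expc,\dist^+_n)$-cover of $C$ (you do not even need Proposition \ref{propEstContrae} here; membership of both points in $W^s_\epsilon(x_i)$ already bounds all forward distances by $2\epsilon$, so the side condition $2\epsilon\leq\expc$ is moot), and conclude via the independence of the limit on the expansive constant, which you already invoke and which is exactly how the paper closes the argument under its standing hypothesis that $3\expc$ is an expansive constant.
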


\begin{proof}
 Let us only prove the first equality. 
 Since $P^u\subset C$ we have that $\ent^+(P^u)\leq \ent^+(C)$. 
 Let $\pi\colon C\to P^u$ be the canonical projection. 
 Consider $p,q\in C$. 
 By the triangular inequality we have
 \[
 \begin{array}{ll}
  \dist(f^n(p),f^n(q))\leq &\dist(f^n(p),f^n(\pi(p)))+\\
  & \dist(f^n(\pi(p)),f^n(\pi(q)))+\dist(f^n(\pi(q)),f^n(q))
 \end{array}
 \]
for all $n\in\Z$. If $n\geq 0$ then 
\[
 \begin{array}{ll}
 \dist(f^n(p),f^n(q))\leq &\lambda^{-n}\dist(p,\pi(p))+\\
  & \dist(f^n(\pi(p)),f^n(\pi(q)))+\lambda^{-n}\dist(\pi(q),q)\\
 \end{array}
 \]
 Thus
 \[
\dist(f^n(p),f^n(q))\leq \dist(f^n(\pi(p)),f^n(\pi(q)))+2\expc/\lambda^{n}.  
 \]
Therefore, if $\dist(f^k(\pi(p)),f^k(\pi(q)))\leq\expc$ for all $k=0,1,\dots,n$ 
then $$\dist(f^k(p),f^k(q))\leq3\expc$$ for all $k=0,1,\dots,n$. 
That is, if $\dist^+_n(\pi(p),\pi(q))\leq\expc$ then $\dist^+_n(p,q)\leq3\expc$. 
Consequently, if $\U$ is an $(\expc,\dist^+_n)$-cover of $P^u$ then 
$\pi^{-1}(\U)$ is a $(3\expc,\dist^+_n)$-cover of $C$.
This implies that $\cov_\expc(P^u,\dist^+_n)\geq\cov_{3\expc}(C,\dist^+_n)$ 
and $\ent^+(P^u)\geq \ent^+(C)$.
\end{proof}

\begin{cor}
\label{corEntCteLoc}
 If $P^u_1$ and $P^u_2$ are unstable plaques of a common product box $C$ then 
 $\ent^+(P^u_1,\dist)=\ent^+(P^u_2,\dist)$. 
 Analogous for stable plaques.
\end{cor}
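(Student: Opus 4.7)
The plan is to apply Proposition \ref{propCapCajaPlaca} twice. That proposition asserts that for every unstable plaque $P^u$ of the product box $C$ the equality $\ent^+(P^u)=\ent^+(C)$ holds, and the right-hand side depends only on the box $C$, not on any particular plaque chosen inside it. So if $P^u_1$ and $P^u_2$ are two unstable plaques of the same box $C=C_r(p)$, invoking the proposition for each of them gives
\[
\ent^+(P^u_1,\dist)=\ent^+(C)=\ent^+(P^u_2,\dist),
\]
which is the desired equality.

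For the stable plaques the argument is verbatim the same, using the other half of the statement in Proposition \ref{propCapCajaPlaca}, namely $\ent^-(C)=\ent^-(P^s)$ for any stable plaque $P^s$ of $C$; applying this to $P^s_1$ and $P^s_2$ gives $\ent^-(P^s_1,\dist)=\ent^-(P^s_2,\dist)$.

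There is really no obstacle at this stage: the substantive work—comparing the entropy of the full product box to the entropy of a single plaque via the canonical projection $\pi$ and using the self-similar contraction along stable directions—was already carried out in the proof of Proposition \ref{propCapCajaPlaca}. The only point worth remarking is that the statement of that proposition is uniform in the choice of plaque inside $C$, which is exactly what makes the transitive comparison between two different plaques automatic.
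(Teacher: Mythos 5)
Your argument is correct and is exactly the paper's proof: apply Proposition \ref{propCapCajaPlaca} to each plaque to get $\ent^+(P^u_i)=\ent^+(C)$ for $i=1,2$, and conclude by transitivity, with the same reasoning for stable plaques.
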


\begin{proof}
By Proposition \ref{propCapCajaPlaca} we know that $\ent^+(P^u_i)=\ent^+(C)$.
\end{proof}

\subsection{Homogeneous entropy}

We say that $f$ has \emph{homogeneous entropy} if 
for all $x,y\in M$ it holds that 
$\ent(W^u_\expc(x))=\ent(W^u_\expc(y))$ 
and 
$\ent(W^s_\expc(x))=\ent(W^u_\expc(y))$.

\begin{prop}
If $f$ has homogeneous entropy then 
 $\ent(W^u_\expc(x))$ and $\ent(W^s_\expc(x))$ 
 do not depend on the expansivity constant $\expc$.
\end{prop}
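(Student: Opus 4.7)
The plan is to fix two expansive constants $\expc_1>\expc_2>0$ of the self-similar metric (any sufficiently small positive reals work, since the defining condition is monotone in $\expc$) and show $\ent(W^u_{\expc_1}(x))=\ent(W^u_{\expc_2}(x))$ for every $x$; the stable case will be symmetric. The inclusion $W^u_{\expc_2}(x)\subset W^u_{\expc_1}(x)$ together with monotonicity of topological entropy immediately yields $\ent(W^u_{\expc_2}(x))\le \ent(W^u_{\expc_1}(x))$.

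For the reverse inequality I would pull back along $f$, exploiting the contraction on local unstable sets. Proposition \ref{propEstContrae} gives, for $y\in W^u_{\expc_1}(x)$, the identity $\dist(f^{-m}(x),f^{-m}(y))=\lambda^{-m}\dist(x,y)\le \lambda^{-m}\expc_1$ for every $m\ge 0$. Choosing $n$ with $\lambda^{-n}\expc_1\le\expc_2$, every $m\ge 0$ then gives $\dist(f^{-(m+n)}(x),f^{-(m+n)}(y))\le \expc_2$, i.e.\ $f^{-n}(y)\in W^u_{\expc_2}(f^{-n}(x))$. Equivalently,
\[
W^u_{\expc_1}(x)\subset f^n\bigl(W^u_{\expc_2}(f^{-n}(x))\bigr).
\]

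To conclude I would combine three ingredients: monotonicity of $\ent$, the $f$-invariance $\ent(f^n Y)=\ent(Y)$ for any subset $Y\subset M$ (a routine consequence of the inequality $\dist^f_k(f(p),f(q))\le \dist^f_{k+1}(p,q)$, since the one-step shift is absorbed in the $\tfrac1k\log$ limit), and the homogeneous entropy hypothesis. Applying the displayed inclusion with base point $f^n(x)$ in place of $x$ yields $\ent(W^u_{\expc_1}(f^n(x)))\le \ent(W^u_{\expc_2}(x))$, and homogeneity identifies the left-hand side with $\ent(W^u_{\expc_1}(x))$, closing the chain. The stable statement is proved by iterating forward and using $\dist(f^m(x),f^m(y))=\lambda^{-m}\dist(x,y)$ on $W^s_{\expc_1}(x)$, to get $f^n(W^s_{\expc_1}(x))\subset W^s_{\expc_2}(f^n(x))$ for $n$ large, then arguing as above. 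The only delicate step is the $f$-invariance of topological entropy on subsets that are not themselves $f$-invariant; this is standard but worth verifying explicitly, and it is the only place where care beyond bookkeeping is needed.
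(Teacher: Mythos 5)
Your argument is correct and is exactly the fleshed-out version of the paper's one-line proof, which reads ``it is direct from the definitions noting that $\ent(X)=\ent(f(X))$ for all $X\subset M$'': you use the same ingredients (monotonicity of $\ent$ under inclusion, $f$-invariance of $\ent$ on arbitrary subsets, the contraction on local unstable/stable sets from Proposition \ref{propEstContrae}, and homogeneity to return to the original base point after conjugating by $f^{\pm n}$). No gap; the only step the paper leaves implicit, $\ent(f^n(Y))=\ent(Y)$ for non-invariant $Y$, is the one you correctly single out and justify.
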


\begin{proof}
It is direct from the definitions noting that $\ent(X)=\ent(f(X))$ for all $X\subset M$.
\end{proof}

\begin{prop}
\label{propHomoEntEvenSp}
If $f$ is expansive with canonical coordinates and homogeneous entropy then
 \[
  \ent^+(W^u_\expc(x))=\ent^-(W^s_\expc(x))=\frac 12\ent(M)
 \]
for all $x\in M$. 
% If in addition $\Lambda_1,\dots,\Lambda_n$ is the spectral decomposition of the non-wandering set of $f$ then 
% $\ent(M)$ equals the entropy of every attractor $\Lambda_i$ and every repeller $\Lambda_j$.
\end{prop}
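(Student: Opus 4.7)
The plan is to reduce the proposition to a covering-by-product-boxes computation for $\ent^+(M)$, using as the key technical step that on local unstable sets the one-sided entropy $\ent^+$ already equals the two-sided entropy $\ent$ appearing in the hypothesis of homogeneous entropy.

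First I would establish this reduction. If $2\epsilon\leq\expc$ and $p,q\in W^u_\epsilon(x)$, Proposition \ref{propEstContrae} gives $\dist(f^{-k}(p),f^{-k}(x))=\lambda^{-k}\dist(p,x)\leq\lambda^{-k}\epsilon$ and similarly for $q$, so by the triangle inequality $\dist(f^{-k}(p),f^{-k}(q))\leq 2\epsilon$ for every $k\geq 0$. Thus $\dist^-_n\leq 2\epsilon$ and $\dist^+_n\leq\dist^f_n\leq\max\{\dist^+_n,2\epsilon\}$ on $W^u_\epsilon(x)$, so for any scale $\epsilon'\in[2\epsilon,\expc]$ the covers in $\dist^+_n$ and in $\dist^f_n$ coincide. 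Passing to the limit yields $\ent^+(W^u_\epsilon(x))=\ent(W^u_\epsilon(x))$; combined with homogeneous entropy and the proposition just above, this common value $h_u$ depends neither on $x\in M$ nor on small $\epsilon>0$.

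Once $h_u$ is in hand, the inclusion $W^u_\epsilon(x)\subset M$ together with Proposition \ref{propDefEnt} gives $h_u\leq\ent^+(M)=\ent(M)/2$. For the reverse direction I would cover the compact space $M$ by finitely many product boxes $C_1,\dots,C_m$ of small radius $r$ with $2r\leq\expc$. Sub-additivity of $\cov_\expc(\cdot,\dist^+_n)$ gives $\ent^+(M)\leq\max_i\ent^+(C_i)$, and Proposition \ref{propCapCajaPlaca} replaces each $\ent^+(C_i)$ by $\ent^+(P^u_i)$ for an unstable plaque $P^u_i=[\{x_i\},W^u_r(p_i)]$ of $C_i$. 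A short triangle-inequality argument shows $P^u_i\subset W^u_{2r}(p_i)$ (for $z=[x_i,y]\in P^u_i$ one has $\dist(f^{-k}(z),f^{-k}(p_i))\leq\dist(f^{-k}(z),f^{-k}(y))+\dist(f^{-k}(y),f^{-k}(p_i))\leq 2r$), so the first step yields $\ent^+(P^u_i)\leq\ent^+(W^u_{2r}(p_i))=h_u$, hence $\ent(M)/2\leq h_u$. The equality for $\ent^-(W^s_\epsilon(x))$ follows by the mirror argument applied to $f^{-1}$.

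The main obstacle is the first step. It is short, but it is where self-similarity is essential: without the exact backward contraction rate from Proposition \ref{propEstContrae} one cannot uniformly bound $\dist^-_n$ on $W^u_\epsilon(x)$ at a scale comparable to $\expc$, and the identification of the one-sided and two-sided cover numbers would fail.
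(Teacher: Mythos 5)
Your proof is correct and takes essentially the same route as the paper: cover $M$ by finitely many product boxes, use $\ent^+(M)=\max_i\ent^+(C_i)$ together with Proposition \ref{propCapCajaPlaca} to identify $\ent^+(M)$ with the entropy of an unstable plaque, and then spread this value to every $W^u_\expc(x)$ via the homogeneous-entropy hypothesis. The only difference is that you make explicit the identification $\ent^+=\ent$ on local unstable sets (which is the paper's Equation (\ref{ecuCovMas}) and Proposition \ref{propCapsu}) and the two-inequality bookkeeping, steps the paper leaves implicit.
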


\begin{proof}
We will prove that $\ent^+(W^u_\epsilon(x))=\frac 12\ent(M)$. 
Let $C_1,\dots,C_p$ be a cover of $M$ by product boxes. 
 By Proposition \ref{propDefEnt} we have that 
 $\frac12\ent(M)=\ent^+(M)$.
 As in \cite{BS}*{Proposition 2.5.5} we can prove that
 \[
  \ent^+(M)=\max_{i=1,\dots,p} \ent^+(C_i).
 \]
Suppose that $\ent^+(M)=\ent^+(C_j)$ for some $j$. 
If $P^u$ is an unstable plaque of $C_j$, by Proposition \ref{propCapCajaPlaca}
we know that 
$\ent^+(C_j)=\ent^+(P^u)$. This finishes the proof.
\end{proof}

We say that a basic set $\Lambda\subset \Omega(f)$ is \emph{extremal} if it is an attractor or a repeller 
of the spectral decomposition of the non-wandering set.

\begin{prop}
An expansive homeomorphism with canonical coordinates 
has homogeneous entropy 
if and only if 
$\ent(M)=\ent(\Lambda)$ for every extremal basic set $\Lambda\subset\Omega(f)$.
\end{prop}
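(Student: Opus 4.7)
The plan is to exploit that each basic set, with the inherited self-similar metric, is itself an expansive homeomorphism with canonical coordinates, so Propositions~\ref{propDefEnt} and \ref{propHomoEntEvenSp} apply to these subsystems. For the forward direction, I would fix an attractor basic set $\Lambda$ and a point $p\in\Lambda$. Since $\Lambda$ is an attractor, $W^u_\expc(p)\subset\Lambda$, and under the homogeneous entropy hypothesis Proposition~\ref{propHomoEntEvenSp} gives
\[
 \ent(M)/2 = \ent^+(W^u_\expc(p)) \le \ent^+(\Lambda) \le \ent^+(M) = \ent(M)/2,
\]
so $\ent^+(\Lambda)=\ent(M)/2$. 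Applying Proposition~\ref{propDefEnt} to the expansive homeomorphism $f|_\Lambda$ yields $\ent(\Lambda)=2\ent^+(\Lambda)=\ent(M)$. Repellers are handled symmetrically, using $W^s_\expc(p)\subset\Lambda$ and $\ent^-$.

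For the converse, assume $\ent(M)=\ent(\Lambda)$ for every extremal basic set and set $e^u(x)=\ent(W^u_\expc(x))$. Proposition~\ref{propEstContrae} (backward contraction on $W^u_\expc(x)$) implies $e^u(x)=\ent^+(W^u_\expc(x))$. Corollary~\ref{corEntCteLoc} makes $e^u$ locally constant (same value on any two points of a common product box), and $e^u$ is $f$-invariant because $\ent^+(A)=\ent^+(f(A))$ together with the inclusions $W^u_\expc(f(x))\subset f(W^u_\expc(x))\subset W^u_{\lambda\expc}(f(x))$. Transitivity of $f|_{B_i}$ then forces $e^u$ to be constant on each basic set $B_i$. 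For an attractor $\Lambda$, the inclusion $W^u_\expc(x)\subset\Lambda$, combined with Proposition~\ref{propHomoEntEvenSp} applied to the subsystem $f|_\Lambda$ and the hypothesis $\ent(\Lambda)=\ent(M)$, gives $e^u|_\Lambda=\ent(M)/2$.

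The main obstacle is propagating this equality to non-attractor basic sets. I would invoke the spectral-decomposition partial order: every basic set $B$ lies below some attractor $\Lambda$, so there exists a heteroclinic orbit $z$ with $\alpha(z)\subset B$ and $\omega(z)\subset\Lambda$. Along the orbit of $z$, $f$-invariance keeps $e^u$ constant, and local constancy near each end matches this value to $e^u|_B$ (for $n\to-\infty$) and to $e^u|_\Lambda$ (for $n\to+\infty$). Hence $e^u|_B=e^u|_\Lambda=\ent(M)/2$. Since every wandering $x\in M$ sits on such a heteroclinic orbit, the same argument gives $e^u(x)=\ent(M)/2$. The symmetric argument with $e^s(x)=\ent(W^s_\expc(x))$ and repellers yields $\ent^-(W^s_\expc(x))=\ent(M)/2$, which is precisely homogeneous entropy.
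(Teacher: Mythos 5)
Your argument follows essentially the paper's route in both directions: the forward implication exploits that an attractor contains the local unstable sets of its points, together with Proposition \ref{propHomoEntEvenSp} applied to $f\colon M\to M$, and the converse propagates the value $\ent(M)/2$ from the extremal basic sets to arbitrary points using local constancy of the unstable entropy (Corollary \ref{corEntCteLoc}) and the fact that orbits accumulate on attractors and repellers. Your sandwich $\ent(M)/2=\ent^+(W^u_\expc(p))\le\ent^+(\Lambda)\le\ent^+(M)=\ent(M)/2$ followed by Proposition \ref{propDefEnt} for the subsystem $f|_\Lambda$ is a slightly more explicit variant of the paper's direct application of Proposition \ref{propHomoEntEvenSp} to $f\colon\Lambda\to\Lambda$, and your heteroclinic-orbit propagation plays the role of the paper's choice of a nearby point whose forward orbit tends to an attractor.

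One point in your justification needs repair. In the converse you apply Proposition \ref{propHomoEntEvenSp} to the subsystem $f|_\Lambda$ on the grounds that basic sets are expansive with canonical coordinates; but that proposition also requires \emph{homogeneous entropy} as a hypothesis, which is exactly what is being proved and hence not available for free in this direction. For a basic set the missing hypothesis is supplied by Theorem \ref{thmCapMitad}, since $f|_\Lambda$ is transitive; you should invoke it explicitly (its proof is independent of the present proposition, so no circularity arises, even though it is stated later in the paper). A smaller remark: your invariance step $e^u(f(x))=e^u(x)$, argued via $W^u_\expc(f(x))\subset f(W^u_\expc(x))\subset W^u_{\lambda\expc}(f(x))$, tacitly uses that $\ent(W^u_r(x))$ is independent of the radius $r$, a fact the paper records only as a consequence of homogeneous entropy; this is at the same level of informality as the paper's own converse (which identifies plaque entropies with $\ent^+(W^u_\expc(\cdot))$ without comment), but a short covering argument based on canonical coordinates would make it airtight.
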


\begin{proof}
To prove the direct part
% Assume that $f$ is a topological Anosov homeomorphism with homogeneous entropy. 
suppose that $\Lambda$ is an attractor. 
In this case we have that $W^u_\expc(x)\subset\Lambda$ for all $x\in\Lambda$. 
If we apply Proposition \ref{propHomoEntEvenSp} to $f\colon M\to M$ and $f\colon\Lambda\to\Lambda$ 
we obtain 
\[
\azul{\ent(M)=2\ent^+(W^u_\expc(x))=\ent(\Lambda)}
\]
for all $x\in\Lambda$. 

To prove the converse assume that $\ent(M)=\ent(\Lambda)$ for every extremal basic set $\Lambda\subset\Omega(f)$.
Since $\ent(\Lambda')\leq\ent(M)$ for every basic set $\Lambda'$ we have that $\ent(\Lambda')\leq\ent(\Lambda)$ for every extremal $\Lambda$.
Given $x\in M$ there are $y\in B_\expc(x)$ and an attractor $\Lambda_a$ such that 
$\dist(f^n(x),\Lambda_a)\to 0$ as $n\to+\infty$.
Applying Corollary \ref{corEntCteLoc} we conclude that $\ent^+(W^u_\expc(x))=\ent^+(W^u_\expc(z))$ for $z\in\Lambda_a$.
\end{proof}

\begin{thm}
\label{thmCapMitad}
 If $f\colon M\to M$ is a transitive expansive homeomorphism with canonical coordinates
 of a compact metric space $M$ 
 then $f$ has homogeneous entropy.
%  and
%  $\dist$ is a self-similar metric 
% %  and $G(f)$ is connected 
%  then 
%  \[
%  \ucap(W^s_\epsilon(x),\dist)=\ucap(W^u_\epsilon(x),\dist)=\frac 12\ucap(M,\dist)  
%  \]
% for all $x\in M$ and every expansivity constant $\epsilon>0$.
\end{thm}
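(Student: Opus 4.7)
The plan is to show that the map $\phi(x):=\ent^+(W^u_\expc(x))$ is constant on $M$ (the analogous statement for $\ent^-(W^s_\expc(\cdot))$ follows by replacing $f$ with $f^{-1}$). Since $W^u_\expc(x)$ is unstable, one has $\ent(W^u_\expc(x))=\phi(x)$, so constancy of $\phi$ yields homogeneous entropy.

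\textbf{Step 1 (Local constancy).} If $x,y$ belong to the same product box $C=C_r(p)$, then $\phi(x)=\phi(y)$. The key reduction is the identification $\phi(x)=\ent^+(P^u_x)$, where $P^u_x$ is the unstable plaque of $C$ through $x$: on the one hand $P^u_x\subseteq W^u_\expc(x)$ once $r$ is small enough, yielding $\ent^+(P^u_x)\leq\phi(x)$; on the other hand $W^u_\expc(x)$ agrees, up to the holonomy $[x,\cdot]$ and finitely many forward iterates, with a plaque of a product box of diameter $\expc$ around $x$, yielding the reverse inequality. Corollary~\ref{corEntCteLoc} then forces $\ent^+(P^u_x)$ to be independent of $x\in C$.

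\textbf{Step 2 ($f$-monotonicity).} From the definition of local unstable sets one checks $W^u_\expc(f(x))\subseteq f(W^u_\expc(x))$; combined with the easily verified identity $\ent^+(f(X))=\ent^+(X)$ for every $X\subseteq M$ (a shift-of-index in the definition of $\ent^+$), this gives
\[
\phi(f(x))=\ent^+(W^u_\expc(f(x)))\leq\ent^+(f(W^u_\expc(x)))=\ent^+(W^u_\expc(x))=\phi(x).
\]

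\textbf{Step 3 (Transitivity).} Cover $M$ by finitely many product boxes $C_1,\dots,C_p$; by Step 1, $\phi\equiv v_i$ on $C_i$. For any $i,j$, transitivity yields $n\geq 0$ and $u\in C_i$ with $f^n(u)\in C_j$, so Step 2 gives $v_j=\phi(f^n(u))\leq\phi(u)=v_i$. Swapping the roles of $i$ and $j$ yields the opposite inequality, so all $v_i$ coincide and $\phi$ is constant on $M$.

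\textbf{Main obstacle.} The delicate point is the identification $\phi(x)=\ent^+(P^u_x)$ in Step 1. The inclusion $P^u_x\subseteq W^u_\expc(x)$ is immediate for small $r$, but the reverse direction is more subtle: it requires either a holonomy argument (the map $[x,\cdot]$ conjugates the forward dynamics up to exponentially small errors in the stable direction, hence preserves $\ent^+$) or a forward-iteration argument that leans on the self-similar contraction of stable distances from Proposition~\ref{propEstContrae}. This is what allows the intrinsic invariant $\phi(x)$ to be matched with the extrinsic product-box invariant $\ent^+(P^u_x)$ supplied by Corollary~\ref{corEntCteLoc}.
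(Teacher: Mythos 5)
Your Steps 2 and 3 are sound (the inclusion $W^u_\expc(f(x))\subseteq f(W^u_\expc(x))$ and the invariance $\ent^+(f(X))=\ent^+(X)$ are used at the same level of rigor as in the paper, which freely uses $\ent^-(P_x)=\ent^-(f^n(P_x))$), but Step 1 --- which you yourself flag as the main obstacle --- is a genuine gap, and the tools you cite do not close it. The problematic half is $\ent^+(W^u_\expc(x))\leq\ent^+(P^u_x)$: this compares an unstable set of the \emph{fixed} size $\expc$ with an arbitrarily small unstable plaque at the \emph{same} point. Corollary \ref{corEntCteLoc} (via Proposition \ref{propCapCajaPlaca}) only compares plaques of one and the same product box, i.e.\ unstable pieces of comparable size based at \emph{different} points; and any iteration argument moves the base point, since $f^{-k}(W^u_\expc(x))=W^u_{\expc/\lambda^k}(f^{-k}(x))$, so it relates $\phi(x)$ to small unstable sets at $f^{-k}(x)$, not at $x$. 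The scale-independence you need is exactly what the paper can prove only \emph{after} homogeneous entropy is established (the proposition immediately following that definition), so assuming it inside the local Step 1 is circular. Note also that a ``product box of diameter $\expc$ around $x$'' is not available for free: boxes are built from the bracket, which canonical coordinates provide only below the scale $\delta$ associated to $\expc$, so the holonomy you invoke between $W^u_\expc(x)$ and a small plaque is not supplied by Lemma \ref{lemHoloPIso} either.

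The paper's proof is organized precisely to avoid any same-point, cross-scale comparison. Fix $p,q$ and boxes $C_p,C_q$ having $W^s_{\epsilon_p}(p)$ and $W^s_{\epsilon_q}(q)$ as plaques, take a point $x\in C_p$ with dense orbit, and let $P_x$ be its stable plaque in $C_p$; then $\ent^-(W^s_{\epsilon_p}(p))=\ent^-(P_x)=\ent^-(f^n(P_x))$, and for suitable $n$ the forward iterate $f^n(P_x)$ (which is contracted) is \emph{contained} in the stable plaque $Q_x$ of $C_q$, so monotonicity plus Corollary \ref{corEntCteLoc} give $\ent^-(W^s_{\epsilon_p}(p))\leq\ent^-(Q_x)=\ent^-(W^s_{\epsilon_q}(q))$; the reverse inequality comes from swapping the roles of $p$ and $q$, never from matching scales at a single point. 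You can keep your unstable/$\ent^+$ formulation by iterating backwards instead of forwards, but you should replace Step 1 by this two-point comparison (plaque of the dense-orbit point, iterate until it sits inside a plaque of the target box, apply Corollary \ref{corEntCteLoc} at both ends). As written, the identification $\phi(x)=\ent^+(P^u_x)$ is unproven and your argument does not go through.
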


\begin{proof}
% First we will show that 
% $\ucap(W^s_\epsilon(x),\dist)$ and $\ucap(W^u_\epsilon(x),\dist)$ do not depend on $x\in M$ and the expansivity constant $\epsilon$.
 Take $p,q\in M$ and
 consider $\epsilon_p,\epsilon_q>0$. 
 Let $C_p,C_q$ be boxes around $p,q$ respectively such that 
 $W^s_{\epsilon_p}(p)$ is a plaque of $C_p$ and
 $W^s_{\epsilon_q}(q)$ is a plaque of $C_q$. 
 Take $x\in M$ with orbit dense in $M$. 
 Assume that $x\in C_p$ and denote by $P_x$ the stable plaque of $x$ in $C_p$. 
 Take $n\geq 0$ such that $f^n(P_x)\subset C_q$. 

 We have that 
 $\ent^-(P_x)=\ent^-(f^n(P_x))$. 
 Let $Q_x$ be the stable plaque of $f^n(x)$ in $C_q$. 
 Since $f^n(P_x)\subset Q_x$ we have that 
 $\ent^-(f^n(P_x))\leq\ent^-(Q_x)$. 
 By Corollary \ref{corEntCteLoc} we have that $\ent^-(Q_x)=\ent^-(W^s_{\epsilon_q}(q))$. 
 Then, we have proved that $$\ent^-(W^s_{\epsilon_p}(p))\leq\ent^-(W^s_{\epsilon_q}(q)).$$ 
 Analogously, we can prove the converse inequality and the proof ends. 
\end{proof}

\subsection{Capacity}
\label{secCap}
Given $X\subset M$ the \emph{capacity}
% \footnote{In the literature this concept of capacity 
% may be found with the following names: box (ball, box-counting) dimension, entropy dim., limit capacity, Kolmogorov dim. (or capacity), Minkowski-Bouligand dim. or capacity. 
% We simply use \emph{capacity} as is the name we learned from \cite{Fa}.}
is defined as 
\begin{equation}
 \label{ecuCapDf}
  \ucap(X,\dist)=\lim_{\epsilon\to 0} \frac{\log(\cov_\epsilon(X,\dist))}{-\log(\epsilon)}
\end{equation}
whenever this limit exists.
The following result is based on \cite{Fa}*{Theorem 5.3} where an inequality 
is proved.

\begin{thm}
\label{teoEntCap}
 If $f$ is expansive and $\dist$ is a self-similar metric with expanding factor $\lambda$ then 
 the limit (\ref{ecuCapDf}) exists and
 \begin{equation}
  \label{ecuCapEntLogL}
  \ucap(X,\dist)=\frac{\ent(X)}{\log(\lambda)}
 \end{equation}
 for every $X\subseteq M$.
\end{thm}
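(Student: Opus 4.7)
The plan is to reduce the capacity limit, which is defined using the metric $\dist$ at small scales, to the entropy limit, which is defined using the dynamical metrics $\dist_n^f$ at a fixed scale $\expc$. The bridge is Proposition \ref{propSepHypMet}, which gives the exact identity
\[
\cov_{\expc/\lambda^k}(X,\dist) = \cov_\expc(X,\dist_k^f).
\]

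First I would compute the limit along the geometric subsequence $\epsilon_k = \expc/\lambda^k$. Taking logarithms we get $-\log(\epsilon_k) = k\log(\lambda) - \log(\expc)$, so
\[
\frac{\log\cov_{\epsilon_k}(X,\dist)}{-\log(\epsilon_k)} = \frac{\log\cov_\expc(X,\dist_k^f)}{k\log(\lambda) - \log(\expc)}.
\]
Since the entropy limit $\ent(X) = \lim_{k\to\infty} \tfrac1k\log\cov_\expc(X,\dist_k^f)$ exists and is finite, dividing numerator and denominator by $k$ and letting $k\to\infty$ yields $\ent(X)/\log(\lambda)$.

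Next I would extend this from the subsequence to the full limit as $\epsilon\to 0$ by monotonicity: $\cov_\epsilon(X,\dist)$ is non-increasing in $\epsilon$. Given any small $\epsilon>0$, choose $k=k(\epsilon)\geq 0$ with $\expc/\lambda^{k+1} \leq \epsilon < \expc/\lambda^k$, so $k\to\infty$ as $\epsilon\to 0$. Then
\[
\cov_{\expc/\lambda^k}(X,\dist) \leq \cov_\epsilon(X,\dist) \leq \cov_{\expc/\lambda^{k+1}}(X,\dist),
\]
while the denominator satisfies $k\log(\lambda)-\log(\expc) < -\log(\epsilon) \leq (k+1)\log(\lambda)-\log(\expc)$. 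Combining these, both the upper and lower bound for $\log\cov_\epsilon(X,\dist)/(-\log\epsilon)$ are of the form (subsequence quantity)$\cdot(1+o(1))$, and by the first step each converges to $\ent(X)/\log(\lambda)$. The squeeze gives the existence of the full limit and the stated equality.

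I do not expect any real obstacle: Proposition \ref{propSepHypMet} already encodes the self-similarity in exactly the form needed, converting the scale $\expc/\lambda^k$ into the dynamical time $k$. The only mild care needed is that the definition of $\cov_\expc(X,\dist_n^f)$ used in $\ent(X)$ involves the two-sided Bowen metric $\max_{|k|\leq n}$, and $k$ ranges over $k\geq 0$ in Proposition \ref{propSepHypMet}; but since the proposition is stated (and proved) for this very two-sided version of $\dist_k^f$, this matches without any adjustment. Thus the proof is essentially an exact rewriting together with a standard monotonicity squeeze.
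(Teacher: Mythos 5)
Your proposal is correct and follows essentially the same route as the paper: the paper's proof also brackets $\epsilon$ between $\expc/\lambda^{n(\epsilon)+1}$ and $\expc/\lambda^{n(\epsilon)}$, applies Proposition \ref{propSepHypMet} to convert $\cov_{\expc/\lambda^k}(X,\dist)$ into $\cov_\expc(X,\dist_k^f)$, and concludes by the same squeeze. The only difference is organizational (you treat the geometric subsequence first and then interpolate), and your remark about the two-sided Bowen metric matching the paper's definition of $\ent(X)$ is exactly right.
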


\begin{proof}
Given $\epsilon>0$ small consider a positive integer  $n(\epsilon)$ such that
$$\expc/\lambda^{n(\epsilon)+1}\leq\epsilon<\expc/\lambda^{n(\epsilon)}.$$
Then 
\[
\cov_{\expc/\lambda^{n(\epsilon)+1}}(X,\dist)\geq
\cov_\epsilon(X,\dist)\geq
\cov_{\expc/\lambda^{n(\epsilon)}}(X,\dist)
\]
and 
\[
\frac{\log(\cov_{\expc/\lambda^{n(\epsilon)+1}}(X,\dist))}
{-\log(\expc/\lambda^{n(\epsilon)})}
\geq
\frac{\log(\cov_\epsilon(X,\dist))}
{-\log(\epsilon)}
\geq
\frac{\log(\cov_{\expc/\lambda^{n(\epsilon)}}(X,\dist))}
{-\log(\expc/\lambda^{n(\epsilon)+1})}.
\]
% By Proposition \ref{propSpaSepCov} we know that 
% $\spa_{\expc/\lambda^k}(X,\dist)\leq\sep_{\expc/\lambda^k}(X,\dist)$. 
By Proposition \ref{propSepHypMet} we have that $\cov_{\expc/\lambda^k}(X,\dist)=\cov_\expc(X,\dist_{k}^f)$. 
Then
\[
\frac{\log(\cov_{\expc}(X,\dist^f_{n(\epsilon)+1}))}
{n(\epsilon)\log\lambda-\log(\expc)}
\geq
\frac{\log(\cov_\epsilon(X,\dist))}
{-\log(\epsilon)}
\geq
\frac{\log(\cov_{\expc}(X,\dist^f_{n(\epsilon)}))}
{(n(\epsilon)+1)\log\lambda-\log(\expc)}
\]
Since $\lim_{\epsilon\to 0} n(\epsilon)=+\infty$
we conclude that
$$\frac{\ent(X)}{\log(\lambda)}\geq\ucap(X,\dist)\geq\frac{\ent(X)}{\log(\lambda)}$$
which proves the result.
\end{proof}

\begin{prop}
\label{propCapsu}
Assume that $f\colon M\to M$ is an expansive homeomorphism with self-similar metric $\dist$ and expanding factor $\lambda$. 
For every unstable set $X$ and every stable set $Y$ it holds that:
\[
 \ucap(X,\dist)=\frac{\ent^+(X)}{\log(\lambda)}=\frac{\ent(X)}{\log(\lambda)}
\]
and 
\[
 \ucap(Y,\dist)=\frac{\ent^-(Y)}{\log(\lambda)}=\frac{\ent(Y)}{\log(\lambda)}.
\]
\end{prop}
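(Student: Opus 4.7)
\medskip

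\noindent\textbf{Proof proposal for Proposition~\ref{propCapsu}.}
The first equality in each line is what already follows from Theorem~\ref{teoEntCap} (since that theorem applies to \emph{every} $X\subseteq M$), so the content of the proposition is really the equalities $\ent^+(X)=\ent(X)$ for unstable $X$ and $\ent^-(Y)=\ent(Y)$ for stable $Y$. The plan is to reduce each of these to the identity $\cov_{\expc/\lambda^k}(\,\cdot\,,\dist)=\cov_\expc(\,\cdot\,,\dist^+_k)$ of equation~(\ref{ecuCovMas}), which holds under the extra hypothesis $\diam(f^{-n}(X))\leq\expc$ for every $n\geq 0$. Throughout, I only discuss the unstable case; the stable case is identical with the roles of $f$ and $f^{-1}$ reversed.

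\smallskip

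The inequality $\ent^+(X)\leq\ent(X)$ is automatic from $\dist^+_n\leq\dist^f_n$. For the converse, first reduce to the case where $\diam(f^{-n}(X))\leq\expc$ for all $n\geq 0$: since $X$ is unstable, there is $N\geq 0$ with $\diam(f^{-n}(X))\leq\expc$ for all $n\geq N$, so $Y:=f^N(X)$ satisfies the required condition and is itself unstable. A standard index-shift in $n$ shows that $\ent(Y)=\ent(X)$ (apply $f^N$ to a $\dist^f_{n+N}$-cover to get a $\dist^f_n$-cover of $Y$, and vice versa using $f^{-N}$), and the one-sided analogue applied to $f^N$ gives $\ent^+(Y)\leq \ent^+(X)$.

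\smallskip

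For the reduced set $Y$, equation~(\ref{ecuCovMas}) together with Proposition~\ref{propSepHypMet} yields
\[
\cov_\expc(Y,\dist^+_n)\;=\;\cov_{\expc/\lambda^n}(Y,\dist)\;=\;\cov_\expc(Y,\dist^f_n),
\]
so $\ent^+(Y)=\ent(Y)$. Chaining the inequalities,
\[
\ent(X)\;=\;\ent(Y)\;=\;\ent^+(Y)\;\leq\;\ent^+(X)\;\leq\;\ent(X),
\]
which forces equality throughout. Combined with Theorem~\ref{teoEntCap}, this yields both displayed equalities of the proposition for unstable $X$, and the symmetric argument for $\dist^-_n$ (using that $f^{-N}(Y)$ meets the stable analogue of the diameter condition) handles stable $Y$.

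\smallskip

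I do not expect any serious obstacle: the whole point is that for a self-similar metric the backward orbits of an unstable set are governed by Proposition~\ref{propEstContrae}, so the backward-time information in $\dist^f_n$ is already dominated by the present-time distance and does not enlarge covers. The only mildly technical ingredient is the translation step $X\mapsto f^N(X)$ needed to bring $X$ within the hypothesis of~(\ref{ecuCovMas}); this is a routine invariance-of-entropy check.
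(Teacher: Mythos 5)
Your overall strategy is the same as the paper's, whose proof consists of the single sentence that the argument is ``analogous to that of Theorem~\ref{teoEntCap} using Equation~(\ref{ecuCovMas})''; your contribution is to make explicit the reduction to the hypothesis of~(\ref{ecuCovMas}), and it is precisely there that the argument as written breaks. With the paper's convention (unstable means $\diam(f^n(X))\to 0$ as $n\to-\infty$, so $\diam(f^{-n}(X))\leq\expc$ for all $n\geq N$), the set $Y:=f^N(X)$ does \emph{not} satisfy $\diam(f^{-n}(Y))\leq\expc$ for all $n\geq 0$: for $0\leq n<2N$ one has $f^{-n}(Y)=f^{N-n}(X)$, which for $n<N$ is a \emph{forward} iterate of the unstable set $X$ and typically has diameter larger than $\diam(X)$ (take $X=W^u_\expc(x)$). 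So~(\ref{ecuCovMas}) cannot be applied to your $Y$. The correct choice is $Y:=f^{-N}(X)$ (and, symmetrically, $f^{N}(Y)$ rather than $f^{-N}(Y)$ in the stable case).

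Once the sign is fixed a second issue appears: the inequality $\ent^+(Y)\leq\ent^+(X)$ that closes your chain is no longer the ``easy'' one-sided index shift. For $Y=f^{-N}(X)$ the cheap shift (push a $\dist^+_{n+N}$-cover of $Y$ forward by $f^N$) gives $\ent^+(X)=\ent^+(f^N(Y))\leq\ent^+(Y)$, i.e.\ the \emph{wrong} direction, because pulling a cover of $X$ back by $f^{-N}$ introduces $N$ backward iterates into $\dist^+_n$ that are not controlled by $\dist^+_{n-N}(u,v)$ alone. To get $\ent^+(f^{-N}(X))\leq\ent^+(X)$ you need the Lipschitz property of $f^{-1}$ (Remark~\ref{rmkLips}) to bound $\dist^+_n(f^{-N}u,f^{-N}v)\leq\lambda^N\dist^+_{n-N}(u,v)$, hence $\cov_\expc(Y,\dist^+_n)\leq\cov_{\expc/\lambda^N}(X,\dist^+_{n-N})$, together with the fact that $\ent^+$ does not depend on the choice of the constant $\expc$. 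This is routine but not automatic, and your proposal does not supply it. The remaining ingredients --- the trivial inequality $\ent^+\leq\ent$, the invariance $\ent(f^{-N}(X))=\ent(X)$, and the combination of~(\ref{ecuCovMas}) with Proposition~\ref{propSepHypMet} to get $\ent^+(Y)=\ent(Y)$ --- are correct, and with the two repairs above the proof is complete and matches the paper's intent.
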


\begin{proof}
 It is analogous to the proof of Theorem \ref{teoEntCap} using Equation (\ref{ecuCovMas}).
\end{proof}

In this case it is easy to see that 
$\ent^-(X)=\ent^+(Y)=0$.

\begin{cor}
\label{corCapCteLoc}
 If $P^u_1$ and $P^u_2$ are unstable plaques of a common product box $C$ then 
 $\ucap(P^u_1,\dist)=\ucap(P^u_2,\dist)$. 
 Analogous for stable plaques.
\end{cor}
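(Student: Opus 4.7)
The plan is to chain together two results that have already been established in the paper. The statement concerns the capacity of unstable (respectively stable) plaques, and capacity has just been expressed in terms of one-sided entropy by Proposition \ref{propCapsu}, while equality of one-sided entropies for plaques of a common product box is the content of Corollary \ref{corEntCteLoc}.

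First I would observe that an unstable plaque $P^u$ of $C = C_r(p)$ is an unstable set in the sense defined before Equation (\ref{ecuCovMas}): indeed $P^u = [W^s_r(p),\{y\}] \subset W^u_{\text{something}}(y)$ for a suitable point $y$, so the diameters $\diam(f^{-n}(P^u))$ decay to $0$ by Proposition \ref{propEstContrae} and self-similarity of $\dist$. This qualifies $P^u$ as an input to Proposition \ref{propCapsu}, which then yields
\[
 \ucap(P^u_i,\dist) = \frac{\ent^+(P^u_i)}{\log(\lambda)}, \qquad i=1,2.
\]
Next I would invoke Corollary \ref{corEntCteLoc}, which gives $\ent^+(P^u_1) = \ent^+(P^u_2)$ directly from the fact that both plaques sit inside the same product box $C$. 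Dividing by $\log(\lambda)$ gives $\ucap(P^u_1,\dist) = \ucap(P^u_2,\dist)$.

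For the stable case the argument is identical after swapping the roles: a stable plaque $P^s$ is a stable set, so Proposition \ref{propCapsu} identifies $\ucap(P^s_i,\dist)$ with $\ent^-(P^s_i)/\log(\lambda)$, and the stable-plaque half of Corollary \ref{corEntCteLoc} gives $\ent^-(P^s_1) = \ent^-(P^s_2)$.

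There is no real obstacle here, since both of the key ingredients are already in hand; the only small point requiring care is the verification that plaques genuinely fall under the unstable/stable set hypothesis of Proposition \ref{propCapsu}, but this is immediate from the contraction of local unstable/stable sets under iteration recorded in Proposition \ref{propEstContrae}.
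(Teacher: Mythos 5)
Your proof is correct and follows exactly the paper's own route: Proposition \ref{propCapsu} converts capacity to one-sided entropy divided by $\log(\lambda)$, and Corollary \ref{corEntCteLoc} equates those entropies for plaques of a common box. The extra check that a plaque is an unstable (resp.\ stable) set via Proposition \ref{propEstContrae} is a reasonable addition the paper leaves implicit.
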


\begin{proof}
It follows by Corollary \ref{corEntCteLoc}
and Proposition \ref{propCapsu}.
\end{proof}

\subsection{Expanding factors}
The purpose of this section is to study the set of expanding factors $\lambda$ 
of a self-similar metric 
that can be associated to a given expansive homeomorphism $f\colon M\to M$. 
Note that if there is a self-similar metric $\dist$ with expanding factor $\lambda$ 
then the metric $\dist'(x,y)=[\dist(x,y)]^\alpha$, $0<\alpha<1$, is  
self-similar with expanding factor $\lambda^\alpha<\lambda$. 
This means that the set of expanding factors is an interval. 

Let $\lambda_{\sup}(f)\in(1,+\infty]$ be such that 
every $\lambda<\lambda_{\sup}$ is the expanding factor of a self-similar metric for $f$, 
and every $\lambda>\lambda_{\sup}$ is not.

\begin{rmk}
The metric defined in Example \ref{exSomeAnosov} 
has expanding factor $\lambda_{\sup}$.
\end{rmk}

The following result characterizes the systems with $\lambda_{\sup}$ infinite.
The topological dimension \cite{HW} of $M$ will be denote as $\dim(M)$.
In \cite{HW} it is shown that $\dim(M)\leq\ucap(M,\dist)$ for every metric $\dist$. 

\begin{prop}
 $\lambda_{\sup}=+\infty$ if and only if $M$ is totally disconnected.
\end{prop}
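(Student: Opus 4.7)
The plan is to handle the two directions separately. For the implication $M$ totally disconnected $\Rightarrow \lambda_{\sup} = +\infty$, I would realise $f$ as a subshift and then quote Example \ref{exSSMShift}. Since a compact totally disconnected metric space has a basis of clopen sets, compactness together with the expansive constant $\expc$ produces a finite clopen partition $\{A_1,\dots,A_N\}$ of $M$ with $\diam(A_i)<\expc$. The itinerary map $\pi\colon M\to\{0,\dots,N-1\}^\Z$ sending $x$ to the sequence $n\mapsto i(n)$ with $f^n(x)\in A_{i(n)}$ is continuous (the partition is clopen) and injective (expansivity together with $\diam(A_i)<\expc$), so it is a homeomorphism onto a closed shift-invariant subset $X$ conjugating $f$ to $\sigma|_X$. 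For any $\lambda>1$, I would transport the shift metric $\dist(a,b)=\lambda^{-T(a,b)}$ from Example \ref{exSSMShift} through $\pi^{-1}$; conjugation preserves the functional equation (\ref{ecuConforme}) verbatim, so the resulting compatible metric on $M$ is self-similar with expanding factor $\lambda$. Since $\lambda$ is arbitrary, $\lambda_{\sup}=+\infty$.

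For the converse, the idea is to combine Theorem \ref{teoEntCap} with the Hurewicz--Wallman inequality $\dim(M)\leq\ucap(M,\dist)$ recalled just before the statement. If $\lambda_{\sup}=+\infty$, then for every $\lambda>1$ some self-similar compatible metric $\dist_\lambda$ realises $\lambda$ as expanding factor, and
\[
\dim(M)\;\leq\;\ucap(M,\dist_\lambda)\;=\;\frac{\ent(f)}{\log\lambda}.
\]
Expansive homeomorphisms of compact metric spaces have finite topological entropy (a standard consequence of the existence of a finite generator), so the right-hand side tends to $0$ as $\lambda\to+\infty$, forcing $\dim(M)=0$. Since compact metric spaces of zero topological dimension are precisely the totally disconnected ones, $M$ is totally disconnected.

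The step I expect to be least automatic is producing the clopen generator in the backward direction: one must check that in a compact zero-dimensional metric space every finite open cover can be refined to a finite pairwise disjoint clopen cover (choose a clopen neighbourhood of each point sitting inside some cover element, extract a finite subcover, and disjointify by subtracting earlier members). This is the only ingredient not already packaged as a lemma of the paper; the forward direction is by contrast a direct chain of citations once one remarks that $\ent(f)<+\infty$.
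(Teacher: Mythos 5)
Your proof is correct and follows essentially the same route as the paper: the forward direction realises $f$ as a subshift and transports the metric of Example \ref{exSSMShift}, and the converse combines Theorem \ref{teoEntCap} with the inequality $\dim(M)\leq\ucap(M,\dist)$ to bound every expanding factor by $e^{\ent(f)/\dim(M)}$. The only difference is presentational: where you construct the clopen itinerary conjugacy by hand, the paper simply cites \cite{KR}*{Corollary 2.9}.
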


\begin{proof}
% Let us show that if $\dim(M)=0$ then for every $\lambda>1$ there is a self-similar metric with expanding factor $\lambda$.
 Assume that $M$ is totally disconnected. 
 From \cite{KR}*{Corollary 2.9} we known that $f$ is conjugate with a subshift. 
 Recall that in Example \ref{exSSMShift} we gave a self-similar metric for 
 the shift homeomorphism with an arbitrary expanding factor $\lambda>1$.
 
 If $M$ is not totally disconnected then the topological dimension of $M$ 
 is positive.
Applying Theorem \ref{teoEntCap} we have
\begin{equation}
\label{eqDimTop}
\dim(M)\log(\lambda)\leq \ent(f)
\end{equation}
for every expanding factor $\lambda$ of a hyperbolic self-similar metric.
This implies 
$$\lambda\leq e^{\ent(f)/\dim(M)}$$
and $\lambda_{\sup}$ is finite.
\end{proof}

\begin{rmk}
 As noticed in \cite{Fa}, Equation (\ref{eqDimTop}) implies that: 1) if a compact metric space admits an expansive homeomorphism then 
its topological dimension is finite (a result first proved by Mañé \cite{Ma}) and 2) if $\dim(M)>0$ then 
every expansive homeomorphism of $M$ has positive topological entropy.
\end{rmk}

In what follows assume that $\dim(M)>0$.
Define the \emph{ideal expanding factor} of $f$ as
$$\lideal=e^{\ent(f)/\dim(M)}.$$
Obviously, if $\lambda$ is an expanding factor then $\lambda\leq\lideal$.

\begin{ex}[Pseudo-Anosov maps again]
On surfaces every expansive homeomorphism admits a self-similar metric with ideal expanding factor.
This is because the metric that we constructed in Example \ref{exSSSurfaces} 
expands with the factor associated to the transverse measures. 
Since the topological entropy of a pseudo-Anosov map is $2\log(\lambda)$ 
and the topological dimension of a surface is $2$, the stretching factor of a pseudo-Anosov 
diffeomorphism is $\lideal$. 
\end{ex}

% The following result was proved in \cite{Simmons}*{Theorem 4.1} for two-dimensional hyperbolic toral automorphism. 
% We extend this result for pseudo-Anosov diffeomorphisms and we do not assume that $K$ is $f$-invariant.
% 
% \begin{prop}
%  If $f\colon M\to M$ is an expansive homeomorphism of a compact surface and $\dist$ is a self-similar metric 
%  with expanding factor $\lambda=\lideal$ then
%  \[
%   \ucap(K,\dist)=\frac{\ent(K)}{\log(\lambda)}
%  \]
% for every $K\subset M$.
% \end{prop}
% 
% \begin{proof}
%  It is direct from Thereom \ref{teoEntCap}. 
% %  noting that the restriction of a self-similar metric is self-similar with the 
% %  same expanding factor.
% \end{proof}

It is of interest for the next result to remark that an expansive axiom A diffeomorphism may not be Anosov, 
for example there are quasi-Anosov diffeomorphisms that are not Anosov \cite{FR}. 
Quasi-Anosov diffeomorphisms are known to be axiom A and expansive.

\begin{thm}
\label{thmIdeFactTrans}
Let $f\colon M\to M$ be an expansive homeomorphism of a compact connected manifold 
with self-similar metric $\dist$. 
If $f\colon\Omega(f)\to\Omega(f)$ has ideal expanding factor then $\Omega(f)$ has non-empty interior.
If in addition $f$ is an axiom A diffeomorphism then 
$f$ is a transitive Anosov diffeomorphism and the dimension of stable and unstable manifolds coincide.
\end{thm}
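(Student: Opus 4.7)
The plan is to leverage Theorem \ref{teoEntCap} and standard dimension theory in three steps.

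\textbf{Step 1 (non-empty interior).} I read the hypothesis ``$f\colon\Omega(f)\to\Omega(f)$ has ideal expanding factor'' as saying that the expanding factor $\lambda$ of $\dist$ equals the ideal expanding factor of the restricted system, that is $\lambda=e^{\ent(f|_{\Omega(f)})/\dim(\Omega(f))}$. By Bowen's classical identity $\ent(f)=\ent(f|_{\Omega(f)})$ for expansive homeomorphisms, this gives $\log\lambda=\ent(f)/\dim(\Omega(f))$. Applying Theorem \ref{teoEntCap} to the whole space,
\[
\ucap(M,\dist)=\frac{\ent(f)}{\log\lambda}=\dim(\Omega(f)).
\]
The Hurewicz--Wallman estimate $\dim(M)\le\ucap(M,\dist)$, combined with the trivial $\dim(\Omega(f))\le\dim(M)$, forces $\dim(\Omega(f))=\dim(M)=n$. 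The classical fact that a closed subset of an $n$-manifold whose topological dimension equals $n$ must have non-empty interior then yields the first conclusion.

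\textbf{Step 2 (transitive Anosov).} Assume in addition that $f$ is axiom A. The spectral decomposition writes $\Omega(f)=\Lambda_1\cup\dots\cup\Lambda_k$ as a disjoint union of compact basic sets. Since $\inte_M\Omega(f)\neq\emptyset$ and the pieces $\inte_M\Omega(f)\cap\Lambda_i$ are closed in $\inte_M\Omega(f)$, Baire category picks out some $\Lambda_i$ with non-empty interior in $M$. The classical fact that a basic set of an axiom A diffeomorphism with non-empty interior in a connected manifold must coincide with the whole manifold (hyperbolicity and local product structure promote the interior to the whole $\Lambda_i$, making it clopen, and connectedness of $M$ then forces $\Lambda_i=M$) gives $\Omega(f)=M$. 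Hence $f$ is Anosov, and the intrinsic transitivity of $\Lambda_i$ is the transitivity of $f$.

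\textbf{Step 3 (equal dimensions).} With $f$ transitive expansive with canonical coordinates, Theorem \ref{thmCapMitad} provides homogeneous entropy and Proposition \ref{propHomoEntEvenSp} gives $\ent^+(W^u_\expc(x))=\ent^-(W^s_\expc(x))=\ent(f)/2$. Proposition \ref{propCapsu}, combined with $\log\lambda=\ent(f)/n$ from Step~1, yields
\[
\ucap(W^u_\expc(x),\dist)=\ucap(W^s_\expc(x),\dist)=\frac{\ent(f)/2}{\log\lambda}=\frac{n}{2}.
\]
Hurewicz--Wallman then gives $\dim W^u_\expc(x),\,\dim W^s_\expc(x)\le n/2$; since for an Anosov diffeomorphism the stable and unstable dimensions add to $n$, both must equal $n/2$.

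The delicate point is interpretive rather than computational: the hypothesis must be read intrinsically to the restricted system, so that $\log\lambda=\ent(f)/\dim(\Omega(f))$ rather than $\ent(f)/\dim(M)$. This is exactly what produces the reverse inequality $\dim(\Omega(f))\ge\dim(M)$ through the capacity identity, and without it the dimension-theoretic criterion for non-empty interior cannot be invoked. Once that reading is in place, the remainder is bookkeeping with the paper's capacity--entropy dictionary and standard axiom A structure.
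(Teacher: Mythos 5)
Your argument is correct and follows the paper's proof essentially step for step: Bowen's identity $\ent(f)=\ent(f|_{\Omega(f)})$ combined with Theorem \ref{teoEntCap} and the Hurewicz--Wallman inequality $\dim\leq\ucap$ gives $\dim(\Omega(f))=\dim(M)$ and hence non-empty interior, and the equal-dimension claim is obtained from Theorem \ref{thmCapMitad}, Proposition \ref{propHomoEntEvenSp} and Proposition \ref{propCapsu} exactly as in the paper. The only difference is cosmetic: where you sketch the ``transitive hyperbolic set with non-empty interior is the whole manifold'' step via Baire category and local product structure, the paper simply cites Fisher's theorem for that fact.
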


\begin{proof}
It is known \cite{Bo70} (see also \cite{HaKa}*{Equation (3.3.1)}) that the topological entropy of 
$f$ restricted to the non-wandering set equals the topological entropy 
of $f$ in $M$. 
Since $f$ has ideal expanding factor on $\Omega(f)$ we can apply Theorem \ref{teoEntCap} 
to conclude that $\dim(\Omega(f))=\dim(M)$.
From \cite{HW}*{Theorem IV 3} we know that $\Omega(f)$ has non-empty interior. 

If $f$ is axiom A we can apply \cite{Fisher} to conclude that 
$\Omega(f)=M$ and that 
$f$ is a transitive Anosov.
From Theorem \ref{thmCapMitad}, $f$ has homogeneous entropy. 
Applying Proposition \ref{propHomoEntEvenSp}
we have that
 \[
  \ent^+(W^u_\expc(x))=\ent^-(W^s_\expc(x))=\frac 12\ent(M)
 \]
for all $x\in M$. 
\azul{By Proposition \ref{propCapsu}} we conclude that
$$\ucap(W^u_\expc(x),\dist)=\ucap(W^s_\expc(x),\dist)=\frac{\dim(M)}2.$$ 
If $\dim(W^s(x))\neq\dim(W^u(x))$ then 
one of these numbers is strictly greater than $\dim(M)/2$. 
We arrive to a contradiction because the capacity is greater or equal than the dimension.
\end{proof}

As a consequence, an Anosov diffeomorphism of a three dimensional manifold 
does not admit a self-similar metric with ideal expanding factor.

\section{Holonomy on canonical coordinates}
\label{secHolonomy}

In this section we give some simple properties of pseudo isometries and the Hausdorff measure. 
Assuming the existence of canonical coordinates we show that holonomies are pseudo isometries, 
and consequently the Hausdorff measure is preserved by holonomy.
We also show that on a Peano continuum the condition of isometric holonomies 
implies the transitivity of the homeomorphism.

\subsection{Pseudo isometries and Hausdorff measure}

Given metric spaces $(X_i,\dist_i)$, $i=1,2$ we say that 
a homeomorphism $h\colon X_1\to X_2$ is a \emph{pseudo isometry} 
if for all $\epsilon>0$ there is $\delta>0$ such that 
if $0<\dist_1(x,y)<\delta$ then 
\[
 \left|\frac{\dist_2(h(x),h(y))}{\dist_1(x,y)}-1\right|<\epsilon.
\]
In this case, $X_1$ and $X_2$ are said to be \emph{pseudo isometric}.

Given a compact metric space $(X,\dist)$ and $r>0$
define $C_r(X,\dist)$ as the set of countable covers $\U$ of $X$ 
such that $\diam(U)<r$ for all $U\in \U$.
For $d>0$ define
\[
 \mu^d_r(X,\dist)=\inf_{\U\in C_r}\sum_{U\in\U} (\diam(U))^d
\]
and 
\[
 \mu^d(X,\dist)=\lim_{r\to 0}\mu^d_r(X,\dist).
\]

\begin{prop}
\label{propPisoHMeas}
 Let $h\colon (X_1,\dist_1)\to (X_2,\dist_2)$ be a pseudo isometry of compact metric spaces. 
 Then $\mu^d(X_1,\dist_1)=\mu^d(X_2,\dist_2)$ for all $d>0$.
\end{prop}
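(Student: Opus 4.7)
The plan is to transfer covers back and forth between $X_1$ and $X_2$ via $h$, using the pseudo isometry condition to control how diameters change, and then to invoke symmetry. The key observation is that a pseudo isometry is almost a $(1+\epsilon)$-bi-Lipschitz map on the scale $\delta$, which is exactly the kind of distortion that preserves $\mu^d_r$ up to a $(1+\epsilon)^d$ factor.

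First I would fix $\epsilon>0$ and choose $\delta>0$ from the pseudo isometry hypothesis so that
\[
 (1-\epsilon)\dist_1(x,y)\leq \dist_2(h(x),h(y))\leq (1+\epsilon)\dist_1(x,y)
\]
whenever $\dist_1(x,y)<\delta$. From this, if $U\subset X_1$ has $\diam_{\dist_1}(U)<r<\delta$, then $h(U)\subset X_2$ has $\diam_{\dist_2}(h(U))\leq (1+\epsilon)\diam_{\dist_1}(U)$. Given any cover $\U\in C_r(X_1,\dist_1)$, the image $h(\U)$ lies in $C_{(1+\epsilon)r}(X_2,\dist_2)$ (using that $h$ is a surjective homeomorphism), and
\[
 \sum_{V\in h(\U)}(\diam V)^d\leq (1+\epsilon)^d\sum_{U\in\U}(\diam U)^d.
\]
Taking infima gives $\mu^d_{(1+\epsilon)r}(X_2,\dist_2)\leq (1+\epsilon)^d\mu^d_r(X_1,\dist_1)$, and letting $r\to 0$ followed by $\epsilon\to 0$ yields $\mu^d(X_2,\dist_2)\leq\mu^d(X_1,\dist_1)$.

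For the reverse inequality I would verify that $h^{-1}$ is also a pseudo isometry, after which the same argument applied to $h^{-1}$ closes the proof. The inverse of $h$ is continuous on the compact set $X_2$, hence uniformly continuous, so for any $\delta>0$ there is $\delta'>0$ with $\dist_2(u,v)<\delta'\Rightarrow\dist_1(h^{-1}(u),h^{-1}(v))<\delta$. Combined with the bound on the ratio $\dist_2/\dist_1$ derived above, one inverts to get $\dist_1(h^{-1}(u),h^{-1}(v))/\dist_2(u,v)\in(1/(1+\epsilon),1/(1-\epsilon))$, which is within $\epsilon'$ of $1$ if $\epsilon$ was chosen small enough.

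There is no real obstacle here beyond careful $\epsilon$-bookkeeping; the only subtle point is checking that $h^{-1}$ inherits the pseudo isometry property, which hinges on the uniform continuity of $h^{-1}$ and hence on compactness of $X_2$. Everything else is a routine application of the definition of $\mu^d$ and of the limit as $r\to 0$.
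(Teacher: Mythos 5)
Your proposal is correct and follows essentially the same route as the paper: push covers forward through $h$, use the pseudo isometry bound to get the $(1+\epsilon)^d$ distortion of $\mu^d_r$, pass to the limits in $r$ and $\epsilon$, and obtain the reverse inequality by symmetry. The only difference is that you spell out why $h^{-1}$ is again a pseudo isometry (via uniform continuity on the compact space), a step the paper dismisses as ``analogous''; this is a welcome but minor addition.
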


\begin{proof}
 Given $\epsilon>0$ take $\delta>0$ such that 
 if $x,y\in X_1$ and $\dist_1(x,y)<\delta$ then 
 \[
  \left|\frac{\dist_2(h(x),h(y))}{\dist_1(x,y)}-1\right|<\epsilon.
 \]
 That is
 \[
  (1-\epsilon)\dist_1(x,y)<\dist_2(h(x),h(y))<(1+\epsilon)\dist_1(x,y).
 \]
Therefore, if $\diam_1(U)<\delta$ then 
$\diam_2(h(U))<(1+\epsilon)\diam_1(U)$.
If $r<\delta$ and $\U\in C_r(X_1,\dist_1)$ 
then $h(\U)\in C_{r(1+\epsilon)}(X_2,\dist_2)$. 
Then, for all $\U\in C_r(X_1,\dist_1)$ it holds that  
\[
 \sum_{U\in\U} \diam_2(h(U))^d<(1+\epsilon)^d\sum_{U\in\U} \diam_1(U)^d
\]
This implies that 
\[
 \mu^d_{r(1+\epsilon)}(X_2,\dist_2)\leq (1+\epsilon)^d\mu^d_r(X_1,\dist_1),
\]
and 
\[
 \mu^d(X_2,\dist_2)\leq (1+\epsilon)^d\mu^d(X_1,\dist_1).
\]
Since this holds for all $\epsilon>0$ we conclude that
$\mu^d(X_2,\dist_2)\leq \mu^d(X_1,\dist_1)$.
The other inequality is analogous.
\end{proof}

% {\bf aca} ya se puede ver la acción de $f$ en las medidas estables e inestables. 
% Definir la medida producto y probar su invariancia. 
% En la sección siguiente se prueba que es positiva en abiertos.

\subsection{Local form of the metric and transitivity}

Fix a product box $C$ and $p\in C$. 
For $x\in C$ define its coordinates relative to $p$ as
\[
 \begin{array}{l}
x^s=W^s_\expc(p)\cap W^u_\expc(x),\\
x^u=W^u_\expc(p)\cap W^s_\expc(x).
\end{array}
\]
Define a metric 
$\dist_p$ on $C$ by 
\[
 \dist_p(x,y)=\max\{\dist(x^s,y^s),\dist(x^u,y^u)\}
\]
for all $x,y\in C$.

\begin{lem}
If  $|1-x|<\sigma<1$ 
then  
\end{lem}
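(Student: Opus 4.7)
The statement as given in the excerpt is truncated, but from its placement immediately after the definition of the box metric $\dist_p$ and from the kind of comparison one needs between $\dist$ and $\dist_p$ in the next arguments, I expect the conclusion to be of the form $|1 - 1/x| \leq \sigma/(1-\sigma)$. This is exactly the elementary tool needed to flip a two-sided bound like $\dist_p(x,y)/\dist(x,y) \in (1-\epsilon,1+\epsilon)$ into the same kind of bound on the reciprocal ratio, which is what Theorem \ref{teoCajaPIso} and the pseudo-isometry condition for holonomies will ultimately require.

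Under that guess, the plan is a two-step calculation. First I would unwrap $|1-x|<\sigma<1$ via the reverse triangle inequality to obtain
\[
 1-\sigma < x < 1+\sigma,
\]
so in particular $x>1-\sigma>0$ and the reciprocal $1/x$ is defined and positive. Second I would write
\[
 1-\frac{1}{x} = \frac{x-1}{x}
\]
and take absolute values, using the lower bound on $x$ just established:
\[
 \left| 1 - \frac{1}{x} \right| \;=\; \frac{|x-1|}{x} \;<\; \frac{\sigma}{1-\sigma}.
\]

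There is no genuine obstacle: the hypothesis $\sigma<1$ is used only to keep the denominator $1-\sigma$ positive so that the conclusion is meaningful. If the author's actual conclusion is a minor variant — for instance a two-sided bound $(1-\sigma)^{-1}>1/x>(1+\sigma)^{-1}$, or a symmetric statement of the form $|1-1/x|<\sigma'$ with $\sigma'$ some explicit function of $\sigma$ — the same two steps (lower bound on $x$, then direct rewriting as $(x-1)/x$) will still suffice, possibly combined with the trivial upper bound $x<1+\sigma$ for the other side.
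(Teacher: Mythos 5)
Your guessed conclusion is exactly the paper's Equation (\ref{ecuCHota}), namely $\left|1-\frac 1x\right|<\frac\sigma{1-\sigma}$, and your argument (derive $0<1-\sigma<x$ from the hypothesis, then bound $|x-1|/x$) is precisely the paper's own proof, just written out in more detail. Correct, same approach.
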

\begin{equation}
 \label{ecuCHota}
 \left|1-\frac 1x\right|<\frac\sigma{1-\sigma}. 
\end{equation}
\begin{proof}
 We have that $1-x<\sigma<1$. 
 Then $0<1-\sigma<x$.
\end{proof}

\begin{lem}
\label{lemHoloPIso}
 If $f\colon M\to M$ is expansive and $\dist$ is self-similar 
 then the holonomy map on a product box $C\subset M$ is a pseudo isometry.
\end{lem}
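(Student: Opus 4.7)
The plan is to convert the pseudo-isometry inequality into an application of Theorem \ref{teoDynTri} at a carefully chosen iteration scale. Let $h\colon P_1\to P_2$ be a holonomy in the product box $C = C_r(p)$, with $P_i = [\{x_i\}, W^u_r(p)]$ and $h([x_1, b]) = [x_2, b]$. For $y_1, y_2 \in P_1$ write $y_i = [x_1, b_i]$ and $w_i = h(y_i) = [x_2, b_i]$, and set
\begin{equation*}
A = \dist(y_1, w_1),\quad B = \dist(w_1, w_2),\quad C = \dist(y_2, w_2),\quad D = \dist(y_1, y_2).
\end{equation*}
The Smale space identity $[[a,b],[c,d]] = [a,d]$ gives $[w_2, y_1] = w_1$ and $[y_1, w_2] = y_2$, so both $(y_1, w_2, w_1)$ and $(w_2, y_1, y_2)$ are dynamical triangles sharing the side $\dist(y_1, w_2)$. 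Applying Theorem \ref{teoDynTri} to each produces
\[
\max\{A, B\} \;\approx\; \dist(y_1, w_2) \;\approx\; \max\{C, D\},
\]
with error at most $\epsilon_0$ provided the triangle diameter lies below the threshold $\delta_0 = \delta_0(\epsilon_0)$ of Theorem \ref{teoDynTri}.

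The pairs $(y_1, y_2)$ and $(w_1, w_2)$ are stably related, while $(y_i, w_i)$ are unstably related (both in $W^u_\expc(b_i)$). Proposition \ref{propEstContrae}, combined with the backward direction of Equation (\ref{ecuConforme}) (in the spirit of Proposition \ref{propExpandeFut}), shows that under $f^{-n}$ the stable distances satisfy $\dist(f^{-n}(y_1),f^{-n}(y_2)) = \lambda^n D$ while the result remains below $\expc$, and likewise for $B$, whereas the unstable distances $A,C$ become $\lambda^{-n}A$ and $\lambda^{-n}C$. Assume without loss of generality $B\leq D$; given $\epsilon>0$, pick $\epsilon_0$ small enough that the bound (\ref{ecuCHota}) converts any $x$ with $|x-1|<\epsilon_0$ and $|1-1/x|<\epsilon_0/(1-\epsilon_0)$ into $|x-1|<\epsilon$, take the matching $\delta_0<\expc$, and set $n=\lfloor\log_\lambda(\delta_0/D)\rfloor$. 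After backward iteration the four distances become $\tilde B = \lambda^n B \leq \tilde D \leq \delta_0$, $\tilde D=\lambda^n D\in(\delta_0/\lambda,\delta_0]$, and $\tilde A,\tilde C \leq \lambda^{-n}\cdot\diam(C)\leq 2rD/\delta_0$, which is much smaller than $\tilde D$ once $D<\delta_0^2/(2r)$.

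At this scale the iterated triangles $(\tilde y_1,\tilde w_2,\tilde w_1)$ and $(\tilde w_2,\tilde y_1,\tilde y_2)$ still have diameter $\leq\delta_0$, so Theorem \ref{teoDynTri} yields
\[
\frac{\max\{\tilde A,\tilde B\}}{\max\{\tilde C,\tilde D\}} \in \left(\frac{1-\epsilon_0}{1+\epsilon_0},\;\frac{1+\epsilon_0}{1-\epsilon_0}\right).
\]
Since $\tilde A,\tilde C\ll\tilde D$, the denominator reduces to $\tilde D$; the same ratio bound then forces $\tilde B$ to dominate the numerator, for otherwise the left side would be $\ll 1$, contradicting the inequality. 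Hence $\tilde B/\tilde D\in((1-\epsilon_0)/(1+\epsilon_0),(1+\epsilon_0)/(1-\epsilon_0))$, and as $\tilde B/\tilde D = B/D$ the inequality (\ref{ecuCHota}) gives $|B/D - 1|<\epsilon$, which is the pseudo-isometry bound. The main technical obstacle is the scale bookkeeping: verifying that the single choice of $n$ simultaneously keeps every iterate below $\expc$ (so Proposition \ref{propEstContrae} and its backward analogue apply), keeps the triangle diameter below $\delta_0$, and drives $\tilde A,\tilde C$ far enough below $\tilde D$ that they may be dropped from the maxes.
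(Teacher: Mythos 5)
Your route is genuinely different from the paper's. The paper proves this lemma by a short direct computation that never invokes Theorem \ref{teoDynTri}: for $p,q$ in an unstable plaque and $\pi$ the holonomy along stable plaques, the triangle inequality gives $|\dist(f^np,f^nq)-\dist(f^n\pi(p),f^n\pi(q))|\leq \dist(f^np,f^n\pi(p))+\dist(f^nq,f^n\pi(q))\leq 2\expc/\lambda^n$, since the transverse legs lie in stable sets and contract exactly; evaluating at the time $m$ determined by $\expc/\lambda^{m+1}<\max\{\dist(p,q),\dist(\pi(p),\pi(q))\}\leq\expc/\lambda^m$ and dividing back by $\lambda^m$ (exact expansion along the plaque) yields $\bigl|\dist(\pi(p),\pi(q))/\dist(p,q)-1\bigr|\leq 2/(\lambda^{m-1}-2)$. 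You instead rescale by $f^{-n}$ and apply Theorem \ref{teoDynTri} twice, via the bracket identities, to the triangles $(y_1,w_2,w_1)$ and $(w_2,y_1,y_2)$. This is not circular (Theorem \ref{teoDynTri} is proved in \S\ref{secExpSelfSim} independently of this lemma), and your identification of the two dynamical triangles and of the exact scaling $\tilde B=\lambda^nB$, $\tilde D=\lambda^nD$, $\tilde A=\lambda^{-n}A$, $\tilde C=\lambda^{-n}C$ is sound; but it imports the heaviest estimate of the paper to prove what is really an elementary consequence of the triangle inequality plus Proposition \ref{propEstContrae}. (That you work with stable plaques and unstable holonomy legs rather than the reverse is immaterial by symmetry.)

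The one genuine gap is the step ``assume without loss of generality $B\leq D$.'' The hypothesis of the pseudo-isometry condition is only that $D=\dist(y_1,y_2)$ is small; if in fact $B>D$, your choice $n=\lfloor\log_\lambda(\delta_0/D)\rfloor$ may give $\lambda^kB>\expc$ for some $k\leq n$, at which point the exact backward expansion of the stable pair $(w_1,w_2)$ breaks down, the identity $\tilde B=\lambda^nB$ fails, and the rescaled triangles need not have diameter below $\delta_0$, so Theorem \ref{teoDynTri} no longer applies. Handling that case ``by symmetry'' (running the argument for $h^{-1}$ at the pair $(w_1,w_2)$) requires knowing in advance that $B$ is small whenever $D$ is, i.e.\ uniform continuity of the holonomy --- a fact you never establish. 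It is true and easily supplied: argue as in Remark \ref{rmkNoObtuso}, taking pairs with $\dist(y_1^k,y_2^k)\to0$ but $\dist(w_1^k,w_2^k)$ bounded away from zero, and note that limit points of $w_1^k,w_2^k$ would lie in a common local stable and a common local unstable set, contradicting expansivity. With that lemma inserted your proof closes (modulo the scale bookkeeping you already flag, which is routine: choose $\delta_0\leq\expc/2\lambda$ and $D$ small enough that $\tilde A,\tilde C<\tilde D$); without it, the case $B>D$ is simply not covered. Note that the paper's choice of the iteration time through the maximum $\max\{\dist(p,q),\dist(\pi(p),\pi(q))\}$ is exactly what lets it treat both distances on an equal footing at this point.
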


\begin{proof}
Taking positive iterates of $C$ we can assume that stable plaques of $C$ have diameter smaller than the 
expansivity constant $\expc$.
Denote by $\pi\colon P^u_1\to P^u_2$ the holonomy of two unstable plaques of $C$. 
Fix $p,q\in P^u_1$. 
By the triangular inequality of the metric we have that
\[
 \begin{array}{ll}
  |\dist(f^n(p),f^n(q))-\dist(f^n(\pi(p)),f^n(\pi(q)))| \leq & \dist(f^n(p),f^n(\pi(p)))+\\
							     & \dist(f^n(\pi(q)),f^n(q))
 \end{array}
\]
% \[
%   |\dist(f^n(p),f^n(q))-\dist(f^n(\pi(p)),f^n(\pi(q)))|\leq \dist(f^n(p),f^n(\pi(p)))+\dist(f^n(\pi(q)),f^n(q))
% \]
for all $n\in\Z$. 
If $n\geq 0$ then 
\[
 |\dist(f^n(p),f^n(q))-\dist(f^n(\pi(p)),f^n(\pi(q)))|\leq
 \frac{\dist(p,\pi(p)) +\dist(\pi(q),q)}{\lambda^n}
  \]
because $p,\pi(p)$ and $q,\pi(q)$ are in stable plaques and $\dist$ is self-similar with expanding factor $\lambda>1$.
If $\dist(p,\pi(p))<\expc$ and $\dist(\pi(q),q)<\expc$ then
\[
 |\dist(f^n(p),f^n(q))-\dist(f^n(\pi(p)),f^n(\pi(q)))|\leq
 \frac{2\expc}{\lambda^n}.
\]
Take $m\geq 0$ such that 
\[
\expc/\lambda^{m+1}<\max\{\dist(p,q),\dist(\pi(p),\pi(q))\}\leq\expc/\lambda^{m}.
\]
Then
\[
\begin{array}{ll}
|\dist(p,q)-\dist(\pi(p),\pi(q))|&= \frac{1}{\lambda^m} \left|\dist(f^m(p),f^m(q))-\dist(f^m(\pi(p)),f^m(\pi(q)))\right|\\
&\leq \frac{1}{\lambda^m}\cdot\frac{2\expc}{\lambda^m}=\frac{2\expc}{\lambda^{2m}}.
\end{array}
\]
Given that
$\expc/\lambda^{m+1}<\max\{\dist(p,q),\dist(\pi(p),\pi(q))\}$ 
we have 
\[
 |\dist(p,q)-\dist(\pi(p),\pi(q))|\leq
 \frac{2\max\{\dist(p,q),\dist(\pi(p),\pi(q))\}}{\lambda^{m-1}}.
\]
Applying (\ref{ecuCHota}) 
we conclude that 
\[
 \left|\frac{\dist(\pi(p),\pi(q))}{\dist(p,q)}-1\right|
 \leq
 \frac{2}{\lambda^{m-1}-2}.
\]
% 
% $$\dist(f^k(p),f^k(q))\leq3\expc$$ for all $k=0,1,\dots,n$. 
% That is, if $\dist^+_n(\pi(p),\pi(q))\leq\expc$ then $\dist^+_n(p,q)\leq3\expc$. 
% Consequently, if $\U$ is an $(\epsilon,\dist^+_n)$-cover of $P^u$ then 
% $\pi^{-1}(\U)$ is a $(3\epsilon,\dist^+_n)$-cover of $C$.
% This implies that $\cov_\expc(P^u,\dist^+_n)\geq\cov_{3\expc}(C,\dist^+_n)$ 
% and $\ent^+(P^u)\geq \ent^+(C)$.
% 
\end{proof}

\begin{thm}
\label{teoCajaPIso}
If $f\colon M\to M$ is expansive with $\dist$ self-similar, $C\subset M$ is a product box and $p\in C$ 
then $(C,\dist)$ and $(C,\dist_p)$ are pseudo isometric.
\end{thm}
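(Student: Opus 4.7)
The plan is to use Theorem \ref{teoDynTri} to reduce $\dist(x,y)$ to the maximum of the two ``leg'' distances of a dynamical triangle, and then identify those leg distances with the stable/unstable coordinate distances via Lemma \ref{lemHoloPIso}.

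\medskip

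\noindent\textbf{Step 1 (build the triangle).} Given $x,y\in C$, set $z=[x,y]\in W^s_\expc(x)\cap W^u_\expc(y)$. For $\dist(x,y)$ small enough, canonical coordinates together with Remark \ref{rmkNoObtuso} guarantee that $z$ lies in $W^u_{\expc/2\lambda}(y)\cap W^s_{\expc/2\lambda}(x)$, so $(y,x,z)$ is a dynamical triangle. By Theorem \ref{teoDynTri}, for every $\epsilon>0$ there is $\delta_1>0$ such that if $\dist(x,y)<\delta_1$ then
\[
 \left|\frac{\dist(x,y)}{\max\{\dist(x,z),\dist(z,y)\}}-1\right|<\epsilon.
\]

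\medskip

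\noindent\textbf{Step 2 (identify the legs with coordinate distances).} The pair $x,z$ lies in the common stable plaque $W^s_\expc(x)=W^s_\expc(z)$ of $C$. The unstable holonomy $\pi^s$ from this plaque to $W^s_\expc(p)$ sends $x$ to $x^s$, and sends $z$ to $W^s_\expc(p)\cap W^u_\expc(z)=W^s_\expc(p)\cap W^u_\expc(y)=y^s$, since $z\in W^u_\expc(y)$. Symmetrically, $y$ and $z$ lie in the common unstable plaque $W^u_\expc(y)=W^u_\expc(z)$, and the stable holonomy to $W^u_\expc(p)$ sends $y\mapsto y^u$ and $z\mapsto W^u_\expc(p)\cap W^s_\expc(z)=W^u_\expc(p)\cap W^s_\expc(x)=x^u$.

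\medskip

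\noindent\textbf{Step 3 (apply pseudo-isometry of holonomies).} By Lemma \ref{lemHoloPIso}, both holonomies are pseudo isometries. Hence, choosing $\delta_2>0$ smaller if needed, if $\dist(x,y)<\delta_2$ (so that $\dist(x,z)$ and $\dist(y,z)$ are also small by Remark \ref{rmkNoObtuso}) we get
\[
 \left|\frac{\dist(x^s,y^s)}{\dist(x,z)}-1\right|<\epsilon
 \quad\text{and}\quad
 \left|\frac{\dist(x^u,y^u)}{\dist(y,z)}-1\right|<\epsilon.
\]
Combining this with Step 1 via the elementary fact that $\max$ of pseudo-isometric pairs is again pseudo-isometric, i.e.\ if $(1-\epsilon)\alpha\leq\alpha'\leq(1+\epsilon)\alpha$ and likewise for $\beta,\beta'$, then $(1-\epsilon)\max\{\alpha,\beta\}\leq\max\{\alpha',\beta'\}\leq(1+\epsilon)\max\{\alpha,\beta\}$, and the fact that $\dist_p(x,y)=\max\{\dist(x^s,y^s),\dist(x^u,y^u)\}$, yields $\dist(x,y)/\dist_p(x,y)\to 1$ as $\dist(x,y)\to 0$, after renaming the $\epsilon$.

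\medskip

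\noindent\textbf{Main obstacle.} Conceptually the argument is straightforward; the only delicate part is bookkeeping the ``smallness'' thresholds so that all three estimates (dynamical triangle, unstable holonomy, stable holonomy) hold simultaneously, and verifying the routine fact that a pseudo-isometric perturbation commutes with the $\max$. No genuinely new estimate beyond Theorem \ref{teoDynTri} and Lemma \ref{lemHoloPIso} is needed.
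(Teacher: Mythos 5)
Your proof is correct and takes exactly the same route as the paper: the paper's entire proof of Theorem \ref{teoCajaPIso} is the one-line citation of Lemma \ref{lemHoloPIso} and Theorem \ref{teoDynTri}. Your write-up merely supplies the bookkeeping the paper leaves implicit, namely identifying the legs of the dynamical triangle $(y,x,[x,y])$ with the coordinate distances $\dist(x^s,y^s)$ and $\dist(x^u,y^u)$ via the two holonomies, and checking that pseudo-isometric estimates pass through the $\max$.
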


\begin{proof}
 It follows by Lemma \ref{lemHoloPIso} and Theorem \ref{teoDynTri}.
\end{proof}

\begin{rmk}
\azul{There are transitive expansive homeomorphisms with canonical coordinates 
for which the holonomy is not an isometry.
Consider the classical derived from Anosov diffeomorphism on the two-dimensional torus \cite{Smale}. 
Its non-wandering set consists of a fixed point and a basic set $\Omega$ 
that is locally the product of a Cantor set with an arc. 
Let $\gamma$ be a circle embedded in the torus and transverse to the arcs of $\Omega$. 
We have that $\Omega\cap \gamma$ is a Cantor set. 
Following the lines of $\Omega$ we can define a \emph{first return map} $g\colon \Omega\cap\gamma\to\Omega\cap\gamma$. 
If $p,q\in \gamma$ are end points of a \emph{gap} then $\dist(g^n(p),g^n(q))\to 0$ as $n\to\pm\infty$. 
This proves that, independently of the metric (self-similar or not), 
holonomies are not isometries.}
\end{rmk}

We remark that the following result can be applied to Anosov diffeomorphisms of compact (connected) manifolds.

\begin{thm}
\label{thmHoloIsoTrans}
 Let $f$ be an expansive homeomorphism with canonical coordinates 
 of a Peano continuum $M$. 
 If $\dist$ is self-similar and holonomies are isometries then $f$ is transitive. 
\end{thm}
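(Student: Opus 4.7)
My plan is to combine the rigidity from isometric holonomies with the Peano continuum hypothesis, via the spectral decomposition of $\Omega(f)$, to rule out multiple basic sets.

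The first step is to upgrade Corollary \ref{corCapCteLoc} to a global statement. Since the holonomy between any two unstable plaques in a product box is an isometry, Proposition \ref{propPisoHMeas} implies that it preserves Hausdorff measure, and hence the capacity $\ucap(P^u,\dist)$ is the same for every unstable plaque $P^u$ of that box (not merely two such plaques as in Corollary \ref{corCapCteLoc}). Using compactness, I would cover $M$ by finitely many product boxes with connected overlap pattern, and chain the equalities along the overlaps; since $M$ is connected this extends the constancy globally, so $\ucap(W^u_\epsilon(x),\dist)$ is independent of $x\in M$, and symmetrically for stable plaques. By Proposition \ref{propCapsu} this translates into homogeneous $\ent^{\pm}$, so $f$ has homogeneous entropy in the sense of the paper.

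The second step applies the proposition preceding Theorem \ref{thmCapMitad}: homogeneous entropy gives $\ent(\Lambda)=\ent(M)$ for every extremal basic set $\Lambda\subset\Omega(f)$. By Theorem \ref{teoEntCap} this upgrades to $\ucap(\Lambda,\dist)=\ucap(M,\dist)$ for each such $\Lambda$. The third step then aims to force $\Omega(f)=M$ and uniqueness of the basic set. Let $\Lambda_a$ be an attractor basic set; since $\Lambda_a$ is an attractor, every unstable plaque $W^u_\epsilon(p)$ of a box $C_r(p)$ with $p\in\Lambda_a$ is contained in $\Lambda_a$. The isometric-holonomy hypothesis together with the Peano continuum property should then force the stable plaques of $C_r(p)$ to stay inside $\Lambda_a$ as well, so that $C_r(p)\subset\Lambda_a$ and $\Lambda_a$ is open; being also closed in connected $M$, we conclude $\Lambda_a=M$, whence $\Omega(f)=M$ is a single basic set and $f$ is transitive.

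The main obstacle is precisely the rigidity argument in the third step. The Remark preceding Theorem \ref{thmHoloIsoTrans} (on the derived-from-Anosov diffeomorphism) pinpoints the mechanism that must be excluded: gap-endpoints in the stable direction whose orbits collapse under iteration, which is incompatible with isometric holonomies because self-similarity would simultaneously force the gap widths to contract as $\lambda^{-n}$ while holonomy-isometry would force them to be preserved across plaques. Converting this heuristic into a precise local contradiction at a hypothetical boundary point $p\in\partial\Lambda_a$, and showing that $C_r(p)$ is actually a neighborhood of $p$ in our setting so that openness of $\Lambda_a$ really follows, is the delicate part of the argument.
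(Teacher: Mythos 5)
Your proposal has a genuine gap at exactly the point where the theorem's content lies. Steps 1--2 (global constancy of the capacity/entropy of unstable plaques via chaining product boxes, hence homogeneous entropy and $\ent(\Lambda)=\ent(M)$ for extremal basic sets) use only that holonomies are pseudo isometries, which by Lemma \ref{lemHoloPIso} holds for \emph{every} self-similar metric; so these steps cannot exploit the isometry hypothesis, and indeed homogeneous entropy by itself does not force transitivity --- the paper's proposition on extremal basic sets only equates entropies, and nothing in your argument excludes a connected space carrying several basic sets of equal entropy. The entire burden therefore falls on your third step, the claim that for $p$ in an attractor $\Lambda_a$ the stable plaques of $C_r(p)$ also lie in $\Lambda_a$, so that $\Lambda_a$ is open. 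That claim is essentially the theorem itself (it is precisely what fails in derived-from-Anosov--type situations), and you leave it as a heuristic about gap widths rather than a proof; you also do not establish that $C_r(p)$ is a neighborhood of $p$. As written, the argument is circular at its decisive step.

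For comparison, the paper's proof does not use entropy or capacity at all. It argues by contradiction: if $f$ is not transitive, pick a repeller $R$, an attractor $A$, and a wandering point $x$ with $f^{-n}(x)\to R$, $f^{n}(x)\to A$. Local arc-connectedness of the Peano continuum gives an arc inside an unstable set joining a point $y\in R$ to $f^{-N}(x)$; pushing it forward yields an arc $\gamma$ from $f^N(x)$ to $f^{2N}(y)$, subdivided by points $p_0,\dots,p_k$ at mutual distance $<\delta$. Starting from $z\in A$ with $f^N(x)\in W^s_\delta(z)$, one slides $z$ along $\gamma$ by iterating the bracket, $q_{i+1}=W^u_\expc(q_i)\cap W^s_\expc(p_{i+1})$; the isometry of holonomies is what keeps $\dist(p_i,q_i)<\delta$ at every stage, so the chain never breaks, and the endpoint lies in $W^u(z)\cap W^s(f^{2N}(y))\subset A\cap R$, a contradiction. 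This direct propagation along an arc is the mechanism your step 3 would need, and it is where both the Peano hypothesis and the strict isometry (as opposed to pseudo isometry) are actually used.
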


\begin{proof}
Arguing by contradiction, consider from the spectral decomposition a repeller $R\subset \Omega(f)$, an attractor $A\subset \Omega(f)$ 
and a wandering point $x\in M$ such that $f^n(x)\to A$ and $f^{-n}(x)\to R$ as $n\to+\infty$.
From \cite{Nadler}*{Theorem 8.25} we know that Peano continua are locally arc connected. 
Since we have local product structure we have that stable and unstable plaques are locally arc connected. 
Then, there are $N>0$, $y\in R$ and an arc $l\subset W^u_\expc(y)$ from $y$ to $f^{-N}(x)$.

Consider $\delta>0$ 
such that if $\dist(p,q)\leq 2\delta$ then $W^s_\expc(p)\cap W^u_\expc(q)\neq\emptyset$.
% (a Lebesgue number) such that if $\diam(X)<2\delta$ then $X$ is contained in a product box. 
Take $z$ in the attractor $A$ such that $f^N(x)\in W^s_\delta(z)$.
% Since $M$ is locally connected there is $\epsilon>0$ such that if $\dist(p,q)<\epsilon$ then there is an arc 
% from $p$ to $q$ of diameter less than $\delta$. 
% Assume that $\diam(C_y)<\epsilon$ and $\diam(C_z)<\epsilon$. 
% Take an arc $\gamma$ from 
Let $\gamma=f^{2N}(l)$, an arc from $f^N(x)$ to $f^{2N}(y)\in R$ contained in the (global) unstable set of $f^N(x)$.
Consider $\gamma$ ordered from $f^N(x)$ to $f^{2N}(y)$ and take $p_0=f^N(x)<p_1<\dots<p_k=f^{2N}(y)$ points in $\gamma$ 
such that $\dist(p_{i-1},p_i)<\delta$ for each $i=1,\dots,k$.
% the subarc $\gamma_i\subset\gamma$ defined by $p_{i-1}$ and $p_i$ has diameter less than $\delta$.
Since $f^N(x)\in W^s_\delta(z)$ we have that $\dist(p_0,z)<\delta$ and then $\dist(p_1,z)<2\delta$. 
Then, 
% $p_1$ and $z$ are in a common product box and 
we can define $q_1=W^u_\expc(z)\cap W^s_\expc(p_1)$. 
Since the holonomy is an isometry we have that $\dist(p_1,q_1)=\dist(f^N(x),z)<\delta$. 
Then, $\dist(p_2,q_1)<2\delta$ and we can define $q_2=W^u_\expc(q_1)\cap W^s_\expc(p_2)$. 
Inductively we obtain a sequence $q_1,\dots,q_k$ as $q_{i+1}=W^u_\expc(q_i)\cap W^s_\expc(p_{i+1})$. 
The point $q_k$ is in $W^u(z)\cap W^s(f^{2N}(y))$. 
That is, $q_k\in A\cap R$. This proves the transitivity of $f$.
\end{proof}

Let us explain why Theorem \ref{thmHoloIsoTrans} is not true if we do not assume that $M$ is locally connected. 
We will consider a subshift of finite type on 4 symbols. 
The transition matrix 
\[
 \left(
 \begin{array}{llll}
  1&1&1&1\\
  1&1&1&1\\
  0&0&1&1\\
  0&0&1&1\\
 \end{array}
 \right)
\]
defines a subshift of finite type $f$ on a Cantor set $M$. 
From \cite{Wa78} we know that subshifts of finite type 
have canonical coordinates (or equivalently, the shadowing property). 
It is clear that $f$ is not transitive because 
the non-wandering set consists on a repeller and an attractor. 
Also, we can consider a self-similar metric with isometric holonomy (the metric given in Example \ref{exSSMShift}). 

\section{The intrinsic measure}
\label{secIntMeas}
In \S \ref{secIntErg} we will apply our results 
to the construction of the intrinsic measure 
of a topologically mixing expansive homeomorphism 
with canonical coordinates. 
In \S \ref{secGeoMarkov} we recall some known facts from \cites{Pesin,AH} that we need.

\subsection{Geometric constructions and Markov partitions}
\label{secGeoMarkov}
Let $\N$ be the set of non-negative integers and
consider a finite set $\U$.
Let $Q\subset \U^\N$ be a topologically mixing subshift of finite type. 

Let $(X,\rho)$ be a compact metric space and 
denote by $2^X$ the set of compact subsets of $X$.
Consider $\Delta\colon Q\times\N\to 2^X$ and $K_*,K^*>0$ such that 
\begin{enumerate}
 \item $\Delta(\omega,n+1)\subset \Delta(\omega,n)$, 
 \item for each $(\omega,n)\in Q\times\N$ there are balls $B_*(\omega,n)$ and $B^*(\omega,n)$ of 
 radius $K_*/\lambda^n$ and $K^*/\lambda^n$ respectively, such that 
 \[
  B_*(\omega,n)\subset\Delta(\omega,n)\subset B^*(\omega,n),
 \]
 \item $\inte(B_*(\omega_1,n))\cap \inte(B_*(\omega_2,n))=\emptyset$ if $\omega_1|_{\{0,1,\dots,n\}}\neq \omega_2|_{\{0,1,\dots,n\}}$,
\end{enumerate}
where $\inte(A)$ denotes the interior of $A$.
Define 
\begin{equation}
 \label{ecuDfFDelta}
  F_\Delta=\cap_{n\geq 0}\cup_{\omega\in Q}\Delta(\omega,n)\subset X.
\end{equation}

\begin{thm}{\cites{Pesin,PW1}}
\label{thmPesin}
In the above conditions, if $d=\ucap(F_\Delta,\rho)$ then
$$0<\mu^d(F_\Delta,\rho)<\infty.$$
\end{thm}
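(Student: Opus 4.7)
The plan is a standard Moran-type argument: obtain $\mu^d(F_\Delta,\rho)<\infty$ from the natural cover of $F_\Delta$ by the sets $\{\Delta(\omega,n)\}$, and obtain $\mu^d(F_\Delta,\rho)>0$ via the mass distribution principle applied to a probability measure on $F_\Delta$ pushed forward from the Parry (maximal entropy) measure of the subshift $Q$. Let $Z_n$ denote the number of admissible $(n+1)$-blocks of $Q$; a preliminary step is to identify $d=h(Q)/\log\lambda$, where $h(Q)$ is the topological entropy of $Q$. Both inequalities here follow by sandwiching $\cov_r(F_\Delta,\rho)$ between multiples of $Z_n$ for $r$ of order $\lambda^{-n}$: the upper side uses the cover by $Z_n$ sets $\Delta(\omega,n)$ (each of diameter at most $2K^*/\lambda^n$ by condition~(2)), and the lower side uses the disjointness in condition~(3) combined with a packing estimate for the balls $B_*(\omega,n)$.

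For the finiteness, I would plug the natural cover directly into the definition of $\mu^d$, obtaining
\[
  \mu^d_{2K^*/\lambda^n}(F_\Delta,\rho)\leq Z_n\,(2K^*/\lambda^n)^d=(2K^*)^d\,Z_n\,\lambda^{-nd}.
\]
Topological mixing and the finite-type hypothesis imply, via Perron--Frobenius applied to the transition matrix, that $Z_n$ is comparable to $e^{nh(Q)}=\lambda^{nd}$, so the right-hand side stays bounded as $n\to\infty$, yielding $\mu^d(F_\Delta,\rho)<\infty$.

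For positivity, I would push forward the Parry measure $\nu$ from $Q$ to $\mu=\pi_*\nu$ on $F_\Delta$, where $\pi(\omega)$ is the unique point of $\bigcap_n\Delta(\omega,n)$ (well defined because $\diam\Delta(\omega,n)\leq 2K^*/\lambda^n\to 0$). The target estimate is the Frostman-type bound $\mu(B(x,r))\leq Cr^d$ for small $r$; combined with the mass distribution principle this yields $\mu^d(F_\Delta,\rho)\geq \mu(F_\Delta)/C>0$. For $r$ of order $\lambda^{-n}$, the ball $B(x,r)$ meets only a bounded number (uniform in $x$ and $n$) of the sets $\Delta(\omega,n)$, because the inner balls $B_*(\omega,n)$, of radius $K_*/\lambda^n$, have pairwise disjoint interiors, while the outer balls $B^*(\omega,n)$ have comparable radius $K^*/\lambda^n$; each such class contributes $\nu$-mass comparable to $e^{-(n+1)h(Q)}$, which has the right order $r^d$ by Perron--Frobenius.

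The main obstacle I expect is precisely this packing estimate: in a general compact metric space, bounding the number of disjoint balls of radius $K_*/\lambda^n$ meeting $B(x,r)$ by a constant independent of $x$ and $n$ is not automatic. The fixed ratio $K^*/K_*$ and the level-uniform nested structure of the $\Delta(\omega,n)$ are what make the bound work; concretely, one confines the relevant centers to a ball of radius $O(K^*/\lambda^n)$ around $x$ and uses that they are pairwise $K_*/\lambda^n$-separated, so a doubling-type comparison (which holds on the support of the construction) supplies the uniform cardinality bound.
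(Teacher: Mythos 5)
The first thing to say is that the paper does not actually prove this statement: its ``proof'' is the single line ``See \cite{Pesin} Theorems 13.1 and 13.4'', so the real comparison is with the argument in \cite{Pesin} and \cite{PW1}. Your outline reproduces that argument faithfully: Moran cover by the $Z_n$ sets $\Delta(\omega,n)$ for finiteness, Parry measure pushed through the coding map plus the mass distribution principle for positivity, and Perron--Frobenius to get $Z_n\asymp e^{nh(Q)}$. You also correctly locate the one nontrivial ingredient, the uniform multiplicity bound (a ball of radius $\asymp\lambda^{-n}$ meets a bounded number of the sets $\Delta(\omega,n)$). Note that this bound is needed in \emph{both} halves, not just the Frostman estimate: your finiteness computation $Z_n(2K^*\lambda^{-n})^d=O(1)$ requires $d\geq h(Q)/\log\lambda$, and since the Moran cover alone only gives $\ucap(F_\Delta,\rho)\leq h(Q)/\log\lambda$, the reverse inequality --- hence the whole identification $d=h(Q)/\log\lambda$ --- rests on the same packing estimate.

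The genuine gap is your justification of that estimate. In the cited source the construction sits in $\R^m$ and the bound is a volume comparison: pairwise disjoint balls of radius $K_*\lambda^{-n}$ with centers in a ball of radius $O(\lambda^{-n})$ number at most a constant depending on $m$ and $K^*/K_*$. Here $(X,\rho)$ is an arbitrary compact metric space, and conditions (1)--(3) do not obviously supply a substitute: they control the branching \emph{inside} a single $\Delta(\omega,n)$ (at most $(\card\U)^k$ level-$(n+k)$ children), but they place no constraint on how many level-$n$ cylinders with \emph{different} prefixes can re-accumulate near a point $x\in F_\Delta$, because a metric ball around $x$ need not be contained in $x$'s own cylinder at any comparable level. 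Your sentence ``a doubling-type comparison (which holds on the support of the construction)'' is a restatement of the multiplicity bound, not a derivation of it from (1)--(3). To close the argument you must either import the ambient hypothesis $X\subset\R^m$ (as in \cite{Pesin}), add the Moran multiplicity bound as an explicit axiom, or verify it directly for the particular sets the paper feeds into this theorem (intersections of unstable plaques with Markov rectangles under a self-similar metric) --- and that last verification is itself not immediate, since two nearby points can realize different finite itineraries near the boundaries of the partition.
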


\begin{proof}
 See \cite{Pesin} Theorems 13.1 and 13.4.
\end{proof}

% \subsection{Markov partitions}
Let $f\colon M\to M$ be an expansive homeomorphism with canonical coordinates.
A closed subset $R\subset M$ is a \emph{rectangle} if $\diam(C)<\delta$, 
$R$ is the closure of its interior and $[x,y]\in R$ for all $x,y\in R$. 
Given $x\in R$ denote by 
% $$R^s(x)=\{y\in R:[x,y]=y\}$$
$R^u(x)=\{y\in R:[y,x]=y\}.$
Assume that the diameter of the rectangle is so small that
% $$R^s(x)\subset W^s_\expc(x)$$
$R^u(x)\subset W^u_\expc(x)$
for all $x\in R$.
\begin{prop}
\label{propRmin}
For every $x\in \inte (R)$ there is $r>0$ such that 
 for all $y\in R^s(x)$ it holds that 
 \[
  W^u_r(y)\subset R^u(y).
 \]
\end{prop}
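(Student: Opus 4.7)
The plan is to exploit that $x \in \inte(R)$ supplies an $M$-neighborhood of $x$ inside $R^u(x)$, and then to transport this through the stable holonomy $w \mapsto [w, x]$ to every $y \in R^s(x)$. The pivotal observation is that for $y \in R^s(x)$ one has $[y, x] = x$: indeed $y \in W^s_\expc(x)$ means $x \in W^s_\expc(y) \cap W^u_\expc(x)$, so uniqueness of the bracket forces the intersection to be $\{x\}$. Continuity of $[\cdot,\cdot]$ then keeps $[w, x]$ close to $x$ whenever $w$ is close to $y$, and crucially this will hold uniformly in $y$.

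Concretely, I would first choose $r_1 > 0$ with $\overline{B(x, r_1)} \subset R$, so that any $z \in W^u_\expc(x)$ with $\dist(z, x) < r_1$ lies in $R \cap W^u_\expc(x)$, which equals $R^u(x)$ by the definition of $R^u$. Next, by uniform continuity of $[\cdot,\cdot]$ on the compact set of pairs at mutual distance $\leq \diam(R)$ (or equivalently via the pseudo-isometry estimate of Lemma \ref{lemHoloPIso}, applied to the stable holonomy inside a product box containing $R$), I would pick $r \in (0, r_1)$ such that for every $y \in R^s(x)$ and every $w$ with $\dist(w, y) < r$, one has $\dist([w, x], x) < r_1$.

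With this $r$ in hand, for any $y \in R^s(x)$ and $w \in W^u_r(y)$, set $z := [w, x]$: the previous step yields $z \in R^u(x)$. A direct check shows $[z, y] = w$, since $[w, x] = z$ puts $w$ in $W^s_\expc(z)$, which together with $w \in W^u_\expc(y)$ and bracket uniqueness pins down $w$ as the intersection. The rectangle property applied to $z, y \in R$ then gives $w = [z, y] \in R$, and $w \in W^u_\expc(y)$ yields $w \in R^u(y)$, so $W^u_r(y) \subset R^u(y)$. The main obstacle is the uniformity in $y$ of the continuity estimate, but this is secured by uniform continuity of $[\cdot,\cdot]$ on the relevant compact domain; the remainder of the argument is a direct unwinding of the definitions of $R^u$ and of the bracket.
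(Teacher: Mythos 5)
Your argument is correct and is essentially a fleshed-out version of the paper's one-line proof ("compactness of $R$ and the product structure"): you use the interior ball at $x$, the identity $[y,x]=x$ for $y\in R^s(x)$, uniform continuity of the bracket on a compact domain, and the rectangle property to land back in $R^u(y)$. The only point worth flagging is that for $w\in W^u_r(y)$ the bracket $[w,x]$ must actually be defined, i.e.\ $\dist(w,x)<\delta$; this is harmless since one may take $\diam(R)$ and $r$ small enough, but it deserves a word.
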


\begin{proof}
It follows by the compactness of $R$ and the product structure.
\end{proof}

A finite cover of $M$ by rectangles $\U=\{R_1,\dots,R_p\}$ of $M$ is a
\emph{Markov partition} for $f\colon M\to M$ if
\begin{enumerate}
\item $\inte(R_i)\cap\inte(R_j)=\emptyset$ if $i\neq j$,
\item for each $x\in\inte(R_i)\cap f^{-1}(\inte(R_j))$ we have
$f(R_i^s(x))\subset R^s_j(f(x))$ 
and 
$R^u_j(f(x))\subset f(R^u_i(x)).$
\end{enumerate}
Every expansive homeomorphism with canonical coordinates admits Markov partitions by rectangles of
arbitrarily small diameter, see \cite{AH}*{Theorem 4.2.8}.
Define
\[
 Q=\{\omega\in \U^\N:\inte(\omega(j))\cap f^{-1}(\inte(\omega(j+1)))\neq\emptyset\text{ for all }j\geq 0\}.
\]
If $f$ is topologically mixing we can apply \cite{AH}*{Theorem 4.3.5} to obtain that 
$Q$ is a topologically mixing subshift of finite type.
For each rectangle $R_i\in\U$ fix an unstable plaque $P^u_{R_i}\subset R_i$ and define 
\begin{equation}
\label{ecuXCup}
X=\cup_{i=1}^p P^u_{R_i}.
\end{equation}
Given $(\omega,n)\in Q\times\N$
define 
$\Delta(\omega,n)=X\cap (\cap_{j=0}^nf^{-j}(\omega(j)))$.
\begin{rmk}
\label{rmkXFDelta}
A point $x\in X$ is in $\Delta(\omega,n)$ if $f^j(x)$ is in the rectangle $\omega(j)\in\U$ for $j=0,\dots,n$. 
Since $\U$ is a cover of $M$ we conclude that the set $F_\Delta$ defined in (\ref{ecuDfFDelta}) coincides with $X$.
\end{rmk}

% % \[
% %  R^-(\omega,n)=\cap_{j=-n}^0f^{-j}(\omega(j))
% % \]
% and 
% \[
%  R(\omega,n)=R^+(\omega,n)\cap R^-(\omega,n).
% \]
\begin{prop}
With the previous notation, it holds that
$\Delta(\omega,n)\subset W^u_{\expc/\lambda^n}(x)$
for all $x\in \Delta(\omega,n)$.
\end{prop}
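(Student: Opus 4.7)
The plan is to establish two intermediate facts for arbitrary $x,y \in \Delta(\omega,n)$ and then combine them:
\emph{(i)} $y \in W^u_\expc(x)$, and
\emph{(ii)} $\dist(x,y) \leq \expc/\lambda^n$.
Once these are in hand, Proposition \ref{propEstContrae} applied to the unstable case immediately gives
$\dist(f^{-k}(x),f^{-k}(y)) = \lambda^{-k}\dist(x,y) \leq \expc/\lambda^n$
for every $k \geq 0$, which is exactly the statement $y \in W^u_{\expc/\lambda^n}(x)$.

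For step \emph{(i)}, the idea is to exploit the construction $X = \bigcup_{i=1}^{p} P^u_{R_i}$. The rectangles of the Markov partition have pairwise disjoint interiors and each chosen plaque $P^u_{R_i}$ is contained in $R_i$. Since both $x$ and $y$ lie in the common rectangle $\omega(0)$ and in $X$, they must sit in the same plaque $P^u_{R_{\omega(0)}}$, which by construction is contained in a single local unstable set. Hence $y \in W^u_\expc(x)$.

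For step \emph{(ii)}, I would take the Markov partition fine enough that every rectangle has diameter less than $\expc$ (possible by \cite{AH}*{Theorem 4.2.8}). Since $f^n(x)$ and $f^n(y)$ both belong to $\omega(n)$, we obtain $\dist(f^n(x),f^n(y)) < \expc$. Combined with \emph{(i)} and Proposition \ref{propEstContrae}, this yields
\[
 \dist(x,y) = \lambda^{-n}\dist(f^n(x),f^n(y)) < \expc/\lambda^n,
\]
as required.

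The main obstacle is to make \emph{(i)} fully rigorous: one must rule out the possibility that $x$ and $y$ belong to distinct chosen plaques $P^u_{R_j}$, $P^u_{R_{j'}}$ whose rectangles $R_j$, $R_{j'}$ merely touch $\omega(0)$ along the boundary. This is precisely where Proposition \ref{propRmin} enters, guaranteeing that small unstable balls through points of a stable plaque remain inside the same unstable slice of the rectangle, and forcing $x$ and $y$ into a common unstable plaque of $\omega(0)$. Once this boundary issue is resolved, both steps are short and the conclusion follows.
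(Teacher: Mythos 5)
Your argument is correct and follows essentially the same route as the paper: both identify that $x$ and $y$ lie in a common unstable plaque so that $f^n(y)\in W^u_\expc(f^n(x))$, and then pull back $n$ times using the self-similar contraction on unstable sets (Proposition \ref{propEstContrae}). The one point to make explicit in your step \emph{(ii)} is that Proposition \ref{propEstContrae} must be applied at the base point $f^n(x)$, which requires $f^n(y)\in W^u_\expc(f^n(x))$ rather than merely $y\in W^u_\expc(x)$; this does follow because $f^j(x)$ and $f^j(y)$ share the rectangle $\omega(j)$ of diameter less than $\expc$ for every $0\le j\le n$, a fact you already invoke.
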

\begin{proof}
For all $y\in \Delta(\omega,n)$ we know that $f^n(y)\in\omega(n)$. 
Denote by $R_n$ the rectangle $\omega(n)\in\U$.
Since $R_n^u(f^n(y))\subset W^u_\expc(f^n(x))$ we have that $f^n(y)\in W^u_\expc(f^n(x))$. 
Then, $y\in f^{-n}(W^u_\expc(f^n(x)))=W^u_{\expc/\lambda^n}(x)$.
\end{proof}

The following result is based on \cite{Pesin}*{Theorem 22.1}.

\begin{thm}
\label{thmPesin2}
Let $f\colon\Lambda\to\Lambda$ be a 
topologically mixing 
expansive homeomorphism with canonical coordinates of a 
compact metric space $\Lambda$ with a \azul{self-similar metric $\rho$.}
If $d=\ucap(W^u_\expc(x),\rho)$ then
\begin{equation}
\label{ecuMedFinPos}
 0<\mu^d(W^u_\expc(x),\rho)<\infty
\end{equation}
for all $x\in \Lambda$.
\end{thm}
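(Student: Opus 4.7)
The plan is to apply Theorem \ref{thmPesin} to the Moran-type construction $(\Delta(\omega,n))$ built in Section \ref{secGeoMarkov} from a Markov partition, deducing $0<\mu^d(X,\rho)<\infty$ for the set $X$ of (\ref{ecuXCup}), and then to transfer this estimate to $W^u_\expc(x)$ using holonomy and $f$-iterates.

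First, I fix a Markov partition $\U=\{R_1,\dots,R_p\}$ of sufficiently small diameter and form $Q$, $X$ and $\Delta$ as in Section \ref{secGeoMarkov}, so that by Remark \ref{rmkXFDelta} the set $F_\Delta$ of (\ref{ecuDfFDelta}) coincides with $X$. Nestedness $\Delta(\omega,n+1)\subset\Delta(\omega,n)$ is immediate. The outer ball $B^*(\omega,n)$ with constant $K^*=\expc$ is provided by the proposition preceding the statement: $\Delta(\omega,n)\subset W^u_{\expc/\lambda^n}(x)\subset B(x,\expc/\lambda^n)$ for any $x\in\Delta(\omega,n)$. The disjointness condition on inner balls corresponds, via $f^n$, to the disjointness of the interiors of the distinct rectangles of $\U$, which holds by the Markov property.

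The delicate ingredient is the inner ball $B_*(\omega,n)$ with uniform $K_*/\lambda^n$. By induction on $n$, applying the Markov inclusion $R^u_{\omega(k+1)}(f(y))\subset f(R^u_{\omega(k)}(y))$ and stabilizing stable plaques, the image $f^n(\Delta(\omega,n))$ coincides with a full unstable plaque $R^u_{\omega(n)}(z_n)$ of $R_{\omega(n)}$. Proposition \ref{propRmin} supplies for each such $z_n$ in the interior an unstable ball $W^u_{r}(z_n)\subset R^u_{\omega(n)}(z_n)$, and since there are only finitely many rectangles and the relevant points can be arranged to lie in a compact interior of each rectangle, a uniform $r_0>0$ can be extracted. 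Pulling back by $f^{-n}$ and invoking Proposition \ref{propEstContrae}, which gives exact scaling by $\lambda^{-n}$ along unstable orbits, yields a ball $B(x,r_0/\lambda^n)\cap X\subset\Delta(\omega,n)$, so $K_*=r_0$ works. Theorem \ref{thmPesin} then gives $0<\mu^d(X,\rho)<\infty$ with $d=\ucap(X,\rho)$. The extraction of a uniform $r_0$ is the main obstacle, and the finiteness of the alphabet $\U$ is essential.

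It remains to transfer the estimate to $W^u_\expc(x)$ for an arbitrary $x\in\Lambda$. By Corollary \ref{corCapCteLoc}, all unstable plaques of a common product box have equal capacity; combining this with $f$-iterates of unstable sets (which scale the metric by powers of $\lambda$) and using topological mixing to connect any two product boxes via the orbit of a dense point, every local unstable set has capacity equal to $d$, and in particular $d=\ucap(W^u_\expc(x),\rho)$. For positivity, applying the inner-ball bound at $n=0$ inside each $P^u_{R_i}$ gives $\mu^d(P^u_{R_i},\rho)>0$; pick a product box $C$ containing $x$ and an $i$ so that, after finitely many holonomies and $f$-iterates, $P^u_{R_i}$ can be compared with $R_i^u(x)\subset W^u_\expc(x)$. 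Since holonomies are pseudo isometries by Lemma \ref{lemHoloPIso} and hence preserve $\mu^d$ by Proposition \ref{propPisoHMeas}, while $f$ scales $\mu^d$ on small unstable sets by $\lambda^d$ through Proposition \ref{propEstContrae}, positivity and finiteness are preserved through the comparison, giving $0<\mu^d(R_i^u(x),\rho)<\infty$ and hence the lower bound on $\mu^d(W^u_\expc(x),\rho)$. For finiteness, cover $W^u_\expc(x)$ by finitely many unstable-plaque pieces, each pseudo-isometric to a subset of some $P^u_{R_j}\subset X$, and sum the finite contributions obtained from $\mu^d(X,\rho)<\infty$.
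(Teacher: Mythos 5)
Your proposal follows essentially the same route as the paper: build the Moran-type family $\Delta(\omega,n)$ from a Markov partition, verify the ball conditions with $K^*=\expc$ and $K_*$ coming from Proposition \ref{propRmin}, apply Theorem \ref{thmPesin} to get $0<\mu^d(X,\rho)<\infty$, and transfer to arbitrary $W^u_\expc(x)$ via topological mixing. You supply considerably more detail than the paper does (notably on the inner-ball uniformity and on the holonomy/iteration transfer, which the paper compresses into a single sentence), but the argument is the same and is correct.
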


\begin{proof}
For each rectangle $R\in\U$ fix an interior point $x_R$. 
Consider $r_R>0$ from Proposition \ref{propRmin}. 
Define 
$K_*=\min\{r_R:R\in\U\}$ and $K^*=\expc$. 
From Theorem \ref{thmPesin} 
we have that $0<\mu^d(X,\rho)<\infty$, where $X$ is given by (\ref{ecuXCup}).
This is because $F_\Delta=X$ (Remark \ref{rmkXFDelta}). 
Since $f$ is topologically mixing we conclude (\ref{ecuMedFinPos}).
\end{proof}

\subsection{Intrinsic ergodicity}
\label{secIntErg}
Let $f\colon M\to M$ be an expansive homeomorphism with canonical coordinates 
of a compact metric space. 
In addition assume that $f$ is topologically mixing. 
In \cite{Bo71} Bowen (see also \cite{AH}*{Theorem 11.5.13}) 
proved that topologically mixing 
expansive homeomorphisms with canonical coordinates 
have the specification property.
And in \cite{BoUnEqSt} he proved that 
expansive homeomorphisms with specification (on a compact metric space) 
admit a unique measure with maximal entropy. 
A homeomorphism with a unique invariant measure maximizing the entropy is called 
\emph{intrinsically ergodic} \cite{Weiss}. The purpose of this section is 
to show that this special measure can be naturally constructed using a 
self-similar metric. 

Assume that $\dist$ is self-similar with expanding factor $\lambda>1$ 
and define 
$$d=\frac{\ent(f)}{2\log(\lambda)}.$$
Let $\mu$ be the Borel measure on $M$ such that 
given a rectangle $P^s\times P^u\subset M$ it holds that 
\[
 \mu(P^s\times P^u)=\mu^d(P^s)\mu^d(P^s)
\]
where $\mu^d$ is the $d$-dimensional Hausdorff measure. 
From Lemma \ref{lemHoloPIso} we know that the holonomy on a product box is 
a pseudo-isometry and Proposition \ref{propPisoHMeas} proves that pseudo-isometries preserve the Hausdorff measure. 
This means that the measure of a box does not depend on the plaques used to define the box.

\begin{thm}
\label{thmIntMeas}
If $f$ is a topologically mixing 
expansive homeomorphism with canonical coordinates 
of a compact metric space 
then $\mu$ is the measure of maximal entropy.
In particular, $\mu$ does not depend on the self-similar metric.
\end{thm}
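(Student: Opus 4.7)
The plan is to verify that $\mu$ is a well-defined $f$-invariant Borel probability measure whose metric entropy equals the topological entropy, and then invoke Bowen's uniqueness theorem for systems with specification.

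First I would check that $\mu$ is well-defined. Applying Theorem \ref{thmPesin2} to $f$ and to $f^{-1}$ gives $0<\mu^d(W^u_\expc(x)), \mu^d(W^s_\expc(x))<\infty$ for every $x$. Fix a Markov partition $\U=\{R_1,\dots,R_p\}$ of sufficiently small diameter. On subrectangles of each $R_i$ the product formula $\mu^d(P^s)\mu^d(P^u)$ is independent of the chosen reference plaques because holonomies within a product box are pseudo isometries (Lemma \ref{lemHoloPIso}) and pseudo isometries preserve the Hausdorff $d$-measure (Proposition \ref{propPisoHMeas}). Carathéodory extension then yields a Borel measure, which I would normalize so that $\mu(M)=1$.

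For $f$-invariance, Proposition \ref{propEstContrae} says that $f$ acts as an exact similarity of ratio $\lambda^{-1}$ on local stable plaques and of ratio $\lambda$ on local unstable plaques. Hence $\mu^d(f(P^s))=\lambda^{-d}\mu^d(P^s)$ and $\mu^d(f(P^u))=\lambda^{d}\mu^d(P^u)$, and the two factors cancel in the product. The Markov property then transports rectangles to unions of subrectangles in the standard way and yields $f$-invariance of $\mu$.

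To compute the measure-theoretic entropy I would apply the Brin-Katok formula
\[
h_\mu(f)=\lim_{\epsilon\to 0}\liminf_{n\to\infty}-\frac{1}{n}\log\mu(B^+_n(x,\epsilon))
\]
for $\mu$-almost every $x$, where $B^+_n(x,\epsilon)=\{y:\dist^+_n(x,y)<\epsilon\}$. Inside a fixed product box through $x$, self-similarity and Theorem \ref{teoCajaPIso} make $B^+_n(x,\epsilon)$ essentially the product of the whole local stable plaque of $x$ with $W^u_{\epsilon/\lambda^n}(x)$, because $f^k$ contracts stable directions for $k\geq 0$ and multiplies unstable distances by exactly $\lambda^k$. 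The similarity $f^n$ restricted to unstable plaques gives the exact scaling $\mu^d(W^u_{\epsilon/\lambda^n}(x))=\lambda^{-nd}\mu^d(W^u_\epsilon(f^n(x)))$, and the latter factor is bounded between two positive constants by Theorem \ref{thmPesin2} and compactness. Therefore
\[
-\frac{1}{n}\log\mu(B^+_n(x,\epsilon))\longrightarrow d\log\lambda=\frac{\ent(f)}{2}=\entf(f),
\]
by the definition of $d$ together with Proposition \ref{propDefEnt} and Remark \ref{obsDefEnt}. Since topologically mixing expansive homeomorphisms with canonical coordinates satisfy specification (Bowen) and hence admit a unique measure of maximal entropy, $\mu$ must coincide with that measure and in particular does not depend on the chosen self-similar metric.

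The main obstacle is making the heuristic $B^+_n(x,\epsilon)\approx W^s_\epsilon(x)\times W^u_{\epsilon/\lambda^n}(x)$ into rigorous two-sided bounds. The product structure of the box and Theorem \ref{teoCajaPIso} give the containment in both directions up to $(1+o(1))$ factors; the fact that the Hausdorff $d$-measure of unstable sets of small radius need not scale cleanly as $r^d$ is handled by using the exact dynamical rescaling $\mu^d(f^{-1}(P^u))=\lambda^{-d}\mu^d(P^u)$ along the geometric sequence $r=\epsilon/\lambda^n$, which is precisely what the Brin-Katok limit requires.
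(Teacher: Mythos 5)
Your proposal is correct in substance and shares all of the geometric core with the paper's proof: well-definedness via holonomy invariance (Lemma \ref{lemHoloPIso} plus Proposition \ref{propPisoHMeas}), $f$-invariance via the exact scaling $\mu^d(f(P^u))=\lambda^d\mu^d(P^u)$ and $\mu^d(f(P^s))=\lambda^{-d}\mu^d(P^s)$, and the key estimate that a forward Bowen ball is, up to $(1\pm\epsilon)$ factors coming from Theorem \ref{teoCajaPIso}, the product set $[W^s_\delta(x),W^u_{\delta/\lambda^n}(x)]$, whose $\mu$-measure scales like $\lambda^{-nd}$. Where you diverge is in how you convert that estimate into the conclusion. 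You compute $h_\mu(f)$ directly via Brin--Katok, obtain $h_\mu(f)=d\log\lambda=\entf(f)$, and then invoke uniqueness of the measure of maximal entropy under specification. The paper instead verifies that $\mu$ is $f$-homogeneous, i.e.\ that $\mu(D^n_\delta(y))\leq c\,\mu(D^n_\delta(x))$ uniformly in $x,y,n$, and cites \cite{DGS}*{Proposition 19.7}, which packages both the entropy identity and the maximality into one criterion. Both routes work; yours makes the value of the entropy explicit, while the paper's avoids Brin--Katok entirely and only needs the two-sided comparability of Bowen-ball measures rather than their precise exponential rate.

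One step in your sketch is too quick: you assert that $\mu^d(W^u_\epsilon(f^n(x)))$ is ``bounded between two positive constants by Theorem \ref{thmPesin2} and compactness.'' Theorem \ref{thmPesin2} gives positivity and finiteness pointwise, but compactness alone does not yield uniform bounds, since $y\mapsto\mu^d(W^u_\epsilon(y))$ has no obvious continuity. The paper fills this in with a transitivity argument: take $z$ with dense forward orbit, choose $N$ so that $\{z,\dots,f^N(z)\}$ is $r$-dense, and use holonomy invariance of $\mu^d$ together with the exact $\lambda^{\pm d}$ scaling under $f$ to compare $\mu^d(W^u_\epsilon(y))$ for arbitrary $y$ with $\mu^d(W^u_\epsilon(z))$, giving $0<m^u(\epsilon)\leq M^u(\epsilon)<\infty$. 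You should insert this (or an equivalent) argument; once that is done, your Brin--Katok computation and the appeal to Bowen's uniqueness theorem go through.
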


\begin{proof}
By Theorem \ref{thmPesin2}, we have that
\[
  0<\mu^d(W^s_\epsilon(x))<\infty\text{ and }
  0<\mu^d(W^u_\epsilon(x))<\infty 
\]
for all $x\in M$ and for all $\epsilon>0$.
Given that $\dist$ is self-similar we can apply \cite{Falconer}*{Scaling property 2.1} to conclude that 
\[
\begin{array}{l} 
\mu^d(f(W^u_\expc(x)),\dist)=\lambda^d\mu^d(W^u_\expc(x),\dist)\\
\mu^d(f(W^s_\expc(x)),\dist)=\lambda^{-d}\mu^d(W^s_\expc(x),\dist)
\end{array}
\]
for all $d>0$.
This implies that $\mu$ is $f$-invariant.

To prove that $\mu$ is the intrinsic measure we apply a result of Bowen explained in \cite{DGS}. 
For this purpose we recall that $\mu$ is $f$-\emph{homogeneous} 
if for all $\epsilon>0$ there are $\delta>0$ and $c>0$ such that 
\[
 \mu(D^n_\delta(y))\leq c\mu(D^n_\epsilon(x))
\]
for all $n\geq 0$ and all $x,y\in M$, where 
$$D^n_\epsilon(x)=\{z\in M:\dist(f^i(x),f^i(z))\leq\epsilon\text{ if }0\leq i\leq n-1\}.$$
According to \cite{DGS}*{Proposition 19.7}, in order to prove that $\mu$ maximizes the entropy it 
is sufficient to show that $\mu$ is $f$-homogeneous. 

From Theorem \ref{teoCajaPIso} we know that for $\epsilon>0$ there is $\delta>0$ such that 
if $\dist(x,y)<\delta$ then 
\[
 (1-\epsilon)\dist_x(x,y)<\dist(x,y)<(1+\epsilon)\dist_x(x,y).
\]
If we define 
\[
 C^n_\delta(x)=[W^s_\delta(x),W^u_{\delta/\lambda^n}(x)]
\]
then
\begin{equation}
 \label{ecuBolas}
 C^n_{\delta/(1+\epsilon)}(x)\subset D^n_\delta(x)\subset C^n_{\delta/(1-\epsilon)}(x).
\end{equation}
Define 
\[
m^\sigma(\epsilon)=\inf\{\mu^d(W^\sigma_\epsilon(x)): x\in M\} 
\]
and 
\[
M^\sigma(\epsilon)=\sup\{\mu^d(W^\sigma_\epsilon(x)): x\in M\}
\]
for $\sigma=s,u$.
We will show that $0<m^\sigma(\epsilon)\leq M^\sigma(\epsilon)<\infty$.
From Theorem \ref{thmPesin2} we know that each 
$\mu^d(W^\sigma_\epsilon(x))$ is positive and finite.
Since $f$ is transitive, we can take $z\in M$ with dense positive orbit. 
For $r>0$ given take $N$ large such that $\{z,f(z),\dots,f^N(z)\}$ is $r$-dense in $M$. 
Then, it is easy to see that $m^s(\epsilon)>\mu^d(W^s_\epsilon(z))\lambda^{-N}>0$. 
The other inequalities are analogous. 
From these inequalities, Equation (\ref{ecuBolas}) and the definition of $\mu$ we have
\[
 m^s(\delta/(1+\epsilon))m^u(\delta/(1+\epsilon))\lambda^{-n}\leq
 \mu(D^n_\delta(x))\leq
  M^s(\delta/(1-\epsilon))M^u(\delta/(1-\epsilon))\lambda^{-n}
\]
for all $x\in M$ and all $n\geq 0$.
If we define 
\[
 c=\frac{M^s(\delta/(1-\epsilon))M^u(\delta/(1-\epsilon))}{m^s(\delta/(1+\epsilon))m^u(\delta/(1+\epsilon))}
\]
we obtain 
\[
 \mu(D^n_\delta(y))\leq c\mu(D^n_\delta(x))
\]
for all $x,y\in M$ and all $n\geq 0$. 
This proves that $\mu$ is $f$-homogeneous and as we explained the proof ends.
\end{proof}

\begin{rmk}If we can apply
\cite{Falconer}*{Corollary 7.4} 
then the Hausdorff dimension of $M$ is 
$\frac{\ent(f)}{\log(\lambda)}$ (assuming the hypothesis of Theorem \ref{thmIntMeas}). 
The problem is that in \cite{Falconer} a global hypothesis (of the book) 
is that $M$ must be contained in Euclidean $\R^n$ (i.e., the metric of $M$ must be induced by an embedding of $M$ in some $\R^n$). 
Whether this hypothesis is essential or not is not clear to the author. 
% I think that this hypothesis is not essential. 
Note that the finite dimensionality of a compact metric space admiting an expansive homeomorphism 
is proved in \cite{Ma}. This and results from \cite{HW} implies that the space admits a topological embedding in $\R^n$.
\end{rmk}

\begin{bibdiv}
\begin{biblist}

\bib{AH}{book}{
author={N. Aoki},
author={K. Hiraide},
title={Topological theory of dynamical systems},
publisher={North-Holland},
year={1994}}

% \bib{Bar}{article}{
% author={L. Barreira},
% title={A Non-additive Thermodynamic Formalism and Applications to Dimension Theory of Hyperbolic Dynamical Systems},
% journal={Ergod. Theory and Dyn. Syst.},
% volume={16},
% year={1996}, 
% pages={871--928}}

\bib{Bo70}{article}{
author={R. Bowen}, 
title={Topological entropy and Axiom A}, 
journal={Proc. Sympos. Pure Math.},
volume={14},
year={1970},
pages={23--42}}

\bib{Bo71}{article}{
author={R. Bowen}, 
title={Periodic points and measures for Axiom A diffeomorphisms}, 
journal={Trans. Amer. Math. Soc.},
volume={154},
year={1971}, 
pages={377--397}}

\bib{BoUnEqSt}{article}{
author={R. Bowen},
title={Some systems with unique equilibrium states}, 
journal={Math. Systems Theory},
volume={8},
year={1975}, 
pages={193--202}}

% \bib{Br77}{article}{
% author={M. Brin},
% title={Nonwandering  points  of  Anosov  diffeomorphisms},
% journal={Ast\'erisque},
% volume={49},
% pages={11--18},
% year={1977}}

% \bib{BM81}{article}{
% author={M. Brin},
% author={A. Manning},
% title={Anosov diffeomorphisms with pinched spectrum},
% journal={Lecture Notes in Math, Springer Berlin-New York},
% volume={898},
% year={1981},
% pages={48--53}}

\bib{BS}{book}{
author={M. Brin},
author={G. Stuck},
title={Introduction to Dynamical Systems},
publisher={Cambridge University Press},
year={2003}}
\bib{CoRe}{article}{
author={E. M. Coven},
author={W. L. Reddy},
title={Positively expansive maps of compact manifolds},
journal={Springer, Lecture Notes in Math.}, 
volume={819},
year={1980},
pages={96--110}}

\bib{DGS}{book}{
title={Ergodic Theory on Compact Spaces},
author={M. Denker}, 
author={C. Grillenberger}, 
author={K. Sigmund},
year={1976},
publisher={Springer, Lecture notes in Math.},
volume={527}}

\bib{Dov}{article}{
author={S. A. Dovbysh},
title={Optimal Lyapunov metrics of expansive homeomorphisms},
journal={Izvestiya: Mathematics},
volume={70},
year={2006},
pages={883--929}}

\bib{Falconer}{book}{
author={K. Falconer},
title={Fractal Geometry},
publisher={John Wiley \& Sons},
year={1990}}

\bib{Fa}{article}{
author={A. Fathi},
title={Expansivity, hyperbolicity and Hausdorff dimension},
journal={Commun. Math. Phys.},
volume={126},
year={1989},
pages={249--262}}

\bib{Fisher}{article}{
author={T. Fisher}, 
title={Hyperbolic sets with non-empty interior}, 
journal={Disc. Cont. Dynam. Sys}, 
volume={15},
year={2006}, 
pages={433--446}}

% \bib{Franks70}{article}{
% author={J. Franks},
% title={Anosov diffeomorphisms}, 
% journal={Proc. Symp. pure Math.},
% volume={14},
% year={1970},
% pages={61--93}}

\bib{FR}{article}{
author={J. Franks},
author={C. Robinson},
title={A quasi-Anosov diffeomorphism that is not Anosov},
journal={Trans. of the AMS},
volume={223},
year={1976},
pages={267--278}}

\bib{Fried83}{article}{
author={D. Fried}, 
title={M\'etriques naturelles sur les espaces de Smale}, 
journal={C. R. Acad. Sci. Paris S\'er. I Math.},
volume={297},
year={1983}, 
pages={77--79}}

\bib{Fried}{article}{
author={D. Fried},
title={Finitely presented dynamical systems},
journal={Ergod. Th. Dynam. Sys.},
year={1987},
volume={7},
pages={489--507}}

\bib{FKM}{article}{
title={Fractal metrics of Ruelle expanding maps and expanding ratios},
author={C. Fujita}, 
author={H. Kato}, 
author={M. Matsumoto},
journal={Topology and its applications},
volume={157},
year={2010},
pages={615--628}}

\bib{Ham}{article}{
title={A new description of the Bowen-Margulis measure},
author={U. Hamenstadt},
journal={Ergod. Th. \& Dynam. Sys.},
year={1989}, 
volume={9}, 
pages={455--464}}

% 
% \bib{HS}{article}{
% author={B. Hasselblatt},
% author={J. Schmeling},
% title={Dimension Product Structure of Hyperbolic Sets},
% journal={Electonic Research Announcements of the AMS},
% volume={10}, 
% pages={88--96},
% year={2004}}

\bib{HaKa}{book}{
author={B. Hasselblatt},
author={A. Katok},
title={Introduction to the Modern Theory of Dynamical Systems},
publisher={Cambridge University Press},
year={1995}}

\bib{Hi}{article}{
author={K. Hiraide},
title={Expansive homeomorphisms of compact surfaces are pseudo-Anosov},
journal={Osaka J. Math.}, 
volume={27},
year={1990}, 
pages={117--162}}

\bib{HW}{book}{
author={W. Hurewicz},
author={H. Wallman},
title={Dimension Theory},
publisher={Princeton Univ. Press}, 
year={1948}}

\bib{KR}{article}{
author={H. Keynes},
author={J. Robertson},
title={Generators for topological entropy and expansivity},
journal={Mathematical systems theory},
volume={3},
year={1969},
pages={51--59}}

\bib{Lew}{article}{
author={J. Lewowicz},
year={1980},
title={Lyapunov Functions and Topological Stability},
journal={J. Diff. Eq.},
volume={38},
pages={192--209}}

\bib{L}{article}{
author={J. Lewowicz},
title={Expansive homeomorphisms of surfaces},
journal={Bol. Soc. Bras. Mat.}, 
volume={20}, 
pages={113--133}, 
year={1989}}

\bib{Ma}{article}{
author={R. Mañé},
title={Expansive homeomorphisms and topological dimension},
journal={Trans. of the AMS}, 
volume={252}, 
pages={313--319}, 
year={1979}}

\bib{Mane}{book}{
author={R. Mañé},
title={Ergodic theory and differentiable dynamics},
publisher={Springer},
year={1983}}

% \bib{Manning74}{article}{
% author={A. Manning},
% title={There are no new Anosov diffeomorphisms on tori}, 
% journal={Am. J. Math.}, 
% volume={96}, 
% year={1974}, 
% pages={422-429}}

\bib{Marg}{article}{
author={G. A. Margulis},
title={Certain measures associated with U-flows on compact manifolds},
journal={Functional Analysis and Its Applications},
year={1970}, 
volume={4}, 
pages={55--67}}

\bib{Massera49}{article}{
author={J. L. Massera},
title={On Liapunoff's Conditions of Stability},
journal={Ann. of Math.},
volume={50},
number={3},
pages={705--721},
year={1949}}

\bib{Nadler}{book}{
author={S. Nadler Jr.}, 
title={Continuum Theory}, 
series={Pure and Applied Mathematics}, 
volume={158}, 
publisher={Marcel Dekker, New York},
year={1992}} 

% \bib{Newhouse}{article}{
% author={S. Newhouse}, 
% title={On codimension one Anosov diffeomorphisms}, 
% journal={American Journal of Math.}, 
% volume={92},
% year={1970},
% pages={761--770}} 

\bib{PW1}{article}{
author={Y. Pesin}, 
author={H. Weiss},
title={On the Dimension of Deterministic and Random Cantor-like Sets, Symbolic Dynamics, and the Eckmann-Ruelle Conjecture},
journal={Commun. Math. Phys.},
volume={182}, 
pages={105--153},
year={1996}}

\bib{Pesin}{book}{
author={Y. Pesin},
title={Dimension Theory in Dynamical Systems},
year={1997},
publisher={University of Chicago}}

\bib{Reddy82}{article}{
author={W. L. Reddy},
title={Expanding maps on compact metric spaces},
journal={Topology and its applications},
year={1982},
volume={13},
pages={327--334}}
\bib{Reddy83}{article}{
author={W. L. Reddy},
title={Expansive canonical coordinates are hyperbolic},
journal={Topology and its applications},
year={1983},
volume={15},
pages={205--210}}
\bib{Ruelle}{book}{
author={D. Ruelle},
title={Thermodynamic Formalism}, 
edition={2},
publisher={Cambridge University Press},
year={2004}}

\bib{Sakai95}{article}{
author={K. Sakai},
title={Hyperbolic metrics of expansive homeomorphisms},
year={1995},
journal={Topology and its applications},
number={63},
pages={263--266}}

\bib{Sakai2001}{article}{
author={K. Sakai},
title={Shadowing properties of L-hyperbolic homeomorphisms},
journal={Topology and its Applications},
volume={112},
year={2001},
pages={229--243}}

\bib{Sakai03}{article}{
author={K. Sakai},
year={2003},
title={Various shadowing properties for positively expansive maps},
journal={Topology Appl.},
volume={131},
pages={15--31}}

% \bib{Simmons}{book}{
% author={S. C. Simmons},
% title={Topological Properties of Invariant Sets for Anosov Maps with Holes},
% publisher={Thesis, Brigham Young University},
% year={2011}}

\bib{Sinai}{article}{
author={Ya. G. Sinai},
title={Markov partitions and C-diffeomorphisms},
journal={Functional Analysis and Its Applications},
year={1968}, 
volume={2}, 
pages={61--82}}

\bib{Smale}{article}{
author={S. Smale},
title={Differentiable dynamical systems},
journal={Bull. Amer. Math. Soc.},
number={73},
year={1967},
pages={747-817}}

\bib{Wa78}{article}{
author={P. Walters},
title={On the pseudo orbit tracing property and its relationship to stability},
journal={Lect. Not. in Math. Springer},
volume={668}, 
year={1978}, 
pages={231--244}}

\bib{Weiss}{article}{
author={B. Weiss}, 
title={Intrinsically ergodic systems}, 
journal={Bull. Amer. Math. Soc.},
volume={76},
year={1970}, 
pages={1226--1269}}

\end{biblist}
\end{bibdiv}

\end{document}